\newcommand{\zz}{{\Bbb Z}}
\newcommand{\nn}{{\Bbb N}}
\newcommand{\cc}{{\Bbb C}}
\newcommand{\rr}{{\Bbb R}}
\newcommand{\pp}{{\Bbb P}}
\newcommand{\aaa}{{\Bbb A}}
\newcommand{\ff}{{\Bbb F}}
\newcommand{\op}[1]{\operatorname{#1}}
\newcommand{\ddim}{\operatorname{dim}}
\newcommand{\ddeg}{\operatorname{deg}}
\newcommand{\kker}{\operatorname{Ker}}
\newcommand{\spec}{\operatorname{Spec}}
\newcommand{\Hom}{\operatorname{Hom}}
\newcommand{\End}{\op{End}}
\newcommand{\ffi}{\varphi}
\newcommand{\eps}{\varepsilon}
\newcommand{\la}{\langle}
\newcommand{\ra}{\rangle}
\newcommand{\row}{\rightarrow}
\newcommand{\low}{\leftarrow}
\newcommand{\lrow}{\longrightarrow}
\renewcommand{\leq}{\leqslant}
\renewcommand{\geq}{\geqslant}
\newcommand{\nichego}[1]{}
\newcommand{\ov}[1]{\overline{#1}}
\newcommand{\un}[1]{\underline{#1}}
\newcommand{\wt}[1]{\widetilde{#1}}
\newcommand{\smk}{{\mathbf{Sm}}_k}
\newcommand{\laz}{{\Bbb L}}
\newcommand{\cm}{{\cal M}}
\newcommand{\Ch}{\operatorname{Ch}}
\newcommand{\CH}{\operatorname{CH}}
\newcommand{\dmeffkF}[1]{\op{DM}^{-}_{eff}(k,F)}
\newcommand{\Qed}{\hfill$\square$\smallskip}
\newcommand{\Red}{\hfill$\triangle$\smallskip}
\newenvironment{proof}{\noindent{\it Proof}:}{\vskip 5mm}
\newtheorem{proposition}{Proposition}[section]{\bf}{\it}
\newtheorem{theorem}[proposition]{Theorem}{\bf}{\it}
\newtheorem{lemma}[proposition]{Lemma}{\bf}{\it}
{\bf}{\it}
\newtheorem{definition}[proposition]{Definition}{\bf}{\rm}
\newtheorem{conj}[proposition]{Conjecture}{\bf}{\it}
\newtheorem{observ}[proposition]{Observation}{\bf}{\it}
\newtheorem{example}[proposition]{Example}{\bf}{\rm}
\newtheorem{remark}[proposition]{Remark}{\bf}{\rm}
{\bf}{\rm}
\newtheorem{claim}[proposition]{Claim}{\bf}{\it}
{\bf}{\it}
{\bf}{\it}
{\bf}{\it}
\newtheorem{corollary}[proposition]{Corollary}{\bf}{\it}
{\bf}{\it}
\begin{document}

\title{Isotropic and numerical equivalence for Chow groups and Morava K-theories
\renewcommand{\thefootnote}{\fnsymbol{footnote}} 
\footnotetext{MSC2010 classification: 14C15, 14C25, 19E15, 14F42;\ Keywords: motive, Chow groups, numerical equivalence of cycles, Morava K-theory, Balmer spectrum}     
\renewcommand{\thefootnote}{\arabic{footnote}}
}
\author{Alexander Vishik}
\date{}
\maketitle

\begin{abstract}
In this paper we prove the conjecture claiming that, over a flexible field, {\it isotropic Chow groups}
coincide with {\it numerical Chow groups} (with $\ff_p$-coefficients).
This shows that Isotropic Chow motives coincide with Numerical Chow motives. In particular, homs between such objects are finite groups and
$\otimes$ has no zero-divisors.
It provides a large supply of new points for the Balmer spectrum of the Voevodsky motivic category. 
We also prove the Morava K-theory version of the above result, which permits to construct plenty of new points for the Balmer spectrum of the Morel-Voevodsky ${\Bbb{A}}^1$-stable homotopy category.
This substantially improves our understanding of the mentioned spectra whose description is a major open problem.
\end{abstract}

\section{Introduction}

The idea of {\it isotropic realisation} is to provide a
local handy version for an algebro-geometric object, a version whose complexity will be similar to that of a topological object (and so, much simpler). In the first approximation, such local versions should be parametrized by {\it points} of the algebro-geometric world, that is, by all possible
extensions of the base field. A closer look though reveals that
a prime number $p$ should be chosen and only an equivalence class of an extension under certain $p$-equivalence relation matters.
In the motivic case, the idea is to annihilate the motives of
$p$-anisotropic varieties (that is, varieties which have no closed points of degree prime to $p$). This idea appeared first in the work of Bachmann \cite{BQ}, who considered the $\otimes-\triangle$-subcategory generated by motives of quadrics and applied this method successfully to the study of the Picard group of the Voevodsky category. With the latter purpose in mind, the {\it motivic category of a field extension} $DM(E/k;\ff_p)$ was introduced in \cite[Section 4]{PicQ} and studied extensively in \cite{Iso}. The trivial extension $k/k$ case of it
is the {\it isotropic motivic category} $DM(k/k;\ff_p)$ - \cite[Definition 2.4]{Iso}. It is the Verdier localisation of the ``global'' Voevodsky category
$DM(k;\ff_p)$ with respect to the localising subcategory generated by
motives of $p$-anisotropic varieties. 
Combining the global to local localisation functor with the restriction via
field extension we get natural functors from $DM(k)$ to 
$DM(E/E;\ff_p)$, for various field extensions $E/k$. 
The next important observation is that, for the target isotropic
category to be ``handy'', the field $E$ should be chosen carefully. Fortunately, there is a large class of {\it flexible fields} - \cite[Definition 1.1]{Iso}, which are fit for the role. These are purely transcendental extensions of an infinite transcendence degree of some other fields. Any field $E$ can
be embedded into its' {\it flexible closure} $\wt{E}:=E(\pp^{\infty})$ and the respective motivic restriction functor is conservative. We get the family of {\it isotropic realisations}:
$$
\psi_{p,E}:DM(k)\row DM(\wt{E}/\wt{E};\ff_p),
$$
where $p$ is prime and $E/k$ runs over all
extensions of the base field. These functors are tensor-triangulated and so, the kernels
${\frak a}_{p,E}:=\kker(\psi_{p,E})$ are $\otimes-\triangle$-ideals of the Voevodsky category. It appears that some of these
realisations will be equivalent in a sense. Namely, one can introduce an equivalence relation $\stackrel{p}{\sim}$ on the 
set of extensions - see Section \ref{sect-isotr-realis}, such
that the respective kernels ${\frak a}_{p,E}$ coincide if and only if the extensions are equivalent - 
see Theorem \ref{Bal-sp-Voev}.

As in the global case, the isotropic motivic category 
$DM(k/k;\ff_p)$ possesses the natural weight structure in the sense of
Bondarko \cite{Bon} whose heart is the category of 
{\it isotropic Chow motives} $Chow(k/k;\ff_p)$ - see Proposition
\ref{ws-iso-mot}. This consists of direct
summands of the isotropic motives of smooth projective varieties.
Many questions about the isotropic motivic category can be
reduced to questions about the heart. In this article we will be
able to answer various such crucial questions. 

The Homs in the category of isotropic Chow motives are given by
{\it isotropic Chow groups} $\Ch_{iso}^*$. The latter is the quotient of the usual 
Chow groups modulo {\it anisotropic classes}, that is, elements
coming from $p$-anisotropic varieties. 
On $\Ch^*=\CH^*/p$ of a smooth projective variety $X$ there
is the {\it degree pairing} $\Ch^*(X)\times\Ch^*(X)\row\ff_p$.
Moding out the kernel of it, we get the {\it numerical} version
$\Ch^*_{Num}$. By obvious reasons, anisotropic classes are numerically trivial and so, we get the natural surjection
$\Ch^*_{iso}\twoheadrightarrow\Ch^*_{Num}$ of oriented cohomology theories. Many important properties of the isotropic motivic
categories and isotropic realisations depend on the following
conjecture - \cite[Conjecture 4.7]{Iso}.

\begin{conj}
 \label{conj-Ch-Main}
 If $k$ is flexible, then $\Ch^*_{iso}=\Ch^*_{Num}$.
\end{conj}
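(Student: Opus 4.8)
The reverse inclusion is immediate: an anisotropic $0$-cycle has degree divisible by $p$, so anisotropic classes are numerically trivial and the surjection $\Ch^*_{iso}\twoheadrightarrow\Ch^*_{Num}$ noted above exists. The plan is to prove the missing direction --- over a flexible field every \emph{numerically trivial} class on a smooth projective $X$ is \emph{anisotropic} --- and I would first reformulate it categorically. In the heart $Chow(k/k;\ff_p)$ of the natural weight structure on $DM(k/k;\ff_p)$ (Proposition \ref{ws-iso-mot}) the motive $M(X)$ is still self-dual up to the twist $(-n)[-2n]$, $n=\ddim X$ (the localisation $DM(k;\ff_p)\to DM(k/k;\ff_p)$ being tensor-triangulated), and the degree pairing $\Ch^d_{iso}(X)\times\Ch^{n-d}_{iso}(X)\to\ff_p$ is the composition of morphisms $M(\spec k)\to M(X)(d)[2d]\to M(\spec k)$ landing in $\End(M(\spec k))=\Ch^0_{iso}(\spec k)=\ff_p$ (this last identity being elementary, every $p$-anisotropic finite scheme having degree divisible by $p$). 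The radical of this pairing is precisely the image of the numerically trivial classes, so Conjecture \ref{conj-Ch-Main} is equivalent to its non-degeneracy, i.e.\ to the statement that every nonzero class in $\Ch^d_{iso}(X)$ is a split monomorphism $M(\spec k)\hookrightarrow M(X)(d)[2d]$; equivalently, that $Chow(k/k;\ff_p)$ is semisimple and agrees with the category of numerical Chow motives.

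To prove non-degeneracy one must attach to a numerically trivial $\alpha\in\Ch^d(X)$ a $p$-anisotropic smooth projective $Y$ and a correspondence from $Y$ to $X$ through which $\alpha$ factors; by the projection formula this is the same as exhibiting $\alpha$ in the image of the pushforward $\Ch^*(Y\times X)\to\Ch^*(X)$ for some $p$-anisotropic $Y$. That this is delicate is visible already for $[P]-[O]\in\Ch_0(E)$, $E$ an elliptic curve over flexible $k$ and $P\in E(k)$: no curve and no finite cover of $E$ can serve as $Y$, since a $p$-anisotropic $Y$ makes every pushforward of a cycle supported over rational points divisible by $p$, whereas $[P]-[O]$ is not; so the witnessing variety must be genuinely higher-dimensional and must meet $\alpha$ through rational equivalence rather than through an honest multisection. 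The construction I would pursue exploits the infinite transcendence degree of $k$: from the data of $\alpha$ one builds a sufficiently generic auxiliary variety --- a norm/splitting-type variety, or a suitably twisted bundle over a product involving $X$, with twisting parameters taken among the independent transcendentals of $k$ --- arranged so that its positive-dimensional fibres over $X$ supply the rational equivalences that absorb the $p$-divisibility of anisotropic pushforwards, and so that numerical triviality of $\alpha$ forces the auxiliary variety to have no closed point of degree prime to $p$ (this last to be verified by a corestriction/transfer argument showing the relevant symbol stays non-split over every residue field of degree prime to $p$). The calculus of rational correspondences $\rc$, designed to handle cycles modulo degree-$p$ phenomena, should provide the right bookkeeping, and an induction on $\ddim X$ via the localisation (coniveau) sequence should funnel the general case into a universal one.

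The hard part will be exactly this construction together with the verification of $p$-anisotropy. The localisation/induction step only repackages the problem; the real content is to make the universal construction explicit and to show that numerical triviality of $\alpha$ is precisely the obstruction to the auxiliary variety acquiring a closed point of degree prime to $p$. Flexibility of $k$ must enter here in an essential way --- both in guaranteeing the existence of the necessary generic parameters, and in keeping the anisotropy stable under the base changes forced by the induction.
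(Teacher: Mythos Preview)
Your proposal is not a proof; it is a restatement of the problem followed by a wish. You correctly identify that one must produce, for each numerically trivial $\alpha$, a $p$-anisotropic witness $Y$ through which $\alpha$ factors, and you then describe this witness as ``a sufficiently generic auxiliary variety --- a norm/splitting-type variety, or a suitably twisted bundle \ldots'' without ever saying what it is. The closing paragraph concedes this: ``The hard part will be exactly this construction together with the verification of $p$-anisotropy.'' That \emph{is} the whole conjecture. The categorical reformulation in terms of semisimplicity of $Chow(k/k;\ff_p)$ is correct but contributes nothing toward a proof; in the paper semisimplicity is a \emph{consequence} (Corollary~\ref{is-Chow-num-Chow}, Lemma~\ref{Cho-num-sem-sim}), not a method. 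Your elliptic-curve discussion is also off: a pushforward from a $p$-anisotropic $Y$ can certainly hit classes that are not $p$-divisible in $\CH^*$ --- anisotropy constrains the degree over the base, not the class modulo $p$ on $X$ --- so your stated obstruction to a low-dimensional witness is not the real one.

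More importantly, the strategy you gesture at --- build the witness directly, with Steenrod/transfer bookkeeping and induction on dimension --- is precisely the line pursued in \cite{Iso} and \cite{IN}, which after considerable effort reached only $\dim X\le 2p$ and cycles of dimension $<p$. The paper's proof abandons that route entirely. The key new idea is to work not with $\Ch^*$ alone but with the free theory $Q^*=BP^*/J$ (here $J=I(\infty)$, so $Q^*=\Ch^*$) and to lift the class to $\Omega^*$ through the multiplicative projector $\rho$ defining $BP^*$; this forces \emph{all} Landweber--Novikov operations on the lift to be $Q$-numerically trivial (Proposition~\ref{killing-SLN-Gen}), replacing the delicate surgery of the earlier papers by a one-line projector argument. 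With Chern classes of $T_Y$ thus under control, one embeds into a Grassmannian to make the class a regular embedding whose normal bundle is $Q$-equivalent to a bundle pulled back from the ambient variety, passes to a flag variety to split it into line bundles, and then applies deformation to the normal cone --- used here in the unusual role of turning the class into an explicit product of first Chern classes. One is reduced to showing that a numerically trivial \emph{complete intersection} is anisotropic, and \emph{this} is where flexibility enters cleanly: the generic member of the linear system is anisotropic over a purely transcendental extension, and flexibility transports that back to $k$ (Proposition~\ref{anis-compl-int}). None of these steps --- the $BP$-projector, the Grassmannian/flag reduction, the deformation-to-normal-cone trick, or the complete-intersection endgame --- appears in your outline.
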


In \cite{Iso}, this conjecture was proven for varieties of dimension $\leq 5$, for cycles of dimension $\leq 2$, and for divisors. In \cite{IN}, it was extended to varieties of dimension
$\leq 2p$ and cycles of dimension $<p$ (as well as to few other cases). 
Finally, we can prove the entire Conjecture - see Theorem \ref{conj}.

\begin{theorem}
 \label{conj-thm}
 The conjecture \ref{conj-Ch-Main} is true.
\end{theorem}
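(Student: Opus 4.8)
The plan is to establish $\Ch^*_{iso}=\Ch^*_{Num}$ by showing that every numerically trivial class in $\Ch^*(X)$, for $X$ smooth projective over a flexible field, is anisotropic, i.e. comes from a $p$-anisotropic variety mapping to $X$. The reverse containment (anisotropic $\Rightarrow$ numerically trivial) is already noted in the excerpt, so only this direction is at stake. The natural strategy is to induct on $\dim X$ (and, within fixed dimension, on the codimension of the cycle), using the cases of small dimension already proven in \cite{Iso} and \cite{IN} as the base, and to reduce a general $X$ to a situation controlled by a ``small'' variety via blow-ups, projections, and the action of Steenrod operations on $\Ch^*$.

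First I would fix a class $\alpha\in\Ch^j(X)$ with $\deg(\alpha\cdot\beta)=0$ for all $\beta\in\Ch^{\dim X-j}(X)$, and aim to express $\alpha$ as a pushforward from a variety of smaller dimension; if this can always be arranged, induction finishes the job since over a flexible field every proper variety of dimension $<\dim X$ already has $\Ch^*_{iso}=\Ch^*_{Num}$, and numerical triviality is preserved under proper pushforward along the projection formula. The key tool making such a reduction possible should be a detailed analysis of the \emph{isotropic} versions of Chow groups as an \emph{oriented cohomology theory} in the sense of Levine--Morel: the isotropic Chow theory $\Ch^*_{iso}$, being a quotient of $\Ch^*$ by an ideal of ``anisotropic'' classes, inherits pushforwards, pullbacks, Chern classes, and — crucially — a theory of Steenrod operations. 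One then uses that, modulo anisotropic classes, the Steenrod operations together with the Chern classes of the tangent bundle (Wu-type formulas) express the ``diagonal'' structure, and numerically trivial classes must lie in the kernel of enough of these operations to be pushed down from a proper closed subvariety or a blow-up center.

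The heart of the argument — and the step I expect to be the main obstacle — is handling the ``top-dimensional'' obstruction, i.e. classes $\alpha$ of codimension close to $\dim X$ where no obvious geometric reduction is available; here one must show that a $0$-cycle (or low-dimensional cycle) of degree $0$ mod $p$ which survives in $\Ch^*_{iso}$ cannot exist, which is equivalent to a statement about $p$-incompressibility / the behaviour of the isotropic point, and this is exactly where flexibility of $k$ must be used in an essential way (to guarantee enough ``generic'' closed points of degree prime to $p$, or equivalently to rule out the relevant anisotropic obstructions). I would attack this by combining the Rost nilpotence / nilpotence-type results for the isotropic category with a dimension-count on the space of cycles, pushing the known low-dimensional cases of the conjecture to feed the inductive step; morally, one bounds the ``defect'' $\ker(\Ch^*_{iso}\twoheadrightarrow\Ch^*_{Num})$ by something that is simultaneously a pushforward from lower dimension (hence zero by induction) and annihilated by numerical equivalence, forcing it to vanish.

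Finally, I would assemble the pieces: verify that the oriented-cohomology and Steenrod formalism descends to $\Ch^*_{iso}$ without loss, run the double induction on dimension and codimension with \cite{Iso}, \cite{IN} supplying the base cases, and check at each inductive step that the geometric reduction (blow-up, projection to a hyperplane section, or norm argument) transports numerical triviality correctly via the projection formula. The output is Theorem \ref{conj-thm}, equivalently Conjecture \ref{conj-Ch-Main}, and as an immediate consequence one gets that $Chow(k/k;\ff_p)=Chow_{Num}(k/k;\ff_p)$, that Homs between isotropic Chow motives are finite, and that $\otimes$ has no zero-divisors, yielding the advertised new points of the Balmer spectrum.
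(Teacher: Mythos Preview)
Your proposal is not a proof; it is a strategy outline in which the decisive step is explicitly flagged as an unresolved ``main obstacle.'' The inductive scheme you describe --- reduce to smaller dimension via blow-ups, projections, Steenrod operations, and feed in the known low-dimensional cases from \cite{Iso}, \cite{IN} --- is precisely the surgery-type approach that those papers already pushed as far as it would go (dimension $\leq 2p$, cycles of dimension $<p$, etc.). Nothing in your sketch explains why the same machinery should now succeed in all dimensions; the phrases ``must lie in the kernel of enough of these operations,'' ``dimension-count on the space of cycles,'' and ``Rost-nilpotence-type results'' are placeholders, not arguments. In particular, there is no mechanism offered for the crucial step of forcing the Chern classes of the normal/tangent bundle of the cycle to be simultaneously numerically trivial, which is exactly where the earlier approach stalled.

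The paper's proof follows a completely different route and involves no induction on $\dim X$. The key new idea is to pass through the $BP$-theory: one lifts the numerically trivial class to $\Omega^*(X)$ \emph{via the multiplicative projector $\rho$ defining $BP^*$}, and a short argument (Proposition~\ref{killing-SLN-Gen}, using that $BP$-Landweber--Novikov operations respect $Q$-numerical equivalence, Proposition~\ref{L-N-Pm-Num-Gen}) shows that this particular lift has \emph{all} of its Landweber--Novikov operations $Q$-numerically trivial. This single step replaces the entire delicate Steenrod/Adams surgery of \cite{Iso}, \cite{IN}. From there one does not reduce dimension but instead arranges, via a Grassmannian embedding and a flag variety, that the class is a regular embedding whose normal bundle has $Q$-roots defined on the ambient $X$; then the deformation to the normal cone is used in a novel way to rewrite the class as a product of first Chern classes, i.e.\ a complete intersection. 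A direct geometric argument (Proposition~\ref{anis-compl-int}) shows that a numerically trivial complete intersection over a flexible field is anisotropic. None of these ingredients --- the $BP$-projector trick, the reduction to a complete intersection, or the complete-intersection endgame --- appears in your plan.
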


An immediate consequence of it is that 
{\it isotropic Chow groups} are finite groups - see 
Corollary \ref{fin-is-Chow-gr}. Another one is that
the category of {\it isotropic Chow motives} is equivalent to the category of {\it numerical Chow motives} - see Corollary \ref{is-Chow-num-Chow}. The latter tensor additive category is semi-simple and has no $\otimes$-zero-divisors. The weight complex functor considerations then imply that the zero ideal $(0)$ in
the isotropic motivic category of a flexible field is prime $\otimes-\triangle$-ideal. Hence, the kernels ${\frak a}_{p,E}$
of our isotropic realisations $\psi_{p,E}$ provide points
of the Balmer spectrum of the Voevodsky category - see
Theorem \ref{a-p-E-prime}: 

\begin{theorem}
 \label{Bal-sp-Voev}
 The
 $\otimes-\triangle$-ideal ${\frak a}_{p,E}$
 is prime and so, defines a point of the Balmer spectrum $\op{Spc}(DM(k)^c)$ of the Voevodsky category. 
 Two such points ${\frak a}_{p,E}$ and ${\frak a}_{q,F}$ are equal if
 and only if $p=q$ and $E/k\stackrel{p}{\sim}F/k$.
\end{theorem}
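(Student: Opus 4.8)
The plan is to deduce Theorem~\ref{Bal-sp-Voev} from Theorem~\ref{conj-thm} together with the weight-structure formalism on the isotropic motivic category. First I would recall that $\psi_{p,E}$ is a tensor-triangulated functor, so ${\frak a}_{p,E}=\kker(\psi_{p,E})$ is automatically a $\otimes$-$\triangle$-ideal; the content is that it is \emph{prime}. Equivalently, one must show that the essential image of $\psi_{p,E}$, or rather the target category $DM(\wt E/\wt E;\ff_p)$ localised appropriately, has the property that the zero ideal is prime --- i.e.\ that the tensor product of two nonzero compact objects is nonzero. So the first reduction is: it suffices to prove that in $DM(\wt E/\wt E;\ff_p)$ (for $\wt E$ flexible) the zero $\otimes$-$\triangle$-ideal is prime.

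For this I would invoke the weight structure of Proposition~\ref{ws-iso-mot} whose heart is $Chow(\wt E/\wt E;\ff_p)$, and the associated weight complex functor $t\colon DM(\wt E/\wt E;\ff_p)^c\row K^b(Chow(\wt E/\wt E;\ff_p))$, which is conservative and tensor-triangulated. A nonzero compact object $M$ has a nonzero weight complex $t(M)$, hence some nonzero cohomology object $H^i(t(M))$ which is a nonzero isotropic Chow motive. By Theorem~\ref{conj-thm} and Corollary~\ref{is-Chow-num-Chow}, $Chow(\wt E/\wt E;\ff_p)$ coincides with the category of numerical Chow motives with $\ff_p$-coefficients; this category is semisimple and, crucially, has no $\otimes$-zero-divisors (the Homs being the finite groups $\Ch^*_{Num}$, and the tensor product of two nonzero numerical motives being nonzero since the degree pairing on a product is the tensor product of the pairings, hence nondegenerate). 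Therefore for nonzero $M,N$ the object $H^i(t(M))\otimes H^j(t(N))$ is a nonzero summand of some cohomology of $t(M\otimes N)=t(M)\otimes t(N)$, so $t(M\otimes N)\neq 0$, whence $M\otimes N\neq 0$. This shows the zero ideal is prime and hence each ${\frak a}_{p,E}$ is a point of $\op{Spc}(DM(k)^c)$.

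For the second assertion --- that ${\frak a}_{p,E}={\frak a}_{q,F}$ iff $p=q$ and $E/k\stackrel{p}{\sim}F/k$ --- the ``if'' direction should follow from the construction of the equivalence relation $\stackrel{p}{\sim}$ in Section~\ref{sect-isotr-realis}, which is designed precisely so that $p$-equivalent extensions give the same isotropic realisation (so this is essentially a restatement of results already available there). For the ``only if'' direction I would argue by separating the primes first: the characteristic of the coefficient field $\ff_p$ can be read off from the ideal, e.g.\ by testing ${\frak a}_{p,E}$ against motives of varieties of prescribed degree, or from the fact that $\op{id}_{\un\zz/\ell}$ behaves differently under $\psi_{p,E}$ for $\ell\neq p$. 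Then, fixing $p$, if $E/k$ and $F/k$ are not $p$-equivalent, one exhibits a smooth projective variety (e.g.\ a quadric or a norm variety, or more generally a variety detecting the relevant $p$-equivalence invariant) whose isotropic motive vanishes over one of $\wt E,\wt F$ but not the other, so the kernels differ.

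The main obstacle I expect is the final separation step: producing, for a given pair of $p$-inequivalent extensions, an explicit compact object distinguishing ${\frak a}_{p,E}$ from ${\frak a}_{p,F}$. This amounts to understanding the $p$-equivalence relation well enough to find witnessing varieties, and is likely where one must cite or adapt the structural results on $DM(E/k;\ff_p)$ from \cite{PicQ} and \cite{Iso}; the primeness part, by contrast, is a fairly formal consequence of Theorem~\ref{conj-thm} via the weight complex functor. A secondary technical point is checking that the weight complex functor on the isotropic category is genuinely conservative and monoidal with the expected compatibilities --- this should be handled by Bondarko's formalism \cite{Bon} applied to the weight structure of Proposition~\ref{ws-iso-mot}, together with the fact that $\otimes$ is weight-exact here.
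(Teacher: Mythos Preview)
Your approach to primeness matches the paper's: weight structure on the isotropic category (Proposition~\ref{ws-iso-mot}), heart identified with numerical Chow motives via Theorem~\ref{conj-thm} and Corollary~\ref{is-Chow-num-Chow}, then conservativity plus K\"unneth for a weight cohomology functor. The paper makes your informal ``$H^i(t(M))\otimes H^j(t(N))$ is a nonzero summand of some cohomology'' precise by first establishing semi-simplicity of the heart (Lemma~\ref{Cho-num-sem-sim}), which forces every object to admit a weight filtration with zero differentials (Propositions~\ref{wt-zero-dif}, \ref{semi-sim-wc}); this yields a well-defined, conservative, monoidal total weight cohomology $H^{Tot}_w$ (Corollary~\ref{wt-coh}, Theorem~\ref{Kunneth}), and the absence of $\otimes$-zero-divisors in the heart finishes the argument.

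Your expectations about the second assertion are inverted relative to the actual proof. The ``only if'' direction, which you flag as the main obstacle, is immediate from the very definition of $\stackrel{p}{\sim}$ in terms of finitely generated sub-extensions $E_\alpha=k(Q_\alpha)$: if $E/k\stackrel{p}{\not\sim}F/k$ then (say) some $Q_\alpha$ is $p$-anisotropic over $F$, so $M(Q_\alpha)\in{\frak a}_{p,F}$, while over $E$ a Tate summand splits off and $M(Q_\alpha)\notin{\frak a}_{p,E}$. No quadrics or norm varieties are needed --- the smooth models of the sub-extensions themselves are the witnesses. Conversely, the ``if'' direction is not a mere restatement: given compact $U\in{\frak a}_{p,E}$, the paper descends the finitely many anisotropic varieties and maps witnessing its vanishing to a finitely generated level $\wt k(Q_\alpha)$, observes that anisotropy persists over $\wt F(Q_\alpha)$ by the $p$-equivalence, and then invokes \cite[Prop.~2.9]{Iso} together with the identification of local motivic categories $DM(\wt F(Q_\alpha)/\wt F;\ff_p)\simeq DM(\wt F/\wt F;\ff_p)$ (since $Q_\alpha|_{\wt F}$ is $p$-isotropic) to conclude $U$ vanishes over $\wt F$.
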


We get many new points of the Balmer spectrum, complementing the 
``classical'' ones (provided by the topological realisation). 
This substantially improves our understanding of the spectrum.
Balmer and Gallauer managed to completely describe
the spectra of the subcategories of Tate and Artin-Tate motives in the
Voevodsky category in \cite{BG-tri} and \cite{Gal-19} for special fields, but very little was known about $\op{Spc}(DM(k)^c)$ itself, in general. 
Our results show, in particular, that this Balmer spectrum is pretty large. For example, the cardinality of $\op{Spc}(DM(\rr)^c)$ is $2^{2^{\aleph_0}}$ - see Example \ref{exa-C-R}.

A similar techniques can be applied to the study of the Balmer
spectrum of Morel-Voevodsky stable homotopy category $SH(k)^c$. Here one needs
to generalise the notion of {\it anisotropy}. In Section 
\ref{sect-isotr-equiv} below the notion of $A$-anisotropy for any
({\it small}) oriented theory $A^*$ is introduced. A smooth projective variety is $A$-anisotropic, if the push-forward
$\pi_*:A_*(X)\row A_*(\op{Spec}(k))$ to the point is zero. In the
case of $A^*=\Ch^*=\CH^*/p$ it gives the good old $p$-anisotropy. Then, in the same way as for Chow groups, one may introduce the {\it isotropic} version $A^*_{iso}$ of the theory,
with the natural surjection to the numerical one $A^*_{Num}$.
The theories of interest are Morava K-theories
$K(m)^*$ and closely related theories $P(m)^*$ - see Section
\ref{subsect-p-typ-theor}. The latter is the {\it free} theory
(in the sense of Levine-Morel - \cite{LM}) obtained from the
Brown-Peterson theory $BP^*$ by change of coefficients
$P(m)^*:=BP^*\otimes_{BP}P(m)$, where $P(m)=BP/I(m)$ and
$I(m)=(v_0,\ldots,v_{m-1})$ is the invariant ideal of Landweber.
In the case $m=\infty$, we get $P(\infty)^*=K(\infty)^*=\Ch^*$.
Theorems \ref{Main} and \ref{Main-Morava} generalise Theorem 
\ref{conj-thm} to the case of an arbitrary $m$:

\begin{theorem}
 \label{thm-general}
 Let $k$ be flexible. Then, for any $1\leq m\leq\infty$,
 we have: $P(m)^*_{iso}=P(m)^*_{Num}$ and $K(m)^*_{iso}=K(m)^*_{Num}$.
\end{theorem}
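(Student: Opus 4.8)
The plan is to deduce Theorem \ref{thm-general} from the already-proven Chow case (Theorem \ref{conj-thm}) by a descending induction on $m$, using the structure of $P(m)^*$ as a free theory and the canonical maps relating $P(m)^*$ and $P(m+1)^*$. Recall that $P(m+1)^*=P(m)^*/v_m$ and there is a Gysin-type short exact sequence (the Conner--Floyd/Landweber exact sequence for free theories) relating the theories $P(m)^*$ and $P(m+1)^*$ via multiplication by $v_m$. Since $K(m)^*$ is obtained from $P(m)^*$ by inverting $v_m$ (and killing $v_{m+1},v_{m+2},\ldots$), the equality $P(m)^*_{iso}=P(m)^*_{Num}$ for all finite $m$ together with a limit argument will yield the $K(m)^*$ statement; the case $m=\infty$ is precisely Theorem \ref{conj-thm}, which serves as the base of the induction.

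First I would set up the comparison maps carefully: for a smooth projective $X$ over flexible $k$, the free theory formalism gives natural transformations $P(m)^*\to P(m+1)^*$ and $P(m)^*\to \Ch^*$ (reduction of coefficients), compatible with push-forwards to $\op{Spec}(k)$, hence inducing maps on both the isotropic and numerical quotients. The key point is that $A$-anisotropy for $A^*=P(m)^*$ can be controlled: a variety which is $\Ch$-anisotropic ($p$-anisotropic) is automatically $P(m)$-anisotropic for every $m$, because the degree of a zero-cycle is detected already in $\Ch^*$; this gives that the ideal of anisotropic classes for $P(m)^*$ contains the image of the anisotropic classes for $\Ch^*$. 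Conversely one needs that $P(m)$-anisotropy is not strictly weaker in the relevant sense — this is where one uses that $P(m)^*$ has coefficient ring $P(m)=\ff_p[v_m,v_{m+1},\ldots]$ with the degree map $P(m)_*(\op{Spec} k)\to P(m)$ and that modulo the positive-degree part one recovers $\ff_p$, so the numerical quotient $P(m)^*_{Num}$ is built from the $\Ch^*_{Num}$ data plus the action of the $v_i$'s, which are ``generic'' over a flexible field.

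The heart of the argument is then to show $P(m)^*_{iso}\to P(m)^*_{Num}$ is injective. I would proceed by induction, assuming $P(m+1)^*_{iso}=P(m+1)^*_{Num}$, and use the two exact-sequence-type relations: the surjection $P(m)^*\twoheadrightarrow P(m+1)^*$ with kernel generated by $v_m$, and the localisation $P(m)^*[v_m^{-1}]$ approximating $K(m)^*$. Given a class $\alpha\in P(m)^*_{iso}$ that is numerically trivial, its image in $P(m+1)^*_{iso}=P(m+1)^*_{Num}$ is numerically trivial, hence zero by the inductive hypothesis, so $\alpha=v_m\cdot\beta$ for some $\beta$ in a shifted $P(m)^*_{iso}$; one then analyses the numerical triviality of $\beta$ using that multiplication by $v_m$ is injective on the numerical quotient (because $v_m$ is a non-zero-divisor in $P(m)$ and the degree pairing is $P(m)$-bilinear after the free-theory identifications), and iterates/bounds the $v_m$-divisibility using that for a fixed $X$ the groups $P(m)^*(X)$ are finitely generated $P(m)$-modules and cycles live in bounded codimension, so the descending chain of $v_m$-divisibilities terminates. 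This forces $\alpha=0$.

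The main obstacle I anticipate is precisely controlling the isotropic anisotropic-ideal for $P(m)^*$ and showing it is exactly detected by the $\Ch^*$-anisotropic ideal together with the $v_i$-structure — i.e. proving that a $P(m)$-anisotropic variety contributes, after passing to the numerical/isotropic quotient, nothing beyond what $\Ch$ already sees. Equivalently, the subtle step is compatibility of the isotropic localisation with the free-theory structure maps: one must know that the localising subcategory of anisotropic objects behaves well under the (non-$\otimes$-exact in the naive sense) operations of reduction $P(m)\to P(m+1)$ and localisation at $v_m$, so that ``numerically trivial'' and ``isotropically trivial'' can be transported along these maps. Once this compatibility is established — likely via the explicit description of free theories on smooth projective generators and the fact that over a flexible field the relevant $v_i$ are algebraically independent of the geometric data — the reduction to Theorem \ref{conj-thm} and the finiteness/Noetherian bookkeeping should be routine. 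The $K(m)^*$ statement for finite $m$ then follows by inverting $v_m$ and noting that isotropic and numerical quotients commute with this localisation, while $m=\infty$ is the already-proven Chow case.
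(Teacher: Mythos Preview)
There is a genuine gap: the direction of implication between the anisotropy notions is reversed in your proposal. You assert that a $\Ch$-anisotropic (i.e.\ $p$-anisotropic) variety is automatically $P(m)$-anisotropic, but in fact $P(m)$-anisotropy is the \emph{stronger} condition for finite $m$ (cf.\ Proposition~\ref{anis-Pm-Km-IX}): one has $P(m)$-anisotropic $\Rightarrow P(m+1)$-anisotropic $\Rightarrow\cdots\Rightarrow \Ch$-anisotropic, and these implications are strict --- an anisotropic conic $C$ at $p=2$ is $\Ch$-anisotropic but not $P(1)$-anisotropic, since $\pi_*([C])=[\pp^1]$ is a unit times $v_1\neq 0$ in $P(1)$. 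Consequently the anisotropic ideal for $P(m+1)^*$ is strictly larger than the image of that for $P(m)^*$, and the kernel of $P(m)^*_{iso}\to P(m+1)^*_{iso}$ is not $v_m\cdot P(m)^*_{iso}$: it also contains push-forwards from varieties which are $P(m+1)$-anisotropic but $P(m)$-isotropic. Your key step ``the image of $\alpha$ vanishes in $P(m+1)^*_{iso}$, hence $\alpha=v_m\beta$'' therefore fails. In addition, a descending induction with base $m=\infty$ is not well-founded (no finite chain reaches a given finite $m$, and the $P(m)$ do not stabilise to $\Ch$ as $m$ grows), and the short exact sequence you invoke is unavailable for free theories in the algebraic setting: since $P(m)^*(X)=\Omega^*(X)\otimes_{\laz}P(m)$ and $\Omega^*(X)$ is not known to be $\laz$-flat, multiplication by $v_m$ need not be injective.

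The paper's logic runs in the opposite direction: the $P(m)$ statement is proved directly and uniformly (indeed for any $Q^*=BP^*\otimes_{BP}BP/J$ with $J$ a nonzero invariant ideal, Theorem~\ref{Main-Gen}), and the Chow case is the specialisation $m=\infty$. The essential new ingredient is Proposition~\ref{killing-SLN-Gen}, which uses the multiplicative projector defining $BP^*$ inside $\Omega^*_{\zz_{(p)}}$ to lift a $Q$-numerically trivial class to $\Omega^*$ so that \emph{all} Landweber--Novikov operations on the lift remain $Q$-numerically trivial; one then reduces, via a Grassmannian embedding and deformation to the normal cone, to a numerically trivial complete intersection, which is handled by Proposition~\ref{anis-compl-int}. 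Finally, the $K(m)$ case is not the formal localisation of the $P(m)$ case you suggest: it requires the separate Proposition~\ref{Pm-Km-num-triv} to lift a $K(m)$-numerically trivial class to a $P\{m\}$-numerically trivial one before Theorem~\ref{Main} can be applied.
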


This is the main result of the paper, which implies all the rest.
The broadening of view from Chow groups to $P(m)$-theories was crucial in the solution of the Conjecture \ref{conj-Ch-Main}. The right set up here is provided by a theory $Q^*$ which is a quotient of the $BP$-theory by any invariant ideal, and this is the generality in which I prove the Main Theorem \ref{thm-general}. In the proof of the original cases in \cite[Theorem 4.11]{Iso} and 
\cite[Theorem 2.3]{IN}, the key step was to represent our numerically trivial cycle by the class of a regular connected subvariety and then annihilate its' Chern classes numerically. This was achieved by a sophisticated surgery-type techniques involving blow-ups, deformations, as well as Steenrod and Adams operations, and was quite non-trivial. Now it is replaced by the claim that any numerically trivial element of $Q^*$ can be lifted to an element of $\Omega^*$, so that all the Landweber-Novikov operations of it are
$Q$-numerically trivial - see Proposition \ref{killing-SLN-Gen}. The latter statement is proven by a short transparent argument involving the multiplicative projector defining the
$BP$-theory.  It allows to reduce to the case of a  numerically trivial class of a regular embedding $[Y\row X]$, such that the normal bundle of it has the same
$Q$-Chern classes as some bundle of the same dimension coming from $X$. Passing to the flag variety, one
may assume that the class of our normal bundle in $K_0$ is a sum of classes of line bundles, all defined on $X$. Finally, one employs
the defomation to the normal cone in a new unusual role. This famous construction, among other things, also makes
a class into a complete intersection (in a situation like ours). Thus, we may reduce to the case of a numerically trivial complete
intersection, which is straightforward - see Proposition \ref{anis-compl-int}.   

Theorem \ref{thm-general}, in particular, shows that the {\it isotropic category of $K(m)$-Chow motives} $Chow^{K(m)}_{iso}(k)$ coincides with its'
numerical counterpart $Chow^{K(m)}_{Num}(k)$, which is semi-simple and has no $\otimes$-zero-divisors - see Theorems \ref{Chow-is-num-Pm} and \ref{Iso-no-zd}, which plays a key role in the study of isotropic realisations of $SH^c(k)$.

To any prime $p$, any $1\leq m\leq\infty$ and a choice
of a field extension $E/k$ we may assign the 
isotropic realisation
$$
\psi_{(p,m),E}:SH(k)^c\row SH_{(p,m)}(\wt{E}/\wt{E})^c,
$$
where the target is an {\it isotropic stable homotopic category}
(over a flexible field). This is done
in our paper with Du \cite{BsMV}.
This category is obtained from $SH(\wt{E})^c$ by moding out
the $\Sigma^{\infty}_{\pp^1}$-spectra of $K(m)$-anisotropic
varieties as well as objects whose $MGL$-motive is
annihilated by some power of $v_m$ (where we set $v_{\infty}=1$) - see Section \ref{sect-BsMV}.

Let ${\frak a}_{(p,m),E}=\kker(\psi_{(p,m),E})$. 
One may introduce the $K(m)$-equivalence relation $\stackrel{(p,m)}{\sim}$ on the set of extensions - see Section \ref{sect-BsMV}.
Theorem \ref{thm-general} provides the crucial ingredient,
which permits to establish that the zero ideal $(0)$ in a
(flexible) isotropic stable homotopic category is prime, and 
hence, the same holds for ${\frak a}_{(p,m),E}$. This is done
in \cite{BsMV} - see Theorem \ref{SH-points}:

\begin{theorem} {\rm (\cite{BsMV})}
 \label{thm-BsMV}
 \begin{itemize}
  \item[$(1)$] ${\frak a}_{(p,m),E}$ provide points of the Balmer spectrum of Morel-Voevodsky category $SH(k)^c$.
  \item[$(2)$] ${\frak a}_{(p,m),E}={\frak a}_{(q,n),F}$ if and only if $p=q$, $m=n$ and $E/k\stackrel{(p,m)}{\sim}F/k$.
  \item[$(3)$] ${\frak a}_{(p,\infty),E}$ is the image of the point ${\frak a}_{p,E}$ under the natural map of spectra
  $\op{Spc}(DM(k)^c)\row\op{Spc}(SH(k)^c)$.
 \end{itemize}
\end{theorem}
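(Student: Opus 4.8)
The three assertions follow the pattern of Theorems~\ref{Bal-sp-Voev} and~\ref{a-p-E-prime}, with Morava $K$-theory in place of Chow groups and the extra $v_m$-power-torsion localisation of $MGL$-modules folded in; the details are carried out in \cite{BsMV}, the decisive input being the Main Theorem~\ref{thm-general} of the present paper. For $(1)$ the plan is to prove that the zero ideal $(0)$ of the isotropic stable homotopic category $SH_{(p,m)}(\wt{E}/\wt{E})^c$ over the flexible field $\wt{E}$ is a prime $\otimes$-$\triangle$-ideal. This category is non-zero (its unit has non-zero $K(m)$-motive), so $(0)$ is proper; and since $\psi_{(p,m),E}$ is tensor-triangulated, ${\frak a}_{(p,m),E}=\psi_{(p,m),E}^{-1}\big((0)\big)$ is the preimage of a prime ideal under a tensor-triangulated functor, hence prime --- which is exactly $(1)$. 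To see that $(0)$ is prime in $SH_{(p,m)}(\wt{E}/\wt{E})^c$ I would equip this category with a bounded non-degenerate weight structure (in Bondarko's sense) whose heart is the category $Chow^{K(m)}_{iso}(\wt{E})$ of isotropic $K(m)$-Chow motives: this is the role of the $v_m$-power-torsion localisation, which reduces the homotopy-theoretic content of $SH(\wt{E})^c$ to the $K(m)$-motivic one, so that a $K(m)$-analogue of the Chow weight structure (cf.\ Proposition~\ref{ws-iso-mot}) descends to the Verdier quotient by the $\Sigma^{\infty}_{\pp^1}$-spectra of $K(m)$-anisotropic varieties. By the Main Theorem~\ref{thm-general} (and Theorems~\ref{Chow-is-num-Pm} and~\ref{Iso-no-zd}) this heart equals $Chow^{K(m)}_{Num}(\wt{E})$, which is semisimple and has no $\otimes$-zero-divisors; the weight-complex argument of the Chow case (Theorem~\ref{a-p-E-prime}) --- through the conservative monoidal strong weight complex functor $SH_{(p,m)}(\wt{E}/\wt{E})^c\row K^b\big(Chow^{K(m)}_{iso}(\wt{E})\big)$ and the K\"unneth observation that $(0)$ is prime in $K^b$ of a semisimple $\otimes$-category without $\otimes$-zero-divisors --- then shows that $(0)$ is prime in $SH_{(p,m)}(\wt{E}/\wt{E})^c$.

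For $(2)$ the ``if'' direction is built into the set-up: the relation $\stackrel{(p,m)}{\sim}$ of Section~\ref{sect-BsMV} is defined so that $E/k\stackrel{(p,m)}{\sim}F/k$ forces $\psi_{(p,m),E}$ and $\psi_{(p,m),F}$ to be naturally isomorphic, hence to have equal kernels. For the converse one recovers $p$, $m$ and the $\stackrel{(p,m)}{\sim}$-class of $E/k$ from ${\frak a}_{(p,m),E}$ in turn. The prime $p$ is read off from the mod-$\ell$ Moore objects: as $SH_{(p,m)}(\wt{E}/\wt{E})$ is $p$-local, $\mathbbm{1}/\ell\in{\frak a}_{(p,m),E}$ for exactly the primes $\ell\neq p$. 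The integer $m$ is recovered because, $SH_{(p,m)}$ being $K(m)$-conservative, a finite motivic complex $Y_t$ of ``type $t$'' --- available for every $t$ by the motivic analogue of the periodicity and thick-subcategory theorems, with $K(j)_*(Y_t)=0$ for $j<t$ and non-zero for $j\geq t$ --- lies in ${\frak a}_{(p,m),E}$ precisely when $m<t$, so the ideal determines $m$. Finally, once $p=q$ and $m=n$ are forced, the equality ${\frak a}_{(p,m),E}={\frak a}_{(p,m),F}$ gives, for every smooth projective $X/k$, that $X_{\wt{E}}$ is $K(m)$-anisotropic if and only if $X_{\wt{F}}$ is (the forward implication being immediate, the converse because a $K(m)$-isotropic $X$ carries a $0$-cycle whose $K(m)$-push-forward to the point is non-zero, which survives to witness $\psi_{(p,m),E}(\Sigma^{\infty}_{\pp^1}X_+)\neq 0$); by the definition of $\stackrel{(p,m)}{\sim}$ this is precisely $E/k\stackrel{(p,m)}{\sim}F/k$.

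Part $(3)$ is then formal. The canonical tensor-triangulated functor $c:SH(k)^c\row DM(k)^c$ induces $\op{Spc}(c):\op{Spc}(DM(k)^c)\row\op{Spc}(SH(k)^c)$, ${\frak p}\mapsto c^{-1}({\frak p})$. Since $c$ carries $\Sigma^{\infty}_{\pp^1}$-spectra of $\Ch$-anisotropic (that is, $K(\infty)$-anisotropic) varieties to $p$-anisotropic motives, and objects with vanishing $MGL$-motive to $0$ in $DM(k;\ff_p)$ (as $H\zz/p$ is an $MGL$-algebra), it descends to a tensor-triangulated functor $\bar{c}:SH_{(p,\infty)}(\wt{E}/\wt{E})^c\row DM(\wt{E}/\wt{E};\ff_p)^c$ with $\psi_{p,E}\circ c=\bar{c}\circ\psi_{(p,\infty),E}$, and one checks that $\bar{c}$ is conservative (a finite $H\zz/p$-local motivic spectrum being detected by its $\ff_p$-motive). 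Hence ${\frak a}_{(p,\infty),E}=\kker(\psi_{(p,\infty),E})=\kker(\bar{c}\circ\psi_{(p,\infty),E})=\kker(\psi_{p,E}\circ c)=c^{-1}\kker(\psi_{p,E})=\op{Spc}(c)({\frak a}_{p,E})$, the second equality using conservativity of $\bar{c}$.

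The main obstacle is the step flagged under $(1)$: building the weight structure on the isotropic stable homotopic category and identifying its heart with $Chow^{K(m)}_{iso}$. Unlike $DM(k)$, the category $SH(k)$ has no Chow-type weight structure with an abelian-flavoured heart directly at hand, so one must first pass to the localisation killing $v_m$-power-torsion $MGL$-modules --- which turns the ambient category into a form of the category of $K(m)$-motives --- and only then verify that the further Verdier quotient by $K(m)$-anisotropic varieties is weight-exact. Once this is in place, the Main Theorem~\ref{thm-general} supplies the semisimplicity and absence of $\otimes$-zero-divisors in the heart, and the remainder runs parallel to the Chow case of Theorem~\ref{a-p-E-prime}.
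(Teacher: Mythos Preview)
The paper does not prove this theorem: it is quoted from \cite{BsMV}, and Section~\ref{sect-BsMV} only sketches the strategy for part~(1) (show that $(0)$ is prime in $SH_{(p,m)}(\wt{E}/\wt{E})^c$ using the Main Theorem~\ref{thm-general} as the key input). Your outline for~(1) is in line with that description, and you correctly flag the main technical point --- constructing the weight structure with heart $Chow^{K(m)}_{iso}$ --- as the obstacle whose resolution lives in \cite{BsMV}.

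There are, however, genuine gaps in your treatment of~(2) and~(3). For the ``if'' direction of~(2) you say the equivalence $\stackrel{(p,m)}{\sim}$ makes $\psi_{(p,m),E}$ and $\psi_{(p,m),F}$ naturally isomorphic; but these functors land in \emph{different} categories (over $\wt{E}$ and $\wt{F}$), so they cannot be isomorphic. The argument has to run as in the proof of Theorem~\ref{a-p-E-prime}: a compact $U$ killed over $\wt{E}$ is witnessed as such over some finitely generated $\wt{E}_{\alpha}$, and the $K(m)$-equivalence transports the $K(m)$-anisotropy of the witnesses to $\wt{F}$. For the ``only if'' direction your recovery of $m$ invokes ``motivic type-$t$ complexes'' via a motivic periodicity/thick-subcategory theorem; no such input is available in the paper, and in the motivic world this is a serious result you cannot simply cite. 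The approach actually used (and visible already in the $DM$-case of Theorem~\ref{a-p-E-prime}) is more elementary: exhibit explicit objects, built from suitable varieties, that lie in one kernel but not the other when the parameters differ.

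For~(3), your argument hinges on conservativity of the induced functor $\bar{c}:SH_{(p,\infty)}(\wt{E}/\wt{E})^c\row DM(\wt{E}/\wt{E};\ff_p)^c$. You justify this by saying that in the $m=\infty$ localisation one has killed objects with vanishing $MGL$-motive, and that $H\zz/p$ is an $MGL$-algebra. But the implication goes the wrong way: $M^{MGL}(U)\neq 0$ does not force the $H\zz/p$-motive of $U$ to be non-zero, so conservativity of $\bar{c}$ is not a formal consequence of what you wrote. One needs an honest identification of $SH_{(p,\infty)}(\wt{E}/\wt{E})^c$ with (a model of) $DM(\wt{E}/\wt{E};\ff_p)^c$, which is part of the work deferred to \cite{BsMV}.
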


Thus, we get plenty of new ``isotropic'' points of the Balmer spectrum of Morel-Voevdosky category. These points are analogous to and complement the ``classical'' Morava-points 
${\frak a}_{(p,m),Top}$ given by the topological realisation.
The structure of $\op{Spc}(SH(k)^c)$ was studied, in particular,
by Balmer and Heller-Ormsby in \cite{BalSSS} and \cite{HO}, who have shown
that this spectrum surjects to the spectra of Grothendieck-Witt and Milnor-Witt rings of $k$. There are few other results, in particular, comparison with equivariant homotopic spectra. But the question of the description of $\op{Spc}(SH(k)^c)$ remained widely open, and the above isotropic points provide by far the largest known 
structured piece of it (note that there is a huge number of equivalence classes of extensions, for a general field).

 \medskip

\noindent
{\bf Acknowledgements:}
The support of the EPSRC standard grant EP/T012625/1 is gratefully acknowledged. I would like to thank the Referee for useful suggestions which improved the article.

\section{Isotropic equivalence}
\label{sect-isotr-equiv}

Everywhere in this article, the ground field $k$ will be of characteristic zero.

Let $A^*$ be an oriented cohomology theory with localisation in the sense of \cite[Definition 2.1]{SU} (which is the standard axioms of Levine-Morel \cite[Definition 1.1.2]{LM} plus the excision axiom $(EXCI)$). We can introduce the notion of $A$-anisotropic varieties.

\begin{definition}
 \label{A-anis-var} Let $X\stackrel{\pi}{\row}\spec(k)$ be a smooth projective variety. We say that $X$ is $A$-anisotropic, if the map
 $\pi_*:A_*(X)\row A_*(\spec(k))=A$ is zero.
\end{definition}

\begin{observ}
 \label{A-anis-var-obsO} Let $X$ be a smooth projective variety over $k$.
 Then it is $A$-anisotropic if and only if the $A^*$-numerical ring 
 $A^*_{Num}(X)$ of $X$ is zero.
\end{observ}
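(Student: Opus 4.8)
The plan is to unwind both definitions and observe that the equivalence is essentially formal, the only non-trivial ingredient being Poincar\'e duality for oriented theories on smooth projective varieties. By definition, the numerical ring $A^*_{Num}(X)$ is the quotient of $A^*(X)$ by the subset $N^*(X)$ of \emph{numerically trivial} classes, namely those $\alpha\in A^*(X)$ with $\pi_*(\alpha\cdot\beta)=0$ in $A$ for every $\beta\in A^*(X)$. I would first record that $N^*(X)$ is a homogeneous ideal (since $\pi_*((\alpha\gamma)\cdot\beta)=\pi_*(\alpha\cdot(\gamma\beta))$ and $\gamma\beta$ stays inside $A^*(X)$), so that $A^*_{Num}(X)$ is indeed a ring, and that for smooth projective $X$ of dimension $d$ Poincar\'e duality identifies $A^*(X)$ with $A_*(X)$ via $A^n(X)=A_{d-n}(X)$, carries the unit $1\in A^0(X)$ to the fundamental class $[X]\in A_d(X)$, and intertwines $\pi_*\colon A_*(X)\row A_*(\spec(k))=A$ with the map $\alpha\mapsto\pi_*(\alpha)$, $A^*(X)\row A$.

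For the direction ``$A$-anisotropic $\Rightarrow A^*_{Num}(X)=0$'': assuming $\pi_*\colon A_*(X)\row A$ is zero, for any $\alpha,\beta\in A^*(X)$ the product $\alpha\cdot\beta$ again lies in $A^*(X)\cong A_*(X)$, hence $\pi_*(\alpha\cdot\beta)=0$; thus $N^*(X)=A^*(X)$ and $A^*_{Num}(X)=0$. For the converse, assuming $A^*_{Num}(X)=0$, i.e.\ $N^*(X)=A^*(X)$, every $\alpha\in A^*(X)$ is numerically trivial, so taking $\beta=1$ gives $\pi_*(\alpha)=\pi_*(\alpha\cdot1)=0$ for all $\alpha$; under the duality identification this is exactly the vanishing of $\pi_*\colon A_*(X)\row A$, i.e.\ $A$-anisotropy of $X$.

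I do not expect a genuine obstacle here: the statement is a formal reformulation of the definition of $A$-anisotropy. The only points demanding a little care are the identification of $A^*(X)$ with $A_*(X)$ and the fact that the ring product is compatible with push-forward to the point (so that $1$ plays the role of the fundamental class and $\pi_*(\alpha\cdot1)=\pi_*(\alpha)$) --- both standard for oriented Borel--Moore homology theories on smooth projective varieties in the sense of Levine--Morel \cite{LM} --- together with the elementary verification that $N^*(X)$ is an ideal.
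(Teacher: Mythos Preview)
Your proposal is correct and follows the same approach as the paper's proof, which is a single sentence recalling that the numerical pairing is $\langle x,y\rangle=\pi_*(x\cdot y)$. You have simply spelled out the formal steps (taking $\beta=1$ for one direction, closure of $A^*(X)$ under products for the other) and made explicit the identification $A^*(X)\cong A_*(X)$ that the paper leaves implicit; note that in the Levine--Morel framework this identification is definitional rather than a duality theorem, so your invocation of ``Poincar\'e duality'' is slightly heavier than necessary, but not wrong.
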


\begin{proof}
 It is sufficient to recall that the numerical pairing
 $A^*(X)\times A^*(X)\stackrel{\la\,,\,\ra}{\lrow}A$ is defined by
 $\la x,y\ra:=\pi_*(x\cdot y)$.
 \Qed
\end{proof}

\begin{observ}
 \label{A-anis-var-obsD} Let $f:X\row Y$ be a morphism of smooth projective varieties. Then if $Y$ is $A$-anisotropic, then so is $X$.
\end{observ}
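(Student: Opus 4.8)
The plan is to exploit the functoriality of push-forwards, together with the factorisation of structure morphisms through $f$. Write $\pi_X\colon X\row\spec(k)$ and $\pi_Y\colon Y\row\spec(k)$ for the two structure morphisms; since $f$ is a morphism over $k$ we have $\pi_X=\pi_Y\circ f$. As $X$ and $Y$ are both projective over $k$, the morphism $f$ is itself projective (any $k$-morphism between projective $k$-varieties is projective), so the push-forward $f_*\colon A_*(X)\row A_*(Y)$ is defined in the theory $A^*$. The axioms of an oriented cohomology theory with localisation include functoriality of projective push-forward under composition, which gives the identity $(\pi_X)_*=(\pi_Y)_*\circ f_*$ of maps $A_*(X)\row A_*(\spec(k))=A$.

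Given this, the conclusion is immediate: if $Y$ is $A$-anisotropic, then $(\pi_Y)_*=0$ by Definition \ref{A-anis-var}, hence $(\pi_X)_*=(\pi_Y)_*\circ f_*=0$, i.e.\ $X$ is $A$-anisotropic. (Equivalently, one could phrase the last step via Observation \ref{A-anis-var-obsO}, but the direct computation with the push-forwards is the shortest route.)

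There is no real obstacle here; the only point that deserves a word is the existence of $f_*$, which is why one notes that $f$ is a projective morphism. Everything else is just the standard composability $(\pi_Y\circ f)_*=(\pi_Y)_*\circ f_*$ built into the Levine--Morel axioms.
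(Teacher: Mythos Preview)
Your proof is correct and follows exactly the same approach as the paper: both use that $f$ is proper (projective), so $(\pi_X)_*=(\pi_Y)_*\circ f_*$, and the vanishing of $(\pi_Y)_*$ forces that of $(\pi_X)_*$. You simply spell out a bit more detail than the paper's one-line argument.
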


\begin{proof}
 Since $f$ is proper, $(\pi_X)_*=(\pi_Y)_*\circ f_*$.
 \Qed
\end{proof}

\begin{remark}
 \label{A-anis-var-rem}
 \begin{itemize}
  \item[$(1)$] Since every smooth projective variety contains a point $\spec(E)$ of finite degree $n=[E:k]$ and for the respective projection,
  $\pi_*\circ\pi^*=\cdot n$, it follows that $A$-anisotropic varieties may exist only if the theory $A^*$ is $n$-torsion: $n\cdot A=0$, for some natural $n$.
  \item[$(2)$] When $A^*=\CH^*/n$ is the Chow groups modulo $n$, our notion of $A$-anisotropy coincides with the notion of $n$-anisotropy: $X$ is $\CH^*/n$-anisotropic if and only if the degrees of all closed points of $X$ are divisible by $n$ -
  see \cite[Definition 2.14]{Iso}.
  \Red
 \end{itemize}
\end{remark}

Now we can introduce {\it isotropic equivalence} on $A^*$.

\begin{definition}
 \label{A-anis-elt}
 Let $X$ be a smooth projective variety and $x\in A^*(X)$. We say that
 $x$ is anisotropic, if $x=f_*(y)$ for some $f:Y\row X$ and $A$-anisotropic (smooth projective) $Y$.
\end{definition}

We have:

\begin{proposition}
 \label{A-anis-elt-properties}
 Anisotropic classes form an ideal which is stable under pull-backs and push-forwards, and so, under $A^*$-correspondences.
\end{proposition}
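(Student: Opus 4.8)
The plan is to verify the three assertions separately; the formal parts are the ideal structure and stability under push-forwards, and the one substantial point is stability under pull-backs. For the ideal structure, suppose $x_1=(f_1)_*(y_1)$ and $x_2=(f_2)_*(y_2)$ are anisotropic on $X$, with $f_i\colon Y_i\to X$ and $Y_i$ being $A$-anisotropic. Then $Y_1\sqcup Y_2$ is again $A$-anisotropic, since $A_*(Y_1\sqcup Y_2)=A_*(Y_1)\oplus A_*(Y_2)$ and the push-forward to $\spec(k)$ is computed componentwise; applying $(f_1\sqcup f_2)_*$ to $(y_1,y_2)$ yields $x_1+x_2$, so the sum is anisotropic (and $-x_1=(f_1)_*(-y_1)$ is anisotropic as well). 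For $z\in A^*(X)$ the projection formula gives $z\cdot (f_1)_*(y_1)=(f_1)_*\big(f_1^*(z)\cdot y_1\big)$, once more a push-forward of a class from the $A$-anisotropic $Y_1$; hence anisotropic classes form an ideal. Stability under push-forwards is immediate: if $x=f_*(y)$ with $f\colon Y\to X$ and $Y$ $A$-anisotropic, and $h\colon X\to X'$ is a morphism of smooth projective varieties, then $h_*(x)=(h\circ f)_*(y)$ is a push-forward of $y$ from the $A$-anisotropic variety $Y$.

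For pull-backs I would argue as follows. Every morphism $g\colon X'\to X$ of smooth projective varieties factors as the regular closed embedding $X'\hookrightarrow X'\times X$ given by its graph, followed by the smooth projective projection $X'\times X\to X$, so it suffices to treat these two types. If $g$ is smooth projective, the fibre product $W:=Y\times_X X'$ is smooth projective and the base-change square is transverse, so $g^*f_*(y)=f'_*\big((g')^*y\big)$ with $f'\colon W\to X'$, $g'\colon W\to Y$; since $W$ maps to the $A$-anisotropic $Y$, Observation \ref{A-anis-var-obsD} shows $W$ is $A$-anisotropic, and $g^*f_*(y)$ is anisotropic. If $g=i\colon X'\hookrightarrow X$ is a regular closed embedding, the fibre product $W=Y\times_X X'$ may be singular; here I would use the deformation to the normal cone, i.e. the refined Gysin homomorphism $i^{!}$ attached to the regular embedding $i$ (available for oriented theories with localisation), to write $i^*f_*(y)=f'_*\big(i^{!}(y)\big)$ with $i^{!}(y)\in A_*(W)$ and $f'\colon W\to X'$. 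Now $A_*(W)$ is spanned by classes $[V\to W]$ of smooth projective varieties over $W$ (alternatively, in characteristic zero one resolves the singularities of the relevant components of $W$, resp. of the normal cone), and every such $V$ maps through $W\to Y$ to the $A$-anisotropic variety $Y$, hence is itself $A$-anisotropic by Observation \ref{A-anis-var-obsD}; therefore $i^*f_*(y)=f'_*(i^{!}(y))$ is a sum of push-forwards of classes from $A$-anisotropic varieties, i.e. an anisotropic class. (One may also reduce in advance to $y=1$, as any anisotropic class has the form $\sum_\ell [b_\ell]\cdot(\phi_\ell)_*(1_{Y_\ell})$ with $Y_\ell$ $A$-anisotropic, and then factor each $\phi_\ell$ through its graph.) Finally, an $A^*$-correspondence acts by $\alpha\mapsto (\operatorname{pr}_{X'})_*\big((\operatorname{pr}_X)^*(\alpha)\cdot\gamma\big)$, a composition of a pull-back, a multiplication, and a push-forward, so the preceding cases together show anisotropic classes are preserved under correspondences.

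The only genuine difficulty is the pull-back along a regular closed embedding: one must make the deformation-to-the-normal-cone reduction rigorous in the stated generality and check that the class produced on the possibly singular fibre product $W$ really is a combination of push-forwards of classes from smooth projective varieties mapping to $Y$. This is exactly the point where the hypotheses enter — being an oriented cohomology theory with localisation, together with resolution of singularities in characteristic zero; for the free theories that actually occur in the paper ($\Ch^*$, $K(m)^*$, $P(m)^*$, and quotients of $BP^*$) the needed generation of $A_*(W)$ by smooth projective push-forwards is automatic, so in practice this step is routine.
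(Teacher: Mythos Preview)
Your treatment of the ideal structure and push-forward stability is fine and matches the paper's. The pull-back argument, however, takes an unnecessarily hard route and does not quite close in the stated generality. You factor the pull-back map $g$ as a regular closed embedding followed by a smooth projection, and then, for the embedding, invoke refined Gysin maps on a possibly singular fibre product $W$ together with resolution to express $A_*(W)$ through smooth projective sources. This forces you to work with $A_*$ on singular schemes, which is \emph{not} part of the axiomatics for an oriented theory with localisation in the sense used here (these are theories on $\smk$); you acknowledge this, but the proposition is stated in that generality, so the gap is real.

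The paper avoids all of this by a one-line trick: instead of factoring $g$, it factors the anisotropic map $f\colon Q\to X$ through the projection $\pi\colon X\times Q\to X$ (via the graph of $f$). The product $X\times Q$ is still $A$-anisotropic (it maps to $Q$, Observation~\ref{A-anis-var-obsD}), and the projection $\pi$, being smooth, is transversal to \emph{every} morphism $g\colon Y\to X$. Axiom $(A2)$ then gives at once that $g^*(x)$ lies in the image of the push-forward from $Y\times Q$, which is smooth projective and $A$-anisotropic. No singular fibre products, no refined Gysin, no resolution --- and it works for arbitrary oriented theories with localisation, not only free ones.

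So the moral: rather than decomposing $g$ and fighting the bad fibre, enlarge the source of $f$ to make it a product projection; transversality to anything is then automatic.
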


\begin{proof}
 The fact that anisotropic classes are stable under push-forwards is obvious from definition. As for pull-backs, if $x$ belongs to the image
 of $f_*$, where $f:Q\row X$ and $Q$ is $A$-anisotropic, then it also
 belongs to the image of $\pi_*$, where $\pi:X\times Q\row X$ is the
 projection and $X\times Q$ is still anisotropic. But $\pi$ is transversal to any $g:Y\row X$. Thus, $g^*(x)$ belongs to the image of 
 $\wt{\pi}_*$, where $\wt{\pi}:Y\times Q\row Y$, by the axiom $(A2)$ - see \cite[Definition 1.1.2]{LM}, and $Y\times Q$  
 is anisotropic too. Also, clearly, anisotropic classes form an ideal under external products. The remaining properties follow.
 \Qed
\end{proof}

We get the {\it isotropic version} of the theory $A^*$.

\begin{definition}
 \label{iso-theory}
 Let $X$ be smooth projective over $k$. Define 
 $$
 A^*_{iso}(X):=A^*(X)/(\,\text{anisotropic classes}\,).
 $$
\end{definition}

Using \cite[Example 4.1]{Iso} (cf. \cite[Example 4.6]{RNCT}) this can be extended to an oriented cohomology theory (with localisation) on $\smk$ as $A^*_{\Gamma}$, where $\Gamma$ consists of all classes from $A$-anisotropic (smooth projective) varieties.

If $f:Y\row X$ is any morphism of smooth projective varieties and $y\in A^*(Y)$, $x\in A^*(X)$, then
$\la y,f^*(x)\ra=\la f_*(y),x\ra\in A$. Thus, any {\it anisotropic} class
is {\it numerically trivial} (by Observation \ref{A-anis-var-obsO}) and we have a surjection
$A^*_{iso}\twoheadrightarrow A^*_{Num}$ of oriented cohomology theories.

\section{Brown-Peterson theory and Morava K-theories}

\subsection{Multiplicative operations}
\label{subs-three-one}

To any oriented cohomology theory $A^*$ one can assign a formal group
law $(A,F_A)$, where $A=A^*(\spec(k))$ and $F_A$ describes the Chern class of the tensor product of two line bundles in terms of the Chern classes of the factors: $c^A_1(L\otimes M)=F_A(c^A_1(L),c^A_1(M))$.

If $A^*$ and $B^*$ are two oriented cohomology theories on $\smk$, a {\it multiplicative operation} $G:A^*\row B^*$ is a collection of ring homomorphisms $A^*(X)\row B^*(X)$, for all smooth $X/k$, commuting with
pull-back maps. The difference with the {\it morphisms of theories} is
that the above maps don't have to commute with push-forwards. 

To any multiplicative operation $G:A^*\row B^*$ one can assign the morphism 
$$
(\ffi_G,\gamma_G):(A,F_A)\row (B,F_B)
$$ 
of the respective formal group laws, where $\ffi_G:A\row B$ is the action of $G$ on $\spec(k)$ and
$\gamma_G(x)\in B[[x]]x$ describes, how $G$ acts on the Chern classes of line bundles:
$$
G(c_1^A(L))=\gamma_G(c_1^B(L)).
$$
These data satisfies:
$$
\ffi_G(F_A)(\gamma_G(x),\gamma_G(y))=\gamma_G(F_B(x,y)).
$$

An oriented cohomology theory is {\it free}, if it is obtained from algebraic cobordism of Levine-Morel \cite{LM} by change of coefficients:
$A^*(X)=\Omega^*(X)\otimes_{\laz}A$. Such theories are in 1-to-1 correspondence with formal group laws: $A^*\,\,\leftrightarrow\,\,
\laz\row A$.
By \cite[Theorem 6.9]{SU}, the multiplicative operations 
$G:A^*\row B^*$ between free theories are in 1-to-1 correspondence with the morphisms
of the respective FGLs.

We say that a multiplicative operation $G:A^*\row B^*$ is of {\it invertible type}, if the leading coefficient $\gamma'(0)$ is invertible in $B$. Such a power series is then invertible with respect to the  composition of series. We say that $G$ is {\it stable}, if this coefficient is equal to $1$. 

In the case when the source theory is the algebraic cobordism of Levine-Morel $\Omega^*$, a multiplicative operation $G$ of invertible type is completely determined by $\gamma_G$ (due to universality of the 
respective FGL), which can be any power series with an invertible coefficient at $x$.

The structure on the algebraic cobordism is provided by the Landweber-Novikov operations. The {\it total Landweber-Novikov operation}
$$
S_{L-N}^{Tot}: \Omega^*\row\Omega^*[\ov{b}],
$$
where $\ov{b}=b_i,\,i\geq 1$ and $\ddim(b_i)=i$ is given by the homomorphism $(\ffi,\gamma)$ of  formal group laws, where
the ({\it stable}) change of parameter $\gamma(x)=x+b_1x^2+b_2x^3+\ldots$ has formal variables as coefficients - see \cite[Example 4.1.25]{LM}.  
This operation is universal among {\it stable multiplicative operations}.
Namely, any such operation $G$ can be completed uniquely to a commutative square
$$
\xymatrix{
\Omega^* \ar[r]^{S_{L-N}^{Tot}} \ar[d]_{\theta_A}& \Omega^*[\ov{b}] \ar[d]^{\psi}\\
A^*\ar[r]^{G} & B^*
}
$$
where $\theta_A$ is the canonical morphism of theories (reflecting the universality of algebraic cobordism - \cite[Theorem 1.2.6]{LM}) and 
$\psi$ extends a similar canonical morphism $\theta_B$ by sending the formal variables $b_i$ to the coefficients of the power series $\gamma_G(x)\in B[[x]]x$.

Let $G:A^*\row B^*$ be a multiplicative operation of invertible type between {\it free} theories. 
Let $X$ be a smooth projective variety and 
$\op{Td}_G(X)\in B^*(X)$ be the
{\it Todd genus} of $X$ corresponding to the operation $G$ - see \cite[Definition 2.5.2]{P-RR}. In other words,
$$
\op{Td}_G(X)=\prod_j\left(\frac{x}{\gamma_G(x)}\right)(\lambda_j),
$$
where $\lambda_j$ runs over all {\it $B$-roots} of the Tangent bundle $T_X$ (recall, that {\it $B$-roots} are ``formal elements'', elementary symmetric functions of which are $B$-Chern classes of our bundle - see \cite[2.3]{SU} and \cite[4.1.8]{LM}; below we will also call by this name any collection of elements of degree $1$ in $B^*(X)$
satisfying the same property, if one exists).
$\op{Td}_G(X)$ is an invertible element in $B^*(X)$.
Then the Riemann-Roch theorem for multiplicative operations (of invertible type) - Panin \cite[Theorem 2.5.4]{P-RR} claims that the transformation $G':A^*\row B^*$, defined on a given variety $Y$ by $G\cdot\op{Td}_G(Y)$, commutes with (proper) push-forwards.

\subsection{$p$-typical theories}
\label{subsect-p-typ-theor}

The theories we are interested in are related to a choice of a prime number $p$ and can be produced out of the algebraic cobordism $\Omega^*$
of Levine-Morel \cite{LM}. Algebraic cobordism is unversal among all oriented cohomology theories. For $\zz_{(p)}$-localised theories, this role is played by the Brown-Peterson theory $BP^*$ \cite{BP}. This theory is obtained
from $\Omega_{\zz_{(p)}}^*$ by the same multiplicative projector $\rho$
as in topology (see \cite[I.3]{Wi} and \cite{Ad}). Namely, $\rho:\Omega_{\zz_{(p)}}^*\row\Omega_{\zz_{(p)}}^*$ is a multiplicative operation, which satisfies: $\rho\circ\rho=\rho$ and is determined by the property that $\rho([\pp^n])=[\pp^n]$, if $n$ is a power of $p$ minus one, and is zero, otherwise. In particular, (by the formula of Mischenko) the logarithm of this theory is given by:
$$
\log_{BP}(x)=\sum_{i\geq 0}\frac{[\pp^{p^i-1}]}{p^i}x^{p^i}.
$$
By \cite[Proposition 4.9(2)]{SU}, $BP^*$ is obtained from algebraic 
cobordism by change of coefficients: $BP^*(X)=\Omega^*(X)\otimes_{\laz}BP$, that is, it is a {\it free theory}.
Here $\laz\row BP$ is the universal {$p$-typical} formal group law.
A torsion-free FGL over $\zz_{(p)}$ is called {\it $p$-typical}, 
if its logarithm has
only terms of degree a power of $p$, for the general case see
\cite[Definition 2.1.22]{Rav}. We call an oriented cohomology theory $A^*$ over $\zz_{(p)}$ 
{\it $p$-typical} if the respective FGL is.
The coefficient ring of the Brown-Peterson theory has the form $BP=\zz_{(p)}[v_i; {i\geq 1}]$, where
$\ddim(v_i)=p^i-1$. It is traditional to denote $v_0=p$.
The Brown-Peterson theory is universal among {\it $p$-typical} oriented theories. Namely, for any such theory $A^*$, there exists a unique 
morphism of theories $\theta_A: BP^*\row A^*$ - see \cite[4.1.12, A2.1.25]{Rav} and \cite[Theorem 6.9]{SU}. 

The {\it Total Landweber-Novikov operation} 
$$
S^{Tot}_{BP}:BP^*\row BP^*[\ov{t}],
$$
where $\ov{t}=t_i$, $i\geq 1$, $\ddim(t_i)=p^i-1$, is a stable multiplicative operation given
by the morphism $(\ffi,\gamma)$ of formal group laws with 
$\gamma^{-1}(x)=\sum_{i\geq 0}^{BP}t_ix^{p^i}$ and $t_0=1$ (note the formal summation!).
This operation is universal among stable multiplicative operations between 
$p$-typical cohomology theories. That is, any such operation $G$
extends uniquely to a commutative square:
\begin{equation}
\label{BP-ops-univ}
\xymatrix{
BP^* \ar[r]^{S_{BP}^{Tot}} \ar[d]_{\theta_A}& 
BP^*[\ov{t}] \ar[d]^{\psi}\\
A^*\ar[r]^{G} & B^*
}
\end{equation}
where $\theta_A$ is the canonical morphism of theories (classifying $p$-typical theories) and $\psi$ is an extension of $\theta_B$, mapping 
$t_i$ to $d_i\in B$, where $\gamma^{-1}_G(x)=\sum_{i\geq 0}^{B}d_ix^{p^i}$ -
see \cite[Lemma A2.1.26]{Rav}.

We have an algebraic version $K(p,m)$ of Morava K-theory, which is an oriented cohomology theory in the sense of \cite[Definition 2.1]{SU} obtained from $BP$-theory by the change of coefficients: $K(p,m)^*=BP^*\otimes_{BP}K(p,m)$, where $BP\row K(p,m)$ is a
$p$-typical formal group law of level $m$. That is, for some choice of generators of $BP$, $K(p,m)=\ff_p[v_m,v_m^{-1}]$, where $v_i\mapsto 0$, for $i\neq m$. Following the topological tradition, we will also denote this theory simply as $K(m)^*$.

By the result of Landweber \cite[Theorem 2.7]{La73b}, the prime
ideals of $BP$ invariant under Landweber-Novikov operations are exactly the ideals $I(m)=(v_0,v_1,\ldots,v_{m-1})$,
for $0\leq m\leq\infty$. Consider the {\it free} theory 
$P(m)^*$ obtained from $BP^*$ by change of coefficients
$P(m)^*(X)=BP^*(X)\otimes_{BP}P(m)$, where $P(m)=BP/I(m)$. In particular, 
$P(0)^*=BP^*$, $P(1)^*$ is the mod-$p$ version $\ov{BP}^*=BP^*/p$ and
$P(\infty)=\Ch^*=\CH^*/p$ is Chow groups modulo $p$. For $m\geq 1$, these
theories are $p$-torsion. In particular, it makes sense to speak about
{\it isotropic equivalence} on $P(m)^*$.

We will also use the theory $P\{m\}^*=P(m)^*[v_m^{-1}]$ obtained from $P(m)^*$ by inverting $v_m$. There are the following natural
morphisms among our $p$-typical theories:
$$
BP^*\row P(m)^*\row P\{m\}^*\row K(m)^*.
$$

\section{The Main Theorem}

Our main result compares isotropic and numerical properties of the $P(m)^*$ theory. Namely, we will show that, for 
$1\leq m\leq\infty$, the theories $P(m)_{iso}^*$ and 
$P(m)_{Num}^*$ coincide.
This plays the crucial role in applications below, and implies the respective facts about Chow groups and Morava K-theories.  

\subsection{The strategy of the proof}

Let me start by explaining the strategy of the proof. The argument applies to any free theory $Q^*$ obtained from $BP^*$ by moding out a non-zero ideal $J$ of $BP$ invariant under $BP$-Landweber-Novikov operations. 

Let $X$ be a smooth projective variety over a flexible field. We need to show that any {\it numerically trivial} class $u$ in $Q^*(X)$ is {\it $Q$-anisotropic}.

The first step is to lift $u$ to an element of the algebraic cobordism of Levine-Morel $\Omega^*(X)$ (via the natural maps
$\Omega^*(X)\row BP^*(X)\row Q^*(X)$) in such a way that the
action of all Landweber-Novikov operations on it produces elements which are numerically trivial in $Q^*(X)$ (as $u$ itself). This is achieved by applying the multiplicative projector defining the $BP$-theory to an arbitrary lifting and uses the fact that the $BP$-Landweber-Novikov operations respect $Q$-numerical equivalence. This
is done in Proposition \ref{killing-SLN-Gen} below, see also 
Proposition \ref{L-N-Pm-Num-Gen}. Thus, we may assume that
$u$ is represented by the class of a projective map 
$[Y\stackrel{y}{\row}X]$ from some smooth variety $Y$ of dimension $d$. 
The $Q$-numerical triviality of the action of Landweber-Novikov operations on $[Y\stackrel{y}{\row}X]$ means that any polynomial in Chern classes of $T_Y$ is $Q$-numerically trivial on $X$.

The second step is to substitute $u$ by a class of a regular 
embedding $[Y\stackrel{y}{\row}X]$, with the normal bundle {\it $Q$-equivalent} to a pull-back of some element from $K_0(X)$ (where we call two elements of $K_0$ to be {\it $Q$-equivalent}, if they have the same $Q$-Chern classes).
 By twisting $T_Y$ by a sufficiently high $p$-primary power of a very ample line bundle (which is a {\it $Q$-equivalence}), we may assume that $T_Y$ is generated by global sections and defines an embedding $Y\stackrel{f}{\row} Gr(d,N)$ into some Grassmannian. Then 
 $$
 v=[Y\stackrel{(y,f)}{\row}X\times Gr(d,N)]
 $$ 
 is a class of a regular embedding,
whose projection to $X$ is $u$ and which is still $Q$-numerically trivial, since the restriction of $Q^*(X\times Gr(d,N))$ to $Y$ as an algebra over $Q^*(X)$ is generated exactly by the Chern classes of $T_Y$, polynomials in which are $Q$-numerically trivial by Step 1. 
It is sufficient to prove the claim for $v$. 
Moreover, since $T_Y$ is {\it $Q$-equivalent} to the restriction of the tautological vector bundle from the Grassmannian, the normal bundle of our embedding has the 
needed property. 

With a bit of extra work, we may reduce to the case of a regular
embedding $u=[Y\stackrel{y}{\row}X]$, whose normal bundle is
{\it $Q$-equivalent} to the restriction of the sum $[L_1]+\ldots+[L_n]$ of classes of line bundles from $X$, where $n=\op{codim}(u)$. 

The third step is to use the deformation to the normal cone construction and reduce to the case of a complete intersection.
Here we replace $X$ by $W=Bl_{X\times\pp^1}(Y\times 0)$ and $u$ by the pull-back of $-[Y\times 0]$. This class is still $Q$-numerically trivial and it's push-forward to $X$ is $-u$. But, at the same time, it is equal to the product of 1-st $Q$-Chern classes of line bundles
$\xi(\xi+_Qa_1)\cdot\ldots\cdot(\xi+_Qa_n)$, where $\xi=c^Q_1(O(1))$ and $a_i=c_1^Q(L_i)$. 
The key here is the fact that the $Q$-roots of the normal bundle $N_{Y\times 0\subset X\times\pp^1}$ are defined on $X$ and there is a zero among them.
By twisting by a sufficiently high $p$-primary power of an ample line bundle, we may assume that our line bundles are very ample and so, our class is a $Q$-numerically trivial complete intersection. 

Finally, on the fourth step we prove that any numerically trivial
complete intersection is anisotropic. This result holds for any {\it free} theory $A^*$ and is proven in 
Proposition \ref{anis-compl-int}. 
This finishes the proof of the Main Theorem.

\subsection{The proof}
Let $Q=BP/J$, where $J\subset BP$ is a non-zero ideal invariant under Landweber-Novikov operations. Consider
the {\it free} theory $Q^*=BP^*\otimes_{BP}Q$. 
Let $Q^*_{iso}$ and $Q^*_{Num}$ be the {\it isotropic} and the {\it numerical} versions of the theory (the latter one is obtainded from $Q^*$ by moding-out the kernel of the numerical pairing on $Q^*$ - see \cite[Definition 4.3]{Iso}). 
We are going to show that $Q^*_{iso}=Q^*_{Num}$. 
The case where $J$ is the Landweber ideal $I(m)$ will give us the statement for $P(m)$ then.

\begin{proposition}
 \label{L-N-Pm-Num-Gen}
 Let $J\subset BP$ be an invariant ideal, $Q=BP/J$ and $Q^*$ be the free theory given by: $Q^*(X)=BP^*(X)\otimes_{BP}Q$.
 Then the Landweber-Novikov operations descend to $Q^*_{Num}$.
\end{proposition}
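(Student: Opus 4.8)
The plan is to deduce the statement from Panin's Riemann--Roch theorem together with the fact that the total Landweber--Novikov operation, being \emph{stable}, is ``unipotent'' with respect to the codimension filtration. First, invariance of $J$ means precisely that $S^{Tot}_{BP}\colon BP^*\row BP^*[\ov{t}]$ carries $J\cdot BP^*(X)$ into $J\cdot BP^*(X)[\ov{t}]$; hence it descends to a stable multiplicative operation $S\colon Q^*\row\wt{Q}^*$ between \emph{free} theories, where $\wt{Q}^*$ is the free theory with coefficient ring $\wt{Q}:=Q[\ov{t}]$ and the same formal group law as $Q^*$, and $S=\sum_\alpha S_\alpha\,t^\alpha$ with the $S_\alpha$ the Landweber--Novikov operations on $Q^*$. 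It therefore suffices to show that $S$ sends numerically trivial classes to numerically trivial ones: collecting the coefficients of the monomials $t^\alpha$ then gives that each $S_\alpha$ preserves numerical triviality, which is the assertion. Here one also uses the elementary remark that, for $X$ smooth projective, an element $a=\sum_\alpha a_\alpha\,t^\alpha\in\wt{Q}^*(X)=Q^*(X)[\ov{t}]$ is $\wt{Q}$-numerically trivial if and only if every coefficient $a_\alpha\in Q^*(X)$ is $Q$-numerically trivial --- indeed the $\wt{Q}$-valued numerical pairing on $\wt{Q}^*(X)$ is the $\wt{Q}$-bilinear extension of the $Q$-valued one, so pairing $a$ against the ``constant'' classes $y\in Q^*(X)$ forces $\la a_\alpha,y\ra=0$ for all $\alpha$ and $y$.

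\emph{Riemann--Roch.} Since $S$ is stable it is of invertible type, so by Panin \cite[Theorem 2.5.4]{P-RR} the transformation $z\mapsto S(z)\cdot\op{Td}_S$ commutes with push-forwards. Applied to the structure morphism $\pi\colon X\row\spec(k)$, for which $\op{Td}_S(\spec k)=1$, this gives
\[
\ffi_S(\pi_*z)=\pi_*\bigl(S(z)\cdot\op{Td}_S(X)\bigr)\in\wt{Q}\qquad(z\in Q^*(X)),
\]
with $\op{Td}_S(X)\in\wt{Q}^*(X)$ invertible. Now let $u\in Q^*(X)$ be numerically trivial; then $\pi_*(u\cdot y)=0$ for all $y\in Q^*(X)$, and multiplicativity of $S$ gives $0=\ffi_S\bigl(\pi_*(u\cdot y)\bigr)=\pi_*\bigl(S(u)\cdot S(y)\cdot\op{Td}_S(X)\bigr)$ for all such $y$. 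If the elements $S(y)\cdot\op{Td}_S(X)$, $y\in Q^*(X)$, generate $\wt{Q}^*(X)$ as a $\wt{Q}$-module, then --- $\pi_*$ being $\wt{Q}$-linear --- $\pi_*\bigl(S(u)\cdot b\bigr)=0$ for every $b\in\wt{Q}^*(X)$, i.e. $S(u)$ is $\wt{Q}$-numerically trivial, and we are done by the previous paragraph.

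\emph{Generation.} Since $\op{Td}_S(X)$ is invertible, it is enough to check that $Q^*(X)$ lies in the $\wt{Q}$-submodule spanned by $S(Q^*(X))$. This is where stability is used: $S$ preserves the finite filtration $\mathrm{F}^{\bullet}$ of $Q^*(X)$ by codimension of support and induces the identity on $\op{gr}_{\mathrm{F}}^{\bullet}$ --- the standard ``first order'' behaviour of stable operations, which one reads off from the Riemann--Roch formula above applied to a regular embedding $[Z\hookrightarrow X]$ of codimension $c$. Consequently, for $y\in\mathrm{F}^{c}$ the difference $y-S(y)$ lies in $\mathrm{F}^{c+1}\wt{Q}^*(X)$, whose $t^\alpha$-coefficients lie in $\mathrm{F}^{c+1}Q^*(X)$; since $\mathrm{F}^{\bullet}$ is a filtration by $Q$-submodules with $\mathrm{F}^{>\ddim X}=0$, a downward induction along it now expresses every $y\in Q^*(X)$ as a $\wt{Q}$-linear combination of elements of $S(Q^*(X))$.

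The main obstacle I anticipate is precisely this last point: making the ``unipotence'' of the stable operation $S$ relative to the codimension filtration precise and robust --- in particular without assuming freeness of $Q^*(X)$ over $Q$ --- so that $S$ together with $\wt{Q}$-coefficients genuinely recovers all of $\wt{Q}^*(X)$. Everything else is a formal consequence of Panin's Riemann--Roch theorem and the definition of numerical equivalence, and the case $J=I(m)$ then yields the corresponding statement for $P(m)$.
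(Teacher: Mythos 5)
Your overall strategy --- descend $S^{Tot}_{BP}$ to a stable operation $S\colon Q^*\row\wt Q^*=Q^*[\ov t]$, apply Panin's Riemann--Roch, and reduce the matter to showing that $S(Q^*(X))$ generates $\wt Q^*(X)$ as a $\wt Q$-module --- is a legitimate alternative to the paper's argument. Where the paper \emph{extends} $S^{Tot}_Q$ to a multiplicative endomorphism $G\colon Q^*[\ov t]\row Q^*[\ov t]$ (sending $t_i\mapsto t_i$) and proves $G$ invertible at the level of FGL morphisms (the $\ov t$-constant term of $\ffi_G$ is the identity and higher monomials lower dimension, so $\ffi_G$ is unipotent; $\gamma_G$ is composition-invertible), your version needs only a generation statement, at the price of an explicit induction. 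The real divergence, and the place where your write-up has a genuine flaw, is the choice of filtration.

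The ``unipotence'' claim as you state it --- that $S$ induces the identity on the associated graded of the filtration of $Q^*(X)$ by \emph{codimension of support} --- is false in general. The failure comes exactly from the coefficient ring: $\ffi_S$ (the right unit $\eta_R$ in $Q[\ov t]$) is \emph{not} the identity on $Q$. Concretely, for $Q=P(1)=BP/p$ one has $\ffi_S(v_2)=v_2+v_1t_1^p-v_1^p t_1$ in $Q[\ov t]$; so if $[Z\hookrightarrow X]$ is a regular codimension-$c$ cycle with nonzero class in $\operatorname{gr}^c_F$, then for $y=v_2\cdot[Z]\in F^c$ the $t_1$-coefficient of $S(y)=S(v_2)\,S([Z])$ is $\equiv -v_1^p[Z]\not\equiv 0\pmod{F^{c+1}}$. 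Your Riemann--Roch reading off ``first-order identity'' is correct only for classes of the form $i_*(1)$, not for $Q$-multiples of them, and the descending induction along $F^\bullet$ needs the claim for the whole of $F^c$. (For $J=0$, i.e.\ $Q=BP$, the same failure already occurs one level down with $\ffi_S(v_1)=v_1+pt_1$.)

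The fix is easy and actually simpler than the support filtration: use the grading by \emph{codegree}. Since $S$ is degree-preserving and $t^\alpha$ has positive dimension, $S_\alpha$ raises codegree by $|\alpha|>0$ for $\alpha\ne 0$; moreover $Q^c(X)=0$ for $c>\ddim X$, and by Levine--Morel \cite[Corollary 1.2.13]{LM} the $Q$-module $Q^*(X)$ is generated by $Q^{\geq 0}(X)$. A descending induction on $c$ in $\{0,\dots,\ddim X\}$ then gives $Q^{\geq 0}(X)\subset\wt Q\text{-span of }S(Q^*(X))$, whence $Q^*(X)=Q\cdot Q^{\geq 0}(X)$ lies in that span as well, and the rest of your argument goes through unchanged. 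With this correction your route is sound, if somewhat more hands-on than the paper's FGL-invertibility argument, which sidesteps the whole generation question.
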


\begin{proof} 
Since the ideal $J$ is invariant, the Total $BP$-Landweber-Novikov 
operation descends to the $Q^*$-theory:
$$
S^{Tot}_{Q}: Q^*\row Q^*[\ov{t}]
$$
We need to show that Landweber-Novikov operations preserve
the ideal of numerically-trivial classes.

Our operation can be extended to a multiplicative endomorphism of the $Q^*[\ov{t}]$-theory 
$$
G=\wt{S}_{Q}^{Tot}: Q^*[\ov{t}]\row Q^*[\ov{t}],
$$
by mapping $t_i$ to $t_i$. In other words, for
the respective morphism  $(\ffi_G,\gamma_G)$ of the FGLs,
$\ffi_G$ acts on $Q$ as $\ffi_{S^{Tot}_{Q}}$ and maps $t_i$ to $t_i$,
while $\gamma^{-1}_G(x)=\sum_{i\geq 0}^{Q}t_ix^{p^i}$, with $t_0=1$. 
Recall, that according to \cite[Theorem 6.9]{SU}, multiplicative operations between {\it free} theories are
in 1-to-1 correspondence with the morphisms of the respective formal group laws.
Since the constant term of $\ffi_{S^{Tot}_{Q}}$ is the identity map on 
$Q$, and the terms at higher monomials in $\ov{t}$ decrease the dimension of elements, it follows that $\ffi_G:Q[\ov{t}]\row Q[\ov{t}]$ is invertible. Also, the power series $\gamma_G$ is invertible (with respect to the composition). Then the pair $(\ffi_H,\gamma_H)$ given by: $\ffi_H=\ffi_G^{-1}$ and $\gamma_H(x)=\ffi_G^{-1}(\gamma_G^{-1})(x)$
defines an endomorphism of the FGL $(Q[\ov{t}],F_{Q})$ inverse to
$(\ffi_G,\gamma_G)$. 
Recall, that the composition of morphisms of FGLs is defined as: 
$(\ffi_{H\circ G},\gamma_{H\circ G}(x))=
(\ffi_H\circ\ffi_G,\ffi_H(\gamma_G)(\gamma_H(x)))$.
Hence, the operation $G$ is invertible. In particular, it is surjective. 

Let $X\stackrel{\pi}{\row}\spec(k)$ be a smooth projective variety and 
$\op{Td}_G(X)\in Q^*(X)[\ov{t}]$ be the
{\it Todd genus} of $X$ corresponding to our operation $G$ - see Subsection \ref{subs-three-one} and \cite[Definition 2.5.2]{P-RR}.
This is an invertible element in $Q^*(X)[\ov{t}]$.
Recall, that the Riemann-Roch theorem for multiplicative operations - Panin \cite[Theorem 2.5.4]{P-RR} claims that the transformation $G':Q^*[\ov{t}]\row Q^*[\ov{t}]$ defined on a given variety $Y$ by $G\cdot\op{Td}_G(Y)$ commutes with (proper) push-forwards.

Let $x\in Q^*(X)[\ov{t}]$ be a numerically trivial element.
Since $G=\wt{S}^{Tot}_{Q}$ is surjective and $\op{Td}_G(X)$ is invertible, for any 
$z\in Q^*(X)[\ov{t}]$, there exists $y\in Q^*(X)[\ov{t}]$, such 
that $z=\wt{S}^{Tot}_{Q}(y)\cdot\op{Td}_G(X)$. Then
$$
\pi_*(\wt{S}^{Tot}_{Q}(x)\cdot z)=\pi_*(\wt{S}^{Tot}_{Q}(x)\wt{S}^{Tot}_{Q}(y)\cdot\op{Td}_G(X))=\pi_*(\wt{S}^{Tot}_{Q}(x\cdot y)\cdot\op{Td}_G(X)). 
$$
By the Riemann-Roch theorem \cite[Theorem 2.5.4]{P-RR}, using the fact that $\op{Td}_G(\op{Spec}(k))=1$, this is equal to
$$
\wt{S}^{Tot}_{Q}(\pi_*(x\cdot y))\cdot \op{Td}_G(\op{Spec}(k))=
\wt{S}^{Tot}_{Q}(\pi_*(x\cdot y)). 
$$
So, it is zero, since $x$ was numerically trivial.
Hence, $\wt{S}^{Tot}_{Q}(x)$ is numerically trivial too.
 \Qed
\end{proof}

\begin{remark}
 \label{L-N-Pm-Num-Gen-rem}
 This statement applies, among others, to the theories
 $Q^*=BP^*\otimes_{BP}BP/I(m)^r$, for any $r\in\nn$ and $0\leq m\leq\infty$.
 In particular, to all theories $P(m)$ (including $P(0)^*=BP^*$).
 \Red
\end{remark}

The following result is one of the key steps of the construction.
It permits to substitute the non-trivial and elaborate considerations
of \cite[Theorem 4.11]{Iso} and \cite{IN} by a clean and transparent multiplicative projector argument.

\begin{proposition}
 \label{killing-SLN-Gen}
 Let $J\subset BP$ be a non-zero invariant ideal and $Q^*=BP^*\otimes_{BP}BP/J$.
 Let $X/k$ be a smooth projective variety and $x\in Q^*(X)$ be a 
 numerically trivial class. Then it may be represented by 
 $\wt{x}\in\Omega^*(X)$, such that the projection of $S^{Tot}_{L-N}(\wt{x})$ to $Q^*$ is numerically trivial.
\end{proposition}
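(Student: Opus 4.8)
The goal is to take an arbitrary lift of $x$ to $\Omega^*(X)$ and improve it using the multiplicative projector $\rho:\Omega^*_{\zz_{(p)}}\row\Omega^*_{\zz_{(p)}}$ defining $BP^*$. Recall that $BP^*$ is a direct summand of $\Omega^*_{\zz_{(p)}}$ cut out by the idempotent multiplicative operation $\rho$, and the projection $\Omega^*_{\zz_{(p)}}(X)\row BP^*(X)$ is (up to the canonical identification) just $\rho$ followed by the structural morphism. So given any lift $x'\in\Omega^*_{\zz_{(p)}}(X)$ of $x$, set $\wt{x}:=\rho(x')$; this is again a lift of $x$, since $\rho$ is the identity on the $BP$-summand. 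The point I would then exploit is that, because $\rho$ is multiplicative and idempotent, its image is closed under $BP$-Landweber--Novikov operations, and more precisely every Landweber--Novikov operation applied to an element of the form $\rho(x')$ already lies in the $BP$-summand. Concretely, the total Landweber--Novikov operation $S^{Tot}_{L-N}$ on $\Omega^*$ and the total one $S^{Tot}_{BP}$ on $BP^*$ are compatible through $\rho$: the composite of $S^{Tot}_{L-N}$ with the projection to $BP^*[\ov t]$, restricted to the $BP$-summand, factors through $S^{Tot}_{BP}$ (with the appropriate relabelling of variables $b_i\rightsquigarrow t_i$). This is exactly the universality statement for $S^{Tot}_{BP}$ among stable multiplicative operations between $p$-typical theories, applied to $S^{Tot}_{L-N}\circ(\text{inclusion of }BP^*)$. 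Hence the image in $Q^*[\ov t]$ of $S^{Tot}_{L-N}(\wt{x})$ equals, coefficient by coefficient, a $BP$-polynomial combination of the coordinates of $S^{Tot}_{Q}(\bar{\wt{x}})$, where $\bar{\wt{x}}\in Q^*(X)$ is the image of $\wt x$, i.e.\ $\bar{\wt x}=x$.

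**Main step.** Now invoke Proposition~\ref{L-N-Pm-Num-Gen}: the Landweber--Novikov operations descend to $Q^*_{Num}$, i.e.\ $S^{Tot}_{Q}$ carries numerically trivial classes to numerically trivial classes (in $Q^*[\ov t]$, meaning each coefficient is numerically trivial). Since $x$ is numerically trivial by hypothesis, every coefficient of $S^{Tot}_{Q}(x)$ is numerically trivial in $Q^*(X)$. By the previous paragraph, every coefficient of the projection of $S^{Tot}_{L-N}(\wt x)$ to $Q^*[\ov t]$ is a $BP$-linear (in fact $Q$-linear) combination of these, hence numerically trivial — using here that numerically trivial classes form an ideal (Proposition~\ref{A-anis-elt-properties}, or just directly: scalar multiples and sums of numerically trivial classes are numerically trivial). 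That is precisely the assertion: the projection of $S^{Tot}_{L-N}(\wt x)$ to $Q^*$ (every graded/monomial component) is numerically trivial.

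**The obstacle.** The delicate point is the bookkeeping in the first paragraph: making precise that ``$S^{Tot}_{L-N}$ restricted to the $BP$-summand factors through $S^{Tot}_{BP}$.'' One must be careful that the projector $\rho$ does not commute with $S^{Tot}_{L-N}$ on all of $\Omega^*[\ov b]$ — it is only after projecting the target to $BP^*[\ov t]$ that things align, and one needs $\rho\circ\rho=\rho$ together with the universal property (diagram~(\ref{BP-ops-univ})) to pin down the resulting operation as $S^{Tot}_{BP}$ with the substitution $\psi(b_i)$ given by the $BP$-logarithm-to-$p$-typical-logarithm change of coordinates. A clean way to avoid fiddling with individual coefficients is to argue entirely at the level of total operations: apply the universal square~(\ref{BP-ops-univ}) with $A^*=BP^*$, $B^*=BP^*$, and $G=S^{Tot}_{L-N}$ precomposed with the canonical morphism $BP^*\row\Omega^*_{\zz_{(p)}}$ is wrong as stated (there is no such morphism), so instead one uses that $\theta_{BP}:\Omega^*_{\zz_{(p)}}\to BP^*$ is a \emph{morphism of theories} splitting the inclusion, sets $\wt x=\theta_{BP}$-image lifted back via the splitting, and notes $S^{Tot}_{L-N}$ followed by $\theta_{BP}[\ov b]$ (sending $b_i$ appropriately) equals $S^{Tot}_{BP}\circ\theta_{BP}$ by universality of $S^{Tot}_{BP}$ — this identity of operations out of $\Omega^*_{\zz_{(p)}}$, evaluated on $\wt x$ which lies in the $BP$-summand, gives the claim. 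Once this identification is in hand, the rest is the one-line argument with Proposition~\ref{L-N-Pm-Num-Gen} and the ideal property.
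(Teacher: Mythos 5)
Your opening paragraph captures the paper's argument accurately: define $\wt{x}=\rho(\ov{x})$ for the multiplicative projector $\rho$, observe that the composite $\mu[\ov{b}]\circ S^{Tot}_{L-N}\circ\eta$ is a multiplicative operation between $p$-typical theories, apply the universality of $S^{Tot}_{BP}$ (diagram~(\ref{BP-ops-univ})) to factor it as $\eps\circ S^{Tot}_{BP}$, and then conclude from Proposition~\ref{L-N-Pm-Num-Gen}. This is precisely the paper's $\delta=\mu[\ov b]\circ S^{Tot}_{L-N}\circ\eta$, and the step ``individual $\delta$-operations are $BP$-linear combinations of individual $BP$-Landweber--Novikov operations'' is the crux.

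However, in your ``obstacle'' paragraph you talk yourself out of this correct route and propose a broken fix. First, you reject the precomposition ``$S^{Tot}_{L-N}\circ(\text{inclusion of }BP^*)$'' on the grounds that ``there is no such morphism.'' But there is a perfectly good \emph{multiplicative operation} $\eta:BP^*\to\Omega^*_{\zz_{(p)}}$, the second factor in $\rho=\eta\circ\mu$ (the paper extracts it from \cite[Prop.\ 4.9(2)]{SU}); it is not a morphism of theories (it does not commute with push-forwards), but the universality of $S^{Tot}_{BP}$ applies to \emph{multiplicative operations} between $p$-typical theories, so this is no obstruction at all. Second, the replacement you suggest — the identity $\psi\circ S^{Tot}_{L-N}=S^{Tot}_{BP}\circ\theta_{BP}$ — is an instance of the universality of the \emph{cobordism} Landweber--Novikov operation, not of $S^{Tot}_{BP}$ (which is only universal among $p$-typical theories, and $\Omega^*$ is not one). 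More seriously, this identity has the specialisation going the wrong way: it expresses $S^{Tot}_{BP}\circ\theta_{BP}$ as $\psi\circ S^{Tot}_{L-N}$, with $\psi:\Omega^*_{\zz_{(p)}}[\ov b]\to BP^*[\ov t]$ collapsing the $b_i$'s onto polynomials in the $t_j$'s. Since $\psi$ is far from injective on the $\ov b$-part (for $p>3$ there are no $t$-monomials in degree $2$, for instance), knowing $S^{Tot}_{BP}(\mu(\wt x))$ is numerically trivial does not let you conclude that every coefficient $S^{b^{\vec r}}_{L-N}(\wt x)$ projects to something numerically trivial. You need $\delta$ to be a specialisation \emph{of} $S^{Tot}_{BP}$, i.e.\ a map $\eps:BP^*[\ov t]\to BP^*[\ov b]$ with $\delta=\eps\circ S^{Tot}_{BP}$, and this is exactly what precomposing with $\eta$ (so that the source becomes the $p$-typical $BP^*$) produces. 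Your original instinct was correct; the retreat introduces a genuine gap.

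Finally, your argument ends with $\wt x\in\Omega^*_{\zz_{(p)}}(X)$, but the Proposition asserts $\wt x\in\Omega^*(X)$ (integral, hence representable by $[Y\to X]$ with $Y$ smooth projective — this geometric representability is what the Main Theorem actually uses). The paper closes this gap by noting that, $J$ being a non-zero invariant ideal, $p^r\in J$ for some $r$, so one may scale $\wt x$ by an integer $q\equiv 1\pmod{p^r}$ to clear denominators without changing the image in $Q^*$.
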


\begin{proof}
 Let $\rho:\Omega^*_{\zz_{(p)}}\row\Omega^*_{\zz_{(p)}}$ be the multiplicative projector defining the theory $BP^*$. 
 It corresponds to the endomorphism $(\ffi_{\rho},\gamma_{\rho})$ of the ($\zz_{(p)}$-localised) universal FGL - see \cite[Theorem 6.9]{SU}.  
 It can be 
 decomposed as $\Omega^*_{\zz_{(p)}}\stackrel{\mu}{\row}BP^*\stackrel{\eta}{\row}\Omega^*_{\zz_{(p)}}$, where $\mu$ is the canonical morphism of theories (so, the respective morphism of FGLs has the form
 $(pr,x)$) and $\eta$ is the multiplicative operation corresponding
 to the morphism $(em,\gamma_{\rho})$, where $\ffi_{\rho}$ decomposes as
 $\laz_{\zz_{(p)}}\stackrel{pr}{\row}BP\stackrel{em}{\row}\laz_{\zz_{(p)}}$ - see \cite[the proof of Prop. 4.9(2)]{SU}.
 Moreover $\mu\circ\eta=id_{BP^*}$. Thus, $BP^*$ can be realised as a
 quotient of $\Omega^*_{\zz_{(p)}}$, or as a direct summand of
 the {\it reorientation} (as in Panin-Smirnov \cite{PS}) $(\Omega^{\gamma_{\rho}}_{\zz_{(p)}})^*$ of this theory. 
 Consider the composition:
 $$
 BP^*\stackrel{\eta}{\row}\Omega^*_{\zz_{(p)}}\stackrel{S^{Tot}_{L-N}}{\lrow}\Omega^*_{\zz_{(p)}}[\ov{b}]\stackrel{\mu[\ov{b}]}{\lrow}BP^*[\ov{b}].
 $$
 It is a multiplicative operation $\delta: BP^*\row BP^*[\ov{b}]$ between $p$-typical theories and so, 
 by (\ref{BP-ops-univ}), 
 is a specialization of 
 $S^{Tot}_{BP}:BP^*\row BP^*[\ov{t}]$. That is, there exists a
 morphism of theories $\eps: BP^*[\ov{t}]\row BP^*[\ov{b}]$ (which is
 automatically $BP^*$-linear), such that
 $\delta=\eps\circ S^{Tot}_{BP}$. In other words, individual $\delta$-operations (coefficients of $\delta$ at particular monomials in 
 $\ov{b}$) are $BP$-linear combinations of the individual 
 $BP$-Landweber-Novikov operations $S^{t^{\vec{r}}}_{BP}$. 
 In turn, it is easy to see (from universality of the cobordism operation $S^{Tot}_{L-N}$) that $S^{Tot}_{BP}$ is a specialization of $\delta$ (and so, these operations are equivalent), but we will not use it.
 
 Let $\ov{x}\in\Omega^*_{\zz_{(p)}}$ be any lifting of $x\in Q^*$,
 and $\wt{x}=\rho(\ov{x})\in\Omega^*_{\zz_{(p)}}$. Then
 $$
\mu[\ov{b}]\circ S^{Tot}_{L-N}(\wt{x})=\mu[\ov{b}]\circ S^{Tot}_{L-N}\circ\eta\circ\mu(\ov{x})=\delta\circ\mu(\ov{x}). 
 $$
We know that the projection of $\mu(\ov{x})$ to $Q^*$ is numerically trivial and that $\delta$ is a specialization of $S^{Tot}_{BP}$. 
 From Proposition \ref{L-N-Pm-Num-Gen}, the projection of $\delta\circ\mu(\ov{x})$ to $Q^*$ is numerically trivial. This means 
 that the projection of $S^{Tot}_{L-N}(\wt{x})$ to $Q^*$ is
 numerically trivial. It remains to recall that $\mu\circ\eta=id_{BP^*}$.
 Hence, the projection of $\wt{x}$ to $Q^*$ 
 is equal to that of $\mu\circ\eta\circ\mu(\ov{x})=\mu(\ov{x})$ and so,
 coincides with $x$.
 Finally, since $J$ is non-zero and invariant, it contains $p^r$, for some $r$ (note that, by the results of Landweber \cite{La73b}, the radical of this ideal is some Landweber ideal $I(m)$, for $m>0$). Then multiplying $\wt{x}$ by an appropriate integer $q\equiv 1\,(mod\,p^r)$, we may assume that the class $\wt{x}$ is integral, and so, represented by some $[Y\stackrel{y}{\row}X]$ with $Y$-smooth projective. 
 \Qed
\end{proof}

\begin{remark}
 \label{killing-SLN-Gen-rem}
 It applies, in particular, to the theories $P(m)$, for
 any $1\leq m\leq\infty$.
 When $J=0$, that is, $Q^*=BP^*$, we may still represent $x$ by
 $\wt{x}\in\Omega_{\zz_{(p)}}^*(X)$ with the result of Landweber-Novikov operations on it $BP$-numerically trivial, but it will not be represented by an integral class, in general.
 \Red
\end{remark}

Finally, we will need the following result showing that numerically 
trivial complete intersections are anisotropic (over a flexible field).

\begin{proposition}
 \label{anis-compl-int}
 Let $A^*$ be a free oriented cohomology theory, $k$ be flexible and 
 $X/k$
 be smooth projective variety. Let $u=\prod_{i=1}^nx_i\in A^*(X)$, where $x_i=c_1^A(L_i)$, for very ample line bundles $L_i$ on $X$. Suppose,
 $u$ is numerically trivial. Then it is anisotropic.
\end{proposition}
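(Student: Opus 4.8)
\emph{Proof plan.}
The plan is to realise $u$ as the class of a generic smooth complete intersection $Y\stackrel{\iota}{\row}X$ and then to show that $Y$ itself is $A$-anisotropic; numerical triviality of $u$ is exactly the input that makes this work.

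Since the $L_i$ are very ample and $k$, being flexible, is infinite of infinite transcendence degree, I would first apply Bertini to choose sections $\sigma_i\in\HH^0(X,L_i)$ that are generic over the (finitely generated) field of definition of $(X,L_i,u)$, so that $Y:=\bigcap_{i=1}^{n}Z(\sigma_i)$ is smooth of codimension $n$, with $\iota_*(1_Y)=\prod_ic_1^A(L_i)=u$, and so that $Y$ is, up to isomorphism of base fields, the \emph{generic} complete intersection of members of the systems $|L_i|$. By Observation~\ref{A-anis-var-obsO} it then suffices to prove $(\pi_Y)_*\colon A_*(Y)\row A$ is zero, as $u=\iota_*(1_Y)$ would then be anisotropic in the sense of Definition~\ref{A-anis-elt}. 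The hypothesis feeds in as follows: for every $z\in A^*(X)$ one has $(\pi_Y)_*(\iota^*z)=\pi_*\bigl(\iota_*(1_Y)\cdot z\bigr)=\pi_*(u\cdot z)=0$, so $(\pi_Y)_*$ kills $\iota^*A^*(X)$; and by the projection formula $(\pi_Y)_*(\iota^*z\cdot w)=\pi_*\bigl(z\cdot\iota_*w\bigr)$, so the whole statement reduces to knowing that, for the generic complete intersection, $\iota^*A^*(X)$ already detects $(\pi_Y)_*$ --- equivalently, that the pull-backs from $X$ are numerically trivial on $Y$ and span $A^*_{Num}(Y)$ together with a part numerically orthogonal to them.

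The geometric heart --- and the step I expect to be the main obstacle --- is this control of the cycle structure of a generic complete intersection. The cleanest case $\ddim Y=0$ already illustrates it: by the full-symmetric-group monodromy (uniform position) of generic linear sections, such $Y$ is a single point $\op{Spec}(E)$ with $[E:k]=\ddeg Y$; $(\pi_Y)_*$ is multiplication by $\ddeg Y$; and numerical triviality of $u$ (take $z=c_1^A(L'')^{\ddim X-n}$ for a very ample $L''$ on $X$, or $z=1$ when $n=\ddim X$) forces $\ddeg Y\cdot 1_A=0$, whence $\ddeg Y\cdot A=0$ because $A$ is an $A^0$-algebra, so $Y$ is $A$-anisotropic. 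For $\ddim Y>0$ I would run the relative version of this over the universal family of complete intersections parametrised by $\prod_i|L_i|$: the monodromy of the family is again as large as possible, so the generic member carries no numerical class beyond those forced by $X$ and the primitive part, the latter being numerically orthogonal to $\iota^*A^*(X)$; together with the vanishing of $(\pi_Y)_*$ on $\iota^*A^*(X)$ this gives $(\pi_Y)_*=0$. Making the ``no extra numerical classes'' assertion precise --- and proving it for an arbitrary free theory $A^*$, not just for Chow groups $\Ch^*$ --- is where the flexibility of $k$ is indispensable and where the real work lies. (When $A$ is torsion-free the statement is vacuous, since then a complete intersection of very ample divisors is never numerically trivial unless it is empty, so one may assume $n\cdot A=0$ for some $n$.)
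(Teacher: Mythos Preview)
Your overall shape is right: pass to a generic complete intersection $Y\stackrel{\iota}{\hookrightarrow}X_E$, show it is $A$-anisotropic, and use flexibility to descend. You also correctly observe that $(\pi_Y)_*$ vanishes on $\iota^*A^*(X_E)$ because $u$ is numerically trivial. The gap is the step you flag as ``where the real work lies'': controlling the classes on $Y$ not coming from $X$ via monodromy or a Lefschetz-type primitive decomposition. For an arbitrary free theory $A^*$ there is no such decomposition available, and you do not supply one; your proposal is really an outline with a hole at the decisive point.

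The paper avoids this entirely by proving something much stronger and much easier: for the \emph{generic} section, the restriction $\iota^*:A^*(X_E)\to A^*(Y)$ is \emph{surjective}. The reason is structural, not monodromy-theoretic. The incidence variety $\{(x,H):x\in H\}\subset X\times|L_i|$ is a projective bundle over $X$, so by the projective bundle axiom its $A^*$ is $A^*(X)[\xi]$; restricting to the generic fibre over $|L_i|$ kills all positive powers of $\xi$ (since $O(1)$ is pulled back from the base $|L_i|$), leaving a surjection $A^*(X_E)\twoheadrightarrow A^*(Y_\eta)$. Iterating over $i$ gives surjectivity for the full generic complete intersection $W$. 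Then the zero pairing $\langle u_E,-\rangle$ factors as $A^*(X_E)\stackrel{\iota^*}{\twoheadrightarrow}A^*(W)\stackrel{(\pi_W)_*}{\to}A$, and surjectivity of the first arrow forces $(\pi_W)_*=0$ outright. No primitive classes, no monodromy, no Bertini --- just (PB) and excision. Flexibility is then used exactly as you say, to identify $(X_E,u_E)$ with $(X,u)$.

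So the idea you are missing is precisely this surjectivity-from-the-universal-family argument; once you see it, your entire ``geometric heart'' paragraph becomes unnecessary.
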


\begin{proof}
 Let $Z$ be a smooth projective variety and $L=O(D)$ a very ample line bundle on it. The linear system $|D|$ of effective divisors linearly equivalent to $D$ is parametrized by a projective space $\pp^N$.
 These divisors are hyperplane sections of $X$ in the projective embedding given by $L$.
 Consider the subvariety $Y\subset Z\times\pp^N$ consisting of pairs
 $\{(z,H)|z\in H\}$. We have natural projections 
 $Z\stackrel{\pi}{\low}Y\stackrel{\beta}{\row}\pp^N$, where $\pi$ is a $\pp^{N-1}$-bundle (note that, for a given $z$, the condition $z\in H$ gives a single linear equation on $H$). Let $\xi=c_1^A(O(1))$ on it. 
 Let $\eta$ be the generic point of $\pp^N$, and $Y_{\eta}$ be the 
 generic fiber of the projection $\beta$. It is the {\it generic} 
 representative of the linear system $|D|$ defined over the purely
 transcendental extension $F=k(\pp^N)$ of the base field.
 By the projective bundle axiom (PB) \cite[Def. 1.1.2]{LM} and 
 by excision (EXCI) and \cite[Def. 2.6, Cor. 2.13]{RNCT}, we have surjections:
 $$
 A^*(Z)[\xi]\twoheadrightarrow A^*(Y)\twoheadrightarrow A^*(Y_{\eta}).
 $$
 Here $O(1)$ on $Y$ is the restriction of the bundle $O(1)$ on $\pp^N$.
 Thus, positive powers of $\xi$ are supported in positive codimension on $\pp^N$ and so, restrict to zero in $A^*(Y_{\eta})$. Thus, we obtain
 the surjection $A^*(Z)=A^*(Z_F)\stackrel{j^*}{\twoheadrightarrow} A^*(Y_{\eta})$ for the embedding $j:Y_{\eta}\row Z_F$ of the generic section.
 
 Let $L_i=O(D_i)$ and $\pp^{N_i}=|D_i|$ be the respective ample linear
 systems. Let $\pp=\prod_i\pp^{N_i}$, $\eta$ be the generic point 
 of $\pp$ and $j:W\hookrightarrow X_E$ be the intersection of the generic representatives
 of our linear systems. It is defined over a purely transcendental
 extension $E=k(\pp)$ of $k$.
 Applying the above argument inductively, we obtain that the restriction 
 $A^*(X)=A^*(X_E)\stackrel{j^*}{\twoheadrightarrow}A^*(W)$ is surjective.
 Since $u_E=[W\row X_E]^A$ is numerically trivial, this shows that all
 elements of $A^*(W)$ are numerically trivial.
 Indeed, the (zero) pairing $\la u_E,-\ra: A^*(X_E)\row A$ is just the composition 
 $$
 A^*(X_E)\stackrel{j^*}{\twoheadrightarrow}A^*(W)\stackrel{pr_*}{\row} A^*(\op{Spec}(E))=A.
 $$ 
 This means that $W$ is 
 anisotropic. Thus, $u$ becomes anisotropic over some purely transcendental extension $E=k(\pp)$. Since $k$ is {\it flexible}, that is, 
 $k=k_0(t_1,t_2,\ldots)$ and the variety $X$ and the class $u$ are defined
 on some finite level: over some $k_n=k_0(t_1,\ldots,t_n)$,
 there is an isomorphism of field extensions $k/k_n$ with 
 $E/k_n$, which identifies $X$ with $X_E$ and $u$ with $u_E$ - see \cite[Proposition 1.3]{Iso}.
 It follows that $u$ is anisotropic already over $k$.
 \Qed
\end{proof}

\begin{remark}
 \label{compl-int-rem}
  Note that here we don't have any restrictions on the free theory $A^*$. In particular, we see that if $A^*$ is not torsion, then complete intersections can't be numerically trivial (since there are no non-zero anisotropic classes, in this case). 
 The same result holds for any quotients of free theories.
 \Red
\end{remark}

Now, we are ready to prove our Main Theorem.

\begin{theorem}
 \label{Main}
 Let $k$ be flexible and $1\leq m\leq\infty$.
 Then $P(m)^*_{iso}=P(m)^*_{Num}$. 
\end{theorem}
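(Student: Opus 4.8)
The plan is to prove the more general statement: for $Q=BP/J$ with $J\subset BP$ any \emph{non-zero} ideal invariant under Landweber-Novikov operations and $Q^*=BP^*\otimes_{BP}Q$, every numerically trivial class $u\in Q^*(X)$ on a smooth projective variety $X$ over the flexible field $k$ is $Q$-anisotropic. Since anisotropic classes are always numerically trivial, this gives $Q^*_{iso}=Q^*_{Num}$, and the theorem follows by taking $J=I(m)=(v_0,\ldots,v_{m-1})$, which is non-zero because $1\leq m$.

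First I would apply Proposition \ref{killing-SLN-Gen} to lift $u$ to $\wt{x}\in\Omega^*(X)$ (along $\Omega^*(X)\row BP^*(X)\row Q^*(X)$) in such a way that the projection of $S^{Tot}_{L-N}(\wt{x})$ to $Q^*$ is again numerically trivial; this relies on Proposition \ref{L-N-Pm-Num-Gen}, i.e. on the $BP$-Landweber-Novikov operations respecting $Q$-numerical equivalence, which in turn comes from the multiplicative projector defining $BP^*$ together with Panin's Riemann-Roch theorem. After multiplying by an integer congruent to $1$ modulo a power of $p$, $\wt{x}$ is represented by a projective map $[Y\stackrel{y}{\row}X]$ with $Y$ smooth of some dimension $d$, and the conclusion of the first step becomes: every polynomial in the Chern classes of $T_Y$ is $Q$-numerically trivial on $X$.

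Next I would make $u$ into a regular embedding with controlled normal bundle, and then a complete intersection. Twisting $T_Y$ by a sufficiently high $p$-primary power of a very ample line bundle --- a $Q$-equivalence --- makes $T_Y$ globally generated, hence induces $f\colon Y\row Gr(d,N)$; replacing $y$ by the regular embedding $(y,f)\colon Y\hookrightarrow X\times Gr(d,N)$ yields a class still projecting to $u$ and still $Q$-numerically trivial (because the restriction of $Q^*(X\times Gr(d,N))$ to $Y$ is generated over $Q^*(X)$ by the Chern classes of $T_Y$, numerically trivial by the first step), whose normal bundle is $Q$-equivalent to a bundle pulled back from $X$; passing to a suitable flag bundle over $X$ reduces to the case where the $K_0$-class of the normal bundle is a sum $[L_1]+\cdots+[L_n]$ of line bundles defined on $X$, $n=\op{codim}$. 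Then I would invoke the deformation to the normal cone: replacing $X$ by $W=Bl_{X\times\pp^1}(Y\times 0)$ and $u$ by the pullback of $-[Y\times 0]$ gives a class which is still $Q$-numerically trivial, pushes forward to $-u$, and --- since the $Q$-roots of $N_{Y\times 0\subset X\times\pp^1}$ are defined on $X$ with a zero among them --- equals $\xi(\xi+_Qa_1)\cdots(\xi+_Qa_n)$ with $\xi=c_1^Q(O(1))$ and $a_i=c_1^Q(L_i)$. A final $p$-primary twist makes the $L_i$ very ample, so $u$ is a $Q$-numerically trivial complete intersection, which is anisotropic by Proposition \ref{anis-compl-int}; hence $u$ is anisotropic.

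I expect the routine ingredients --- the projector lifting, the Landweber-Novikov descent to $Q$-numerical equivalence, and the anisotropy of numerically trivial complete intersections --- to be the ones already prepared above, so the main obstacle is the middle reductions: the flag-bundle manoeuvre that linearises the $K_0$-class of the normal bundle with all line bundles living on $X$, and the somewhat unorthodox use of the deformation to the normal cone to simultaneously present the class as a complete intersection and retain a zero among its $Q$-roots, all while verifying that numerical triviality is preserved at each stage.
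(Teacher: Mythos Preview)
Your proposal is correct and follows essentially the same route as the paper's proof of Theorem \ref{Main-Gen}: the lifting via Proposition \ref{killing-SLN-Gen}, the Grassmannian embedding, the flag reduction, the deformation to the normal cone, and the appeal to Proposition \ref{anis-compl-int} all appear in the same order with the same purpose. The only step you gloss over is that after the Grassmannian embedding the normal bundle is $Q$-equivalent to a \emph{virtual} class $y^*([V]-[U])$, and the paper inserts an extra replacement $X\rightsquigarrow\pp_X(O\oplus U)$ to absorb the negative part before passing to the flag variety; this is precisely the ``flag-bundle manoeuvre'' you flag as the main remaining obstacle.
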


Again, we will prove a more general version.

\begin{theorem}
 \label{Main-Gen}
 Let $k$ be flexible, $J\subset BP$ be a non-zero invariant ideal
 and $Q^*=BP^*\otimes_{BP}BP/J$.
 Then $Q^*_{iso}=Q^*_{Num}$. 
\end{theorem}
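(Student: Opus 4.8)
The plan is to carry out the four-step reduction described in the strategy subsection, treating Proposition~\ref{killing-SLN-Gen} (Step~1) and Proposition~\ref{anis-compl-int} (Step~4) as already available, so that what remains is the geometric passage between them. Since the surjection $Q^*_{iso}\twoheadrightarrow Q^*_{Num}$ is automatic, it suffices to show that an arbitrary numerically trivial $u\in Q^*(X)$, with $X/k$ smooth projective and $k$ flexible, is $Q$-anisotropic. First I would invoke Proposition~\ref{killing-SLN-Gen}: as $J\neq 0$ is invariant, $u$ lifts to $\wt{x}\in\Omega^*(X)$ represented by a projective morphism $[Y\stackrel{y}{\row}X]$ with $Y$ smooth of some dimension $d$ and with the projection of $S^{Tot}_{L-N}(\wt{x})$ to $Q^*$ numerically trivial. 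Spelled out on generators, this says that for every polynomial $P$ in the Chern classes of $T_Y$ the pushforward $y_*(P)\in Q^*(X)$ is numerically trivial; equivalently, every such $P$ pairs to zero with all of $Q^*(X)$ after restriction to $Y$.

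Step~2 turns $u$ into a regular embedding with split normal bundle. Since $Q=BP/J$ is killed by some power of $p$, the $p^N$-series $[p^N]_{F_Q}$ of its formal group law has lowest-degree term of degree growing with $N$; hence for $N\gg 0$ tensoring $T_Y$ with a high $p$-primary power $M^{\otimes p^N}$ of a very ample line bundle on $Y$ does not change its $Q$-Chern classes (the discrepancy, a value of $[p^N]_{F_Q}$ on $Q$-roots, is supported in codimension $>d$ on $Y$) while making it globally generated, hence defining an embedding $f\colon Y\hookrightarrow Gr(d,N)$ along which the tautological bundle pulls back to $T_Y\otimes M^{\otimes p^N}$. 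I would then replace $u$ by $v=[Y\stackrel{(y,f)}{\lrow}X\times Gr(d,N)]$, a regular embedding with $(pr_X)_*v=u$. It remains numerically trivial because the image of $Q^*(X\times Gr(d,N))$ in $Q^*(Y)$, as an algebra over $Q^*(X)$, is generated by the $Q$-Chern classes of $T_Y$ (projective bundle formula and the cellular structure of the Grassmannian, together with the above identification of the tautological pullback), so $v$ pairs to zero against all of $Q^*(X\times Gr(d,N))$ by Step~1. Moreover, in $K_0$ modulo $Q$-equivalence the normal bundle of $(y,f)$ is built from $pr_X^*T_X$ and the tautological bundle; replacing $Gr(d,N)$ by the associated full flag bundle (again cellular, so the above persists) splits that tautological bundle into line bundles, and one reaches the situation of a $Q$-numerically trivial regular embedding $u=[Y\stackrel{y}{\row}X]$ of codimension $n$ whose normal bundle is $Q$-equivalent to the restriction of $[L_1]+\dots+[L_n]$ for line bundles $L_i$ on $X$.

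Step~3 is the deformation to the normal cone, used here to produce a complete intersection. Put $W=Bl_{X\times\pp^1}(Y\times 0)$, with $q\colon W\row X\times\pp^1\row X$, and take on $W$ the pullback of $-[Y\times 0]$. This class is $Q$-numerically trivial (pullback along the projective birational map $W\row X\times\pp^1$ of the numerically trivial class $-[Y\times 0]$) and $q_*$ of it is $-u$. Now the normal bundle of $Y\times 0$ in $X\times\pp^1$ is $N_{Y/X}\oplus O_Y$, so its $Q$-roots are $a_1=c^Q_1(L_1),\dots,a_n=c^Q_1(L_n)$ (all pulled back from $X$, by Step~2) together with a single zero; and the blow-up geometry identifies the pulled-back class of $Y\times 0$ with the product $\xi\cdot(\xi+_Q a_1)\cdot\ldots\cdot(\xi+_Q a_n)$ of first $Q$-Chern classes, where $\xi=c_1^Q(O(1))$ for the relative $O(1)$ of the exceptional bundle and the leading factor $\xi$ records the zero root. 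Twisting $O(1)$ and each $L_i$ by sufficiently high $p$-primary powers of ample line bundles — a $Q$-equivalence by the same dimension count — makes $\xi$ and every $\xi+_Q a_i$ a first $Q$-Chern class of a very ample line bundle. Thus $-u$, viewed on $W$, is a $Q$-numerically trivial complete intersection of exactly the shape handled by Proposition~\ref{anis-compl-int}; hence it is $Q$-anisotropic, and pushing forward along $W\row X$ and using that anisotropic classes are stable under pushforward (Proposition~\ref{A-anis-elt-properties}) shows $-u$, and therefore $u$, is anisotropic in $Q^*(X)$. Taking $J=I(m)$ for $1\le m\le\infty$ then yields Theorem~\ref{Main}.

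The step I expect to be the main obstacle is Step~3: making the deformation to the normal cone literally output a complete intersection requires a careful analysis of $W=Bl_{X\times\pp^1}(Y\times 0)$ and of the class of $Y\times 0$ pulled back there, and in particular pinning down that exactly one $Q$-root of $N_{Y\times 0\subset X\times\pp^1}$ is $0$ while the others descend from $X$ — which is precisely what Steps~1 and~2 were arranged to guarantee. A subsidiary difficulty is the bookkeeping in Step~2 ensuring that, after the flag-bundle construction, the $K_0$-class of the normal bundle really becomes a sum of line bundles defined on the base, while keeping the embedding regular and the class $Q$-numerically trivial.
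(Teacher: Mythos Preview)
Your plan follows the paper's approach closely and is correct in its broad strokes, but there is one concrete missing step in your Step~2 that is more than the ``bookkeeping'' you flag at the end. After the Grassmannian embedding, the normal bundle of $g=(y,f)$ is $Q$-equivalent not to a genuine vector bundle pulled back from the base but only to a \emph{virtual} class: in $K_0$ one has $[N_g]$ $Q$-equivalent to $g^*\big([T_{X\times Gr(d,N)}]-[Tav]\big)$, a difference $[V]-[U]$. Passing to the flag bundle of $Tav$ splits $Tav$ into line bundles, but it does nothing for $T_X$ or $T_{Gr}$, and in any event still leaves you with a virtual class rather than a sum $[L_1]+\dots+[L_n]$ of actual line bundles. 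The blow-up formula you need in Step~3 (Lemma~\ref{MT-L1}) requires the normal bundle to be $Q$-equivalent to an honest vector bundle $M$ on the base, so that one can form $c_d^Q(\wt{M})-c_d^Q(\wt{M}\otimes O(1))$ and then factor into first Chern classes.

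The paper fills this gap by a short extra move: having $N_{Y\subset X}$ $Q$-equivalent to $y^*([V]-[U])$ (after renaming $X\times Gr$ to $X$), one embeds $X=\pp_X(O)$ into $\pp_X(O\oplus U)$. The normal bundle of this zero-section is $U$, so the composite $Y\to X\to\pp_X(O\oplus U)$ has normal bundle $Q$-equivalent to $y^*(V)$, an actual vector bundle of the correct rank. Only then does one pass to the flag variety $Fl_X(V)$ over the new base (not the flag bundle of $Tav$) to split $V$ into line bundles. With this correction your Step~2 lands exactly where you want it, and the rest of your outline matches the paper.
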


\begin{proof} 
We need to show that all numerically trivial classes in $Q^*$ are
anisotropic.
Suppose, $X/k$ is smooth projective and $x\in Q^*(X)$ is numerically trivial. By Proposition \ref{killing-SLN-Gen}, we may represent $x$ by the
class $u=[Y\stackrel{y}{\row}X]\in\Omega^*(X)$, such that 
$S^{Tot}_{L-N}(u)\in\Omega^*(X)[\ov{b}]$ is numerically trivial in $Q^*(X)$.

The operation $S^{Tot}_{L-N}$ commutes with pull-backs. 
The value of it on $u$ is given by 
$$
S^{Tot}_{L-N}(u)=y_*(\op{Td}(Y))\cdot\op{Td}(X)^{-1}, 
$$
where 
$\op{Td}(Z)=\op{Td}_{S^{Tot}_{L-N}}(Z)$ is equal to $\prod_j\left(\frac{x}{\gamma(x)}\right)(\lambda_j)$, where $\lambda_j$ runs over $\Omega^*$-roots of the
tangent bundle $T_Z$ of $Z$, and $\gamma=\gamma_{S^{Tot}_{L-N}}$ is the ``generic'' stable power series $x+b_1x^2+b_2x^3+\ldots$.
 - see Section \ref{subs-three-one}, \cite[Theorem 2.5.3]{P-RR} and \cite[Example 4.1.25]{LM}.
There is the homological version $S^{L-N}_{Tot}$ of it, which on $X$ is equal to
$S^{Tot}_{L-N}\cdot\op{Td}(X)$.  The value of it on $u$ is equal to 
$$
S^{L-N}_{Tot}(u)=y_*(\op{Td}(Y)).
$$
Due to the Riemann-Roch theorem for multiplicative operations - Panin \cite[Theorem 2.5.4]{P-RR},
$S^{L-N}_{Tot}$ commutes with (proper) push-forwards (which is obvious from the above presentation).
Since $S^{Tot}_{L-N}(u)$ is $Q^*$-numerically trivial, so is
$S^{L-N}_{Tot}(u)=S^{Tot}_{L-N}(u)\cdot\op{Td}(X)$. 

The individual homological Landweber-Novikov operations 
$S^{L-N}_{b^{\vec{r}}}(u)$ are just ($y_*$ of the) polynomials in various Chern classes
$c_{s}^{\Omega}(-T_Y)$ of the minus tangent bundle of $Y$. More precisely, $S^{L-N}_{b^{\vec{r}}}(u)$ is the coefficient
at the monomial $b^{\vec{r}}$ in $y_*(\op{Td}(Y))$. 
Note that, this way, we get all possible polynomials in the
Chern classes of $-T_Y$, or which is the same, all possible polynomials in the Chern classes of $T_Y$.
Since all these classes
are $Q^*$-numerically trivial on $X$, arbitrary polynomials in the Chern
classes of the tangent bundle of $Y$ are $Q^*$-numerically 
trivial on $X$.

Suppose, $\ddim(Y)=d$.
Let $L$ be a very ample line bundle on $Y$. Then, for sufficiently large
$k$, $T_Y\otimes L^{p^k}$ is generated by global sections and 
defines a regular embedding $f:Y\row Gr(d,N)$ into the Grassmanian of $d$-dimensional subspaces of some vector space, such that 
$T_Y\otimes L^{p^k}=f^*(Tav)$ is the pull-back of the
tautological vector bundle.

Thus, our class $u\in\Omega^*(X)$ is the push-forward of the class of the
regular embedding $g=(y,f)$: 
$$
v=[Y\stackrel{g}{\lrow}X\times Gr(d,N)]\in\Omega^*(X\times Gr(d,N)).
$$ 
Note that $Q^*(X\times Gr(d,N))$
as a $Q^*(X)$-algebra is generated by the Chern classes $c_i^{Q}(Tav)$ (see \cite[Example 14.6.6]{Fu}) and the pairing $\la v^Q,-\ra:Q^*(X\times Gr(d,N))\row Q$ is given by
\begin{equation}
\label{vQ-pairing}
Q^*(X\times Gr(d,N))\stackrel{g^*}{\lrow}Q^*(Y)
\stackrel{pr_*}{\lrow}Q^*(\op{Spec}(k))=Q.
\end{equation}
By Lemma \ref{MT-L0}, for $k$ sufficiently large in comparison to the $\ddim(Y)=d$, the first Chern 
class $c_1^{Q}(L^{p^k})$ is zero (in other words, $L^{p^k}$ is {\it $Q$-equivalent} to the trivial bundle $O$). As {\it $Q$-equivalence} is respected by the $\otimes$ (as Chern classes of the tensor product are expressible in terms of
Chern classes of factors), the Chern classes of the pull-back $f^*(Tav)$, for sufficiently large $k$, are just the Chern classes of $T_Y$. But we know that any polynomial in the latter Chern classes is $Q^*$-numerically trivial on
$X$. Hence, for such $k$, the map (\ref{vQ-pairing})
is zero, and so,
the class $v\in\Omega^*(X\otimes Gr(d,N))$
is $Q^*$-numerically trivial.

Note that the normal bundle of $g$ is $Q$-equivalent to
$g^*([T_{X\times Gr(d,N)}]-[Tav])$. Thus, 
replacing $X$ by $X\times Gr(d,N)$ and $u$ by $v$, we reduce to the case,
where $u=[Y\stackrel{y}{\row}X]$ is a class of a regular embedding and the normal bundle $N_{Y\subset X}$ is
{\it $Q$-equivalent} to the pull-back $y^*([V]-[U])$, for some vector bundles $V$ and $U$ on $X$.  By construction, 
$\ddim(N_{Y\subset X})=\ddim([V]-[U])$. 
Since the normal bundle of $X=\pp_X(O)\row\pp_X(O\oplus U)$
is $U$,
replacing $X$ by $\pp_X(O\oplus U)$ and $Y\stackrel{y}{\row}X$ by $Y\stackrel{y}{\row}X\row\pp_X(O\oplus U)$ (note that $Q^*$-numerical triviality of these elements is equivalent, and the same applies to $Q^*$-anisotropy, as each can be obtained from the other by push-forwards),
we can assume that $N_{Y\subset X}$ is {\it $Q$-equivalent}
to the pull-back $y^*(V)$ of a vector bundle of the same dimension.

Let $\eps: Fl_X(V)\row X$ be the variety of complete flags
on $V$. Replacing $X$ by $Fl_X(V)$ and $Y$ by $\eps^{-1}(Y)$, we may 
assume that $[V]=[L_1]+\ldots+[L_n]\in K_0(X)$, for some line bundles $L_i$. Let $c_1^Q(L_i)=a_i$. Note that anisotropy of $[y]$ is equivalent to the anisotropy of $[\eps^*(y)]$,
since anisotropic classes are stable under pull-backs, push-forwards
and multiplication by any elements by Proposition 
\ref{A-anis-elt-properties} (and $[y]$ can be obtained from
$[\eps^*(y)]$ by the latter two operations - recall, that 
$\eps$ is a consecutive projective bundle). Similarly, $Q^*$-numerical triviality of $[y]$ implies that of $[\eps^*(y)]$.

Let's apply the deformation to the normal cone construction. Consider
$W=Bl_{X\times\pp^1}(Y\times 0)$ with the projection
$\pi:W\row X\times\pp^1$. The class
$\ov{u}=[Y\times 1]=[Y\times 0]\in Q^*(X\times\pp^1)$ is still 
numerically trivial (since $[Y]\in Q^*(X)$ is). Hence, so is
$\pi^*(\ov{u})$. Since the normal bundle $N_{Y\times 0\subset X\times\pp^1}$
is $O\oplus N_{Y\subset X}$, which has $Q$-roots $0,a_1,\ldots,a_n$, all defined on $X$, by \cite[Proposition 5.27]{so2}, 
$$
-\pi^*(\ov{u})=\xi(\xi+_{Q}a_1)\cdot\ldots\cdot(\xi+_{Q}a_n)-
0\cdot a_1\cdot\ldots\cdot a_n=\xi(\xi+_{Q}a_1)\cdot\ldots\cdot(\xi+_{Q}a_n),
$$
where $\xi=c_1^{Q}(O(1))$ - see Lemma \ref{MT-L1} for details. Thus, it is a product of first Chern classes of
certain line bundles.
Note that any line bundle $L$ is a ratio of some very ample line bundles $K\otimes M^{-1}=(K\otimes M^{p^k-1})\otimes M^{-p^k}$.
Thus, tensoring by $M^{p^k}$, for a very ample line bundle $M$ on $W$, which is a {\it $Q$-equivalence}
for sufficiently large $k$, and using the fact that the tensor product of very ample line bundles is very ample, 
we may assume that the $Q^*$-divisor
classes $\xi,(\xi+_{Q}a_1),\ldots,(\xi+_{Q}a_n)$ are represented
by very ample divisors on $W$. Hence, $-\pi^*(\ov{u})$ is a $Q^*$-numerically trivial complete intersection. 
By Proposition \ref{anis-compl-int}, $\pi^*(\ov{u})$ is anisotropic.
But the push-forward of this class to $Q^*(X)$ is equal to $u$ (recall that $\ov{u}=[Y\times 1]$ and this cycle is away from the center of the blow-up).
Thus, $u$ is anisotropic. The Theorem is proven.
 \phantom{a}\hspace{5mm}\Qed
\end{proof}

\begin{lemma}
 \label{MT-L0}
 Let $L$ be a line bundle on $Y$. Then,
 for $k$ sufficiently large in comparison to the $\ddim(Y)=d$,
 the $Q$-Chern class $c_1^Q(L^{p^k})$ is zero.
\end{lemma}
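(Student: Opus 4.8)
The plan is to exploit the fact that the formal group law of $Q^* = BP^*\otimes_{BP}BP/J$ kills a power of $p$, together with the nilpotence of positive-degree elements in $Q^*(Y)$. First I would recall that, since $J$ is a non-zero invariant ideal, it contains $p^r$ for some $r\geq 1$ (as noted in the proof of Proposition \ref{killing-SLN-Gen}, the radical of $J$ is a Landweber ideal $I(m)$ with $m>0$, so $v_0=p$ lies in the radical). Consequently $p^r\cdot 1 = 0$ in $Q$, and hence $Q$ is a $\zz/p^r$-algebra. Write $a = c_1^Q(L)\in Q^1(Y)$. Under the formal group law $F_Q$ of $Q^*$ we have $c_1^Q(L^{\otimes p^k}) = [p^k]_{F_Q}(a)$, the $p^k$-series of $F_Q$ evaluated at $a$.

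The key step is to show that $[p^k]_{F_Q}(a)=0$ once $p^k$ is large enough relative to $d=\ddim(Y)$. Write $[p]_{F_Q}(x) = px + (\text{higher order terms})$. Since $p^r = 0$ in $Q$, iterating the $p$-series $r$ times already lands in the ideal generated by the higher-order part: more precisely, by an easy induction on the exponent, $[p^{k}]_{F_Q}(x) \equiv (\text{power series all of whose terms have degree} \geq 2^{k-r+1} \text{ in } x) $ for $k\geq r$ — each application of $[p]$ beyond the point where the linear coefficient $p$ has been exhausted at least doubles the minimal degree, because the next-lowest term of $[p]_{F_Q}(x)$ is a multiple of $x^2$. (Alternatively, and more cleanly: $[p]_{F_Q}(x) = p x + b x^p + \cdots$ with the mod-$p$ reduction having lowest term in degree a power of $p$ by $p$-typicality, and iterating $k$ times pushes the lowest degree of the mod-$p^r$ reduction to at least $p^{\,k-r}$ or so.) Either way, for $k\gg 0$ every monomial occurring in $[p^k]_{F_Q}(x)$ has degree $>d$ in $x$.

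Finally I would invoke nilpotence: on a smooth projective variety $Y$ of dimension $d$, any product of more than $d$ elements of positive codimension in $Q^*(Y)$ vanishes (positive-degree Chern classes are nilpotent, with $Q^{>d}(Y)$ generated by such products, as $Q^*(Y)$ is a free-theory value concentrated in codimensions $\leq d$). Hence $a^j = 0$ in $Q^*(Y)$ for $j>d$, so substituting $x=a$ into the power series $[p^k]_{F_Q}(x)$, all of whose terms have degree $>d$, gives $c_1^Q(L^{p^k}) = [p^k]_{F_Q}(a) = 0$. The main obstacle is the bookkeeping in the middle step — controlling how fast the minimal degree of the iterated $p$-series grows in the mod-$p^r$ reduction — but this is a routine estimate once one observes that each iteration beyond the $r$-th multiplies (or at least increments substantially) the lowest degree, and a crude bound such as $p^k > d \cdot p^{r}$ suffices to conclude.
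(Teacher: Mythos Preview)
Your approach is the same as the paper's --- reduce to the formal $p^k$-series and use nilpotence of $c_1^Q(L)$ --- but the key degree estimate in the middle is not correct as stated. You claim that for $k\geq r$ the series $[p^k]_{F_Q}(x)$ starts in degree $\geq 2^{k-r+1}$, arguing that ``each application of $[p]$ beyond the point where the linear coefficient $p$ has been exhausted at least doubles the minimal degree.'' This is false: if $g(x)$ starts in degree $n$ with leading coefficient $c$, then $[p]_{F_Q}(g(x)) = p\,g(x) + a_2 g(x)^2 + \cdots$ has the term $pc\,x^n$, so applying $[p]$ does \emph{not} raise the minimal degree unless $pc$ happens to vanish. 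Concretely, if $p^2=0$ in $Q$ and $[p^2]_{F_Q}(x)=c_2x^2+\cdots$ with $c_2\neq 0$, then $[p^4]_{F_Q}(x)=c_2^3x^4+\cdots$ starts in degree $4$, not in degree $2^{4-2+1}=8$ as your bound predicts.

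The clean fix, which is exactly what the paper does, is to iterate $[p^r]_{F_Q}$ rather than $[p]_{F_Q}$. Since $p^r=0$ in $Q$, the series $[p^r]_{F_Q}(x)$ has no linear term and hence starts in degree $\geq 2$; composing a series with vanishing linear term with one starting in degree $n$ gives a series starting in degree $\geq 2n$. Thus $[p^{rm}]_{F_Q}(x)$ starts in degree $\geq 2^m$, and it suffices to take $k>r\cdot\log_2(d)$. Your nilpotence step is fine; the paper phrases it simply as ``the first Chern class is supported in positive codimension, so its $(d+1)$-st power is zero.''
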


\begin{proof}
 Indeed, $c_1^Q(L^{p^k})=[p^k]_Q(c_1^Q(L))$, but
 since $Q^*$ is $p^r$-torsion, for some $r$ (as $J$ is invariant and non-zero), the $Q$-formal multiplication $[p^r]_Q(x)$ has no linear term, and so, $[p^{rm}]_Q(x)$
has no terms of degree less than $2^m$, so it is sufficient to take $k>r\cdot\op{log}_2(d)$ (recall, that the first Chern class is supported in positive co-dimension, and so, the $(d+1)$-st power of it is zero). 
 \Qed
\end{proof}

\begin{lemma}
 \label{MT-L1}
 Let $B\stackrel{g}{\row}A$ be a regular embedding of co-dimension $d$, where $N_{B\subset A}=N$ is $Q$-equivalent to $g^*(M)$ for some vector bundle $M$ on $A$.
 Let $\pi:Bl_A(B)\row A$ be the blow-up of $A$ at $B$ and
 $\wt{M}=\pi^*(M)$. Then 
 $\pi^*([B])=c_d^Q(\wt{M})-c_d^Q(\wt{M}\otimes O(1))\in Q^*(Bl_A(B))$.
\end{lemma}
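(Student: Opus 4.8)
The plan is to work on the blow-up $Bl_A(B)$, where we have the exceptional divisor $E\subset Bl_A(B)$ with $O(-E)=O(1)$ (in the convention where $E=\pp(N)$ is the projectivised normal bundle), and the key geometric input is that $\pi^*([B]) = [E]\cdot(\text{something})$ — more precisely, the classical formula (see e.g. Fulton, and \cite[Proposition 5.27]{so2}) expressing the pull-back of the class of the centre in terms of the top Chern class of the normal bundle twisted by $O(1)$. The first step is to recall the precise form: over a regular embedding $g:B\row A$ of codimension $d$, one has $\pi^*([B])\in Q^*(Bl_A(B))$ equal to $c^Q_d(\pi^*(N))$ evaluated via the excess-intersection / self-intersection formula on the blow-up, which after the standard manipulation takes the shape $c_d^Q(\wt N) - c_d^Q(\wt N\otimes O(1))$ where $\wt N=\pi^*(N)$. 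This is the oriented-theory analogue of the statement that $g_*g^*(\alpha)=\alpha\cdot c_d(N)$ combined with the blow-up presentation of $Q^*(Bl_A(B))$.

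Next I would use the hypothesis $N \stackrel{Q}{\sim} g^*(M)$: since $Q$-equivalence means equality of all $Q$-Chern classes, and $Q$-Chern classes of a bundle are pulled back along $\pi$ to $Q$-Chern classes of $\wt N := \pi^*(N)$ versus $\wt M := \pi^*(M)$, we get $c_i^Q(\wt N)=c_i^Q(\wt M)$ for all $i$; in particular $c_d^Q(\wt N)=c_d^Q(\wt M)$. Moreover, since the formation of $Q$-Chern classes of a tensor product by a line bundle depends only on the $Q$-Chern classes of the vector bundle factor (the universal polynomial expressing $c^Q_\bullet(V\otimes O(1))$ in terms of $c^Q_\bullet(V)$ and $\xi=c_1^Q(O(1))$ — see the formal-group-law formalism of Section \ref{subs-three-one}), $Q$-equivalence is preserved under $-\otimes O(1)$, so $c_d^Q(\wt N\otimes O(1))=c_d^Q(\wt M\otimes O(1))$. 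Substituting into the formula from the first step yields $\pi^*([B])=c_d^Q(\wt M)-c_d^Q(\wt M\otimes O(1))$, as claimed.

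The main obstacle — and the only genuinely non-formal point — is pinning down the precise self-intersection formula on the blow-up in the generality of an arbitrary free theory $Q^*$ with its formal group law, rather than in the Chow-group case where it is textbook. Here I would lean on \cite[Proposition 5.27]{so2} (already invoked in the proof of Theorem \ref{Main-Gen}), which gives exactly the needed computation of $\pi^*$ of the centre in terms of $O(1)$ and the $Q$-roots of the normal bundle; once that identity is in hand in the form ``$\pi^*([B]) = \prod_j(\xi +_Q r_j) - \prod_j r_j$'' with $r_j$ the $Q$-roots of $N$, the passage to $c_d^Q(\wt M)-c_d^Q(\wt M\otimes O(1))$ is just the observation that $\prod_j(\xi+_Q r_j)$ is, by definition of Chern classes via roots and of the tensor-by-line-bundle formula, precisely $c_d^Q(\wt M\otimes O(1))$ up to sign conventions, and $\prod_j r_j = c_d^Q(\wt M)$ — with a sign bookkeeping step to reconcile $O(1)$ versus $O(-1)$ and the direction of the formal sum. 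I would carry out that sign check explicitly but would not expect it to present any real difficulty beyond care with conventions.
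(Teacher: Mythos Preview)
Your approach is essentially the paper's --- both apply \cite[Proposition~5.27]{so2} and then invoke the $Q$-equivalence $N\stackrel{Q}{\sim}g^*(M)$. The one point where your write-up goes astray is the intermediate formula ``$\pi^*([B])=c_d^Q(\wt N)-c_d^Q(\wt N\otimes O(1))$ with $\wt N=\pi^*(N)$'': the bundle $N$ lives on $B$, not on $A$, so $\pi^*(N)$ is undefined on $Bl_A(B)$ and no such formula exists prior to using the hypothesis. What the cited proposition actually yields (applied to the blow-up square) is
\[
\pi^*([B])=-j_*\!\left(\frac{c_d^Q(\eps^*N\otimes O(1))-c_d^Q(\eps^*N)}{c_1^Q(O(-1))}\right),
\]
an expression pushed forward from the exceptional divisor $j:\pp_B(N)\hookrightarrow Bl_A(B)$. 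The $Q$-equivalence hypothesis is precisely what lets you rewrite the Chern classes inside $j_*(-)$ as $j^*$ of the globally defined classes $c_d^Q(\wt M)$ and $c_d^Q(\wt M\otimes O(1))$; then the projection formula together with $j_*(1)=c_1^Q(O(-1))$ cancels the denominator and produces the stated identity. So the hypothesis is not a cosmetic substitution after the fact but the mechanism by which a closed formula on all of $Bl_A(B)$ comes into being; your ``sign bookkeeping'' step is really this projection-formula manipulation, and once you reorganise the argument accordingly it matches the paper's proof line for line.
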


\begin{proof}
 Applying \cite[Proposition 5.27]{so2} to the cartesian square and blow-up diagram:
 $$
 \xymatrix{
 A & B \ar[l]_{g}& & A & Bl_A(B) \ar[l]_(0.53){\pi} \\
 B \ar[u]^{g}& B \ar@{=}[l] \ar@{=}[u] & & B \ar[u]^{g} & \pp_B(N) \ar[u]^{j} \ar[l]_(0.53){\eps}
 },
 $$
 taking into account that (in the notations of loc. cit.) $\cm=N$, we get:
 $$
 0=\pi^*g_*+j_*\left(\frac{c_d^Q(N\otimes O(1))-c_d^Q(N)}{c_1^Q(O(-1))}\cdot\eps^*\right).
 $$
Since the Chern classes of $N$ are $j^*$ of the Chern classes of $\wt{M}$,  
using the projection formula and the fact that $j_*(1)=c_1^Q(O(-1))$, we obtain:
 $$
 \pi^*([B])=c_d^Q(\wt{M})-c_d^Q(\wt{M}\otimes O(1)).
 $$
 \Qed
\end{proof}

\begin{remark}
 \label{Main-Gen-rem}
 Since free oriented cohomology theories are stable under purely transcendental field extensions, the above Theorem shows that if
 $k$ is an arbitrary field (of characteristic zero) and 
 $\wt{k}=k(t_1,t_2,\ldots)$ is its {\it flexible closure}, then,
 for any smooth projective $X/k$ and any theory $Q^*$ as above, 
 $$
 Q^*_{Num}(X)=Q^*_{Num}(X_{\wt{k}})=Q^*_{iso}(X_{\wt{k}}).
 $$
\end{remark}

For $m=\infty$, $P(\infty)^*=\Ch^*=\CH^*/p$ is Chow groups modulo $p$, and we obtain:

\begin{theorem}
 \label{conj}
 Let $k$ be flexible. Then isotropic Chow groups $\Ch^*_{k/k}$
 coincide with numerical Chow groups $\Ch^*_{Num}$.
\end{theorem}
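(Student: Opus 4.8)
The plan is to read off this statement as the case $m=\infty$ of Theorem \ref{Main}, or equivalently the case $J=I(\infty)$ of the more general Theorem \ref{Main-Gen}. First I would recall that $\Ch^*=\CH^*/p$ is exactly the free theory $P(\infty)^*=BP^*\otimes_{BP}BP/I(\infty)$ attached to the Landweber ideal $I(\infty)=(v_0,v_1,v_2,\ldots)$; this ideal is invariant under Landweber--Novikov operations by \cite[Theorem 2.7]{La73b}, and it is non-zero since it contains $v_0=p$. Hence $Q^*:=\Ch^*$ satisfies the hypotheses of Theorem \ref{Main-Gen}, and that theorem yields $\Ch^*_{iso}=\Ch^*_{Num}$ over any flexible field.

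The remaining task is purely a matter of matching terminology. I would check that the theory-theoretic isotropic version $\Ch^*_{iso}$ of Definition \ref{iso-theory} coincides with the isotropic Chow groups $\Ch^*_{k/k}$ from the Introduction, i.e. the quotient of $\CH^*/p$ by classes pushed forward from $p$-anisotropic smooth projective varieties. By Remark \ref{A-anis-var-rem}(2), a smooth projective variety is $\Ch^*$-anisotropic in the sense of Definition \ref{A-anis-var} precisely when it is $p$-anisotropic in the classical sense, so the two notions of anisotropic class agree and therefore so do the two quotients. Likewise $\Ch^*_{Num}$ here is the usual numerical Chow groups modulo $p$, since the numerical pairing $\la x,y\ra=\pi_*(x\cdot y)$ of Observation \ref{A-anis-var-obsO} is the classical degree pairing on $\CH^*/p$.

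I do not expect a genuine obstacle: all of the substantive input --- the multiplicative-projector argument of Proposition \ref{killing-SLN-Gen} lifting a numerically trivial class to $\Omega^*$ with $Q$-numerically trivial Landweber--Novikov translates, the successive reductions through Grassmann and flag bundles and the deformation to the normal cone to a numerically trivial complete intersection, and the anisotropy of such complete intersections over a flexible field (Proposition \ref{anis-compl-int}) --- is already carried out in the proof of Theorem \ref{Main-Gen} for an arbitrary non-zero invariant $J$, and $I(\infty)$ is one such. So the proof of this statement reduces to the one-line specialization $J=I(\infty)$ together with the definitional bookkeeping above.
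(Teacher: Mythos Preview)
Your proposal is correct and matches the paper's own argument: the paper simply observes that $P(\infty)^*=\Ch^*=\CH^*/p$ and deduces Theorem \ref{conj} as the case $m=\infty$ of Theorem \ref{Main}. Your additional terminological bookkeeping (matching $\Ch^*$-anisotropy with $p$-anisotropy via Remark \ref{A-anis-var-rem}(2), and identifying $\Ch^*_{iso}$ with $\Ch^*_{k/k}$) is accurate and makes the specialization explicit, but the paper treats these identifications as already established.
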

 
This settles \cite[Conjecture 4.7]{Iso}.

\begin{corollary}
 \label{fin-is-Chow-gr}
 Let $k$ be flexible. Then isotropic Chow groups $\Ch^*_{k/k}(X)$ of varieties are finite groups.
\end{corollary}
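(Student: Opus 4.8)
The plan is to deduce this at once from Theorem~\ref{conj}. Since $k$ is flexible, that theorem identifies $\Ch^*_{k/k}(X)$ with the numerical Chow group $\Ch^*_{Num}(X)$, so it is enough to prove that each $\Ch^i_{Num}(X)$ is a finite group; as these vanish outside the range $0\leq i\leq\ddim(X)$, the whole group $\Ch^*_{k/k}(X)$ is then finite. First I would reduce to the case of $X$ smooth projective: for a general smooth $X$ choose a smooth projective compactification $\bar X$ (available in characteristic zero) and use the localisation property of the oriented cohomology theory $\Ch^*_{k/k}$ to present $\Ch^*_{k/k}(X)$ as a quotient of $\Ch^*_{k/k}(\bar X)$ (stratifying the boundary by smooth pieces if needed), so that finiteness for $\bar X$ implies it for $X$.

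So assume $X$ is smooth projective of dimension $d$. I would invoke the classical fact that the group $N^i(X):=\CH^i(X)/(\text{numerical equivalence})$ of codimension $i$ cycles modulo numerical equivalence is a finitely generated free abelian group, say $N^i(X)\cong\zz^{\rho_i}$: it has finite rank because $N^i(X)\otimes\qq$ is a quotient of $\CH^i(X)_{\qq}/(\text{homological equivalence})$, which embeds into a Weil cohomology group and hence is finite-dimensional; it is torsion free because the numerical pairing $N^i(X)\times N^{d-i}(X)\row\zz$ is non-degenerate; and a torsion-free group of finite rank that embeds, via that pairing, into $\Hom_{\zz}(N^{d-i}(X),\zz)$ is finitely generated (\cite[Ch.~19]{Fu}). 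By definition $\Ch^i_{Num}(X)$ is the quotient of $\Ch^i(X)=\CH^i(X)/p$ by the left kernel of the mod-$p$ degree pairing, and the latter factors through $N^{\bullet}(X)/p$; hence $\Ch^i_{Num}(X)$ is a quotient of $N^i(X)/p\cong\ff_p^{\rho_i}$, which is finite. Together with Theorem~\ref{conj} this proves the Corollary.

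There is no real obstacle here: the entire content lies in Theorem~\ref{conj}, while the finiteness of $\Ch^*_{Num}$ is standard. The only points requiring a little care are the reduction from arbitrary smooth varieties to smooth projective ones (routine, via resolution and localisation) and the observation that the mod-$p$ degree pairing on $\Ch^*(X)$ is induced from the integral numerical pairing, so that mod-$p$ numerical triviality depends only on the image in $N^{\bullet}(X)/p$. One may also avoid the integral statement altogether: if $x\in\Ch^i(X)$ is homologically trivial, i.e.\ its mod-$p$ cycle class in $\HH^{2i}_{\et}(X_{\kbar},\ff_p(i))$ vanishes, then the mod-$p$ degree of $x\cdot y$ vanishes for every $y$ (being the trace of $\op{cl}(x)\cup\op{cl}(y)$), so $\Ch^i_{Num}(X)$ is a quotient of $\Ch^i(X)/\kker(\op{cl})$, which injects into the finite group $\HH^{2i}_{\et}(X_{\kbar},\ff_p(i))$.
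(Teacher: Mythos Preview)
Your proof is correct and follows the same overall strategy as the paper: invoke Theorem~\ref{conj} to replace isotropic by numerical Chow groups, then bound $\Ch^*_{Num}(X)$ by a finite cohomological invariant. The difference is only in how that bound is obtained. The paper constructs a cycle-class map into singular cohomology $H^{2*}(\bar X(\cc);\ff_p)$ by passing to complex embeddings of finitely generated subfields of $k$ (a colimit argument is needed because a flexible field is typically not embeddable in $\cc$) and identifying the resulting maps via smooth base change in \'etale cohomology. Your main argument instead uses the integral group $N^i(X)=\CH^i(X)/(\text{numerical equivalence})$, which is finitely generated free by the standard Weil-cohomology argument (Fulton, Example~19.1.4), and observes that $\Ch^i_{Num}(X)$ is a quotient of $N^i(X)/p$; this is a bit more direct, since it works uniformly over any base field and avoids the colimit and base-change bookkeeping. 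Your alternative via $\HH^{2i}_{\et}(X_{\kbar},\ff_p(i))$ is essentially the paper's argument recast in \'etale language, and again sidesteps the complex-embedding detour. You are also more explicit than the paper in reducing an arbitrary smooth $X$ to a smooth projective compactification via localisation; the paper's proof tacitly treats only the smooth projective case.
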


\begin{proof}
 Let $X$ be a smooth projective variety over $k$. It is defined over
 some finitely generated subfield $l\subset k$, that is, $X=\bar{X}_k$,
 for some $\bar{X}/l$. Moreover, any algebraic cycle on $X$ and any rational (mod $p$) equivalence among such cycles
 is defined over some finitely generated subfield $l\subset F\subset k$.
 Thus, $\Ch^*(X)$ is the colimit of $\Ch^*(\bar{X}_F)$, where $F$ runs over all such intermediate finitely generated subfields.
 Since $char(k)=0$, any such $F$ has a complex embedding:
 $F\row\cc$, defining a map $\Ch^*(\bar{X}_F)\row H^{2*}(\bar{X}(\cc);\ff_p)$. Since $H^*(\bar{X}(\cc);\ff_p)=H^*_{et}(\bar{X}_{\cc};\ff_p)$ and the
 latter group is equal to $H^*_{et}(\bar{X}_{\bar{F}};\ff_p)$ by the smooth
 base change theorem \cite[Chapter VI, Cor. 4.3]{Miln}, the above map
 is just the restriction $\Ch^*(\bar{X}_F)\row H^{2*}_{et}(\bar{X}_{\bar{F}};\ff_p)=H^{2*}_{et}(\bar{X}_{\bar{l}};\ff_p)$. In particular, it doesn't depend on the choice of the complex embedding. The same smooth base change theorem shows that for 
 any embedding $F\subset E$ of our finitely generated subfields, the maps
 agree and provide the topological realisation map
 $\Ch^*(X)\row H^{2*}_{et}(\bar{X}_{\bar{l}};\ff_p)$. 
 
 Since numerical pairing is defined on the level of the topological realisation, the numerical Chow groups 
 $\Ch^*_{Num}(X)$ are sub-quotients of the singular cohomology $H^*(\bar{X}(\cc);\ff_p)$ of the topological realisation of $\bar{X}$.
 The latter groups are finite. Due to Theorem \ref{conj}, the same
 is true for isotropic Chow groups (as long as $k$ is flexible).
 \Qed
\end{proof}

Theorem \ref{Main} permits to identify the isotropic and numerical versions for the {\it Morava K-theory} as well.
For this we will need the following result.

\begin{proposition}
 \label{Pm-Km-num-triv}
 Any numerically trivial $K(m)$-class can be lifted to a numerically trivial $P\{m\}$-class.
\end{proposition}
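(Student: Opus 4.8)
The plan is to recast the statement as a base-change compatibility of numerical equivalence, and then to exploit that $K(m)$ is a graded field. Put $I:=(v_j\mid j>m)\subset P\{m\}$, so that $K(m)=P\{m\}/I$ as graded coefficient rings. Since $P\{m\}^*$ and $K(m)^*$ are both free theories, the canonical morphism $P\{m\}^*\to K(m)^*$ is obtained from $P\{m\}^*$ by the change of coefficients $P\{m\}\twoheadrightarrow K(m)$; hence for every smooth projective $X$ the map $P\{m\}^*(X)\to K(m)^*(X)$ is surjective with kernel $I\cdot P\{m\}^*(X)$, and — as $\pi_*$ commutes with morphisms of theories — the $K(m)$-valued numerical pairing on $K(m)^*(X)$ is the reduction modulo $I$ of the $P\{m\}$-valued one on $P\{m\}^*(X)$. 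First I would record the consequence: if $u\in K(m)^*(X)$ is \emph{numerically trivial} and $\tilde u_0\in P\{m\}^*(X)$ is any lift, then $\la\tilde u_0,z\ra\in I$ for every $z$, and a numerically trivial lift of $u$ exists precisely when the image of $\tilde u_0$ in $P\{m\}^*_{Num}(X)$ lies in $I\cdot P\{m\}^*_{Num}(X)$. Thus it suffices to prove the cleaner statement that the (obviously surjective) map $P\{m\}^*_{Num}(X)\otimes_{P\{m\}}K(m)\twoheadrightarrow K(m)^*_{Num}(X)$ is an isomorphism, i.e. that forming the numerical quotient commutes with the base change $P\{m\}\to K(m)$.

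To handle this I would bring in two facts. First, finiteness: exactly as in the proof of Corollary \ref{fin-is-Chow-gr}, the numerical pairing of a free theory factors through a topological realisation $A^*(X)\to A(X(\cc))$, and for a theory $A$ obtained from $BP$ by a quotient and a localisation — as is the case for both $P\{m\}$ and $K(m)$ — the realisation target, being built from the finitely generated $BP_*$-module $BP_*(X(\cc))$, is finitely generated over the coefficient ring; hence $P\{m\}^*_{Num}(X)$ is finitely generated over $P\{m\}$ and $K(m)^*_{Num}(X)$ is finitely generated over $K(m)$. Second, $K(m)=\ff_p[v_m^{\pm 1}]$ is a graded field, so $K(m)^*_{Num}(X)$ is a finite free graded $K(m)$-module on which the numerical pairing, being nondegenerate, is in fact perfect: its Gram matrix in a homogeneous basis is invertible over $K(m)$.

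Then I would argue as follows. The kernel of $P\{m\}^*_{Num}(X)\otimes_{P\{m\}}K(m)\to K(m)^*_{Num}(X)$ is exactly the radical of the mod-$I$ reduction of the (nondegenerate) numerical pairing on $P\{m\}^*_{Num}(X)$; so the task is to show this reduced pairing is again nondegenerate. Lifting a homogeneous basis of $K(m)^*_{Num}(X)$, together with a finite generating set of $P\{m\}^*_{Num}(X)$, up to $P\{m\}^*_{Num}(X)$, the Gram matrix of the $P\{m\}$-valued pairing reduces modulo $I$ to the invertible Gram matrix over the graded field $K(m)$; one then corrects a hypothetical radical class by successive approximation along the $I$-adic filtration, solving at each stage a finite linear system that is uniquely solvable because its reduction modulo $I$ is governed by an invertible matrix over $K(m)$. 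Finite generation of $P\{m\}^*_{Num}(X)$ over $P\{m\}$, together with the fact that the generators $v_j$ of $I$ all lie in a single cohomological direction, is what one uses to make this process terminate inside $P\{m\}^*_{Num}(X)$ itself rather than merely in its $I$-adic completion.

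The main obstacle is precisely this last point: passing from solvability after $I$-adic completion back to solvability over $P\{m\}$ — equivalently, proving that reduction modulo $I$ preserves nondegeneracy of the numerical pairing on the finitely generated (but non-free, and over a non-Noetherian ring) module $P\{m\}^*_{Num}(X)$. Everything there hinges on the perfectness of the pairing after the further base change to the graded field $K(m)$, which is the structural reason why Morava K-theory — as opposed to a general quotient of $BP$ — behaves so well. Note that for $m=\infty$ there is nothing to prove, since $P\{\infty\}^*=K(\infty)^*=\Ch^*$.
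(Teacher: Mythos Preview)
Your reformulation is correct and illuminating: the statement is equivalent to the injectivity of
$P\{m\}^*_{Num}(X)\otimes_{P\{m\}}K(m)\to K(m)^*_{Num}(X)$, i.e.\ to showing that the mod-$I$ reduction of the nondegenerate pairing on $M:=P\{m\}^*_{Num}(X)$ stays nondegenerate. But the $I$-adic correction scheme has a genuine gap, precisely at the point you flag. Your termination heuristic --- that the generators $v_j$ of $I=(v_{m+1},v_{m+2},\ldots)$ ``all lie in a single cohomological direction'' --- fails in $P\{m\}$ because $v_m$ is inverted: for every degree $d$ and every $n$, the piece $(I^n)_d$ is nonzero (take $v_{m+1}^{\,n}v_m^{a}$ with $a$ adjusted), so the $I$-adic filtration is not eventually zero in any fixed degree and $P\{m\}$ is not $I$-adically complete. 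Thus even a finitely generated $M$ need not satisfy $\langle x,M\rangle\subset I\Rightarrow x\in IM$; your successive approximation produces a solution only in the completion. The obvious attempt to repair this by working over $P(m)$ (where the degree bound \emph{does} force $I^n=0$ degreewise for $n\gg 0$) breaks the other half of your argument: $P(m)/I=CK(m)=\ff_p[v_m]$ is no longer a graded field, so the pairing on the quotient is not perfect and the linear systems are no longer uniquely solvable. There is also a secondary issue: your finiteness argument yields $P\{m\}^*_{Num}(X)$ only as a subquotient of a finitely generated $P\{m\}$-module, and $P\{m\}$ is not Noetherian, so finite generation of $M$ is not established.

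The paper's proof takes a genuinely different route that sidesteps this tension. It works on $P(m)^*$ and the connective theory $CK(m)^*$, where the degree bounds \emph{are} available, but instead of trying to control the pairing directly it controls all $P(m)$-Landweber--Novikov operations on a lift $x$. An explicit inductive correction (replacing $x$ by $v_m^a x-\lambda^{-1}\pi_*(xy)\cdot S^{\ov{r}}_{P(m)}(x)$ for well-chosen $y,\ov{r}$) forces $\ov{S^{Tot}_{P(m)}(x)}$ to be $CK(m)$-numerically trivial; the termination here \emph{does} work, because it is governed by dimensions of elements of $CK(m)$, where $v_m$ is not inverted. One then looks at the ideal $J\subset P(m)$ generated by all $\pi_*(S^{\ov{r}}_{P(m)}(x)\cdot z)$: by Riemann--Roch it is invariant under L--N operations, and by the previous step it maps to zero in $CK(m)$, hence contains no power of $v_m$. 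Landweber's classification of invariant ideals then forces $J=0$, so $x$ is $P(m)$-numerically trivial. The extra input --- the L--N module structure together with Landweber's theorem --- is exactly what bridges the gap your purely module-theoretic approach cannot cross.
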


\begin{proof}
 Let $CK(m)^*$ be the free theory with the coefficient ring
 $CK(m)=\ff_p[v_m]$ (the {\it connective Morava K-theory}).
 Denote as $y\mapsto\ov{y}$ the standard projection $P(m)^*\row CK(m)^*$. Let $\pi:X\row\op{Spec}(k)$ be a smooth projective variety.
 Multiplying by some power of $v_m$ (an invertible element), we may assume that our numerically trivial class is lifted to 
 $\ov{x}\in CK(m)^*(X)$ which is the restriction of some
 $x\in P(m)^*(X)$. I claim that:
 
 \begin{claim}
 \label{PmKm-nt-claim}
 Modulo multiplication by some power of $v_m$, $x$ can be chosen in such a way that
 $\ov{S_{P(m)}^{Tot}(x)}\in CK(m)^*[\ov{t}](X)$ is numerically trivial. 
 \end{claim}

 \begin{proof}
 Let $S_{P(m)}^{\ov{t}^{\ov{r}}}=S_{P(m)}^{\ov{r}}$ be the individual $P(m)$-Landweber-Novikov operation (the coefficient of $S_{P(m)}^{Tot}$ at the respective monomial).
 This operation has degree $|\ov{r}|=\sum_i ir_i$ (equal to that of the monomial). 
 We will show by a decreasing induction on $d$ that we may choose $x$ so that, for any $\ov{r}$ and $y\in P(m)^*(X)$ with $|\ov{r}|+\op{codim}(y)\geq d$,
 we have $\pi_*(\ov{S_{P(m)}^{\ov{r}}(x)}\cdot\ov{y})=0$.
 Clearly, for a given choice of $x$, the statement is true for $d>\ddim(x)$, since the dimension of the element
 $\ov{S_{P(m)}^{\ov{r}}(x)}\cdot\ov{y}$ is negative, in this case. On the other hand, the case $d=0$ gives the claim above, because the map $P(m)^*\twoheadrightarrow CK(m)^*$
 is surjective and $P(m)^*(X)$ is generated as a $P(m)$-module by elements of non-negative codimension (since it is so for $\Omega^*$). 
 
 Suppose, the lifting $x$ satisfies the statement for $d+1$. Consider the set 
 $$
 R=R(x)=\{\ov{r}\,|\, \exists y,\,\,\text{such that}\,\,|\ov{r}|+\op{codim}(y)=d\,\,\text{and}\,\,\pi_*(\ov{S_{P(m)}^{\ov{r}}(x)}\cdot\ov{y})\neq 0\}. 
 $$
 We say that $\ov{s}\leq\ov{r}$, if $s_i\leq r_i$, for every $i$
 (in other words, the monomial $\ov{t}^{\ov{s}}$ is a divisor of 
 $\ov{t}^{\ov{r}}$).
 Let 
 $$
 S=S(x)=\{\ov{r}|\,\,\text{such that}\,\,|\ov{r}|\leq d\,\,\text{and}\,\,\forall\ov{s}\leq\ov{r},\,\ov{s}\not\in R(x)\}.
 $$
 Let $\ov{r}\in R$ be a {\it minimal} element of $R$ in the sense of the mentioned partial order. By definition of $R$, there exists $y\in P(m)^*(X)$, such that 
 $|\ov{r}|+\op{codim}(y)=d$ and 
 $\pi_*(\ov{S_{P(m)}^{\ov{r}}(x)}\cdot\ov{y})=\lambda\cdot v_m^a$,
 where $\lambda\in\ff_p$ is invertible. 
 Let $u:=\pi_*(x\cdot y)\in P(m)$.
 Note that since $x$ satisfies the statement for $d+1$
 and by the definition of $R$, we have:
 
 \noindent
 {\bf Fact A}: {\it For any
 $\ov{s}$ and any $z\in P(m)^*(X)$, $\pi_*(\ov{S_{P(m)}^{\ov{s}}(x)}\cdot\ov{z})\in CK(m)$ is divisible by $v_m^a$. Moreover, if $\ov{s}\not\in R(x)$, then it is divisible by $v_m^{a+1}$.}
 
 Indeed, the dimension of any such non-zero element
 must be $\geq$, respectively $>\ddim(x)-d=\ddim(v_m^a)$.
 Since $S_{P(m)}^{Tot}$ is multiplicative and $S^{P(m)}_{Tot}$, which on $X$ is equal to $S_{P(m)}^{Tot}\cdot\op{Td}(X)$, commutes with push-forwards,
 by the Riemann-Roch theorem (for multiplicative operations) -
 Panin \cite[Theorem 2.5.4]{P-RR}, for any $\ov{q}$, we have:
 \begin{equation}
 \label{PmKm-eq}
 \ov{S_{P(m)}^{\ov{q}}(u)}=
 \ov{S^{P(m)}_{\ov{q}}(u)}=
 \sum_{\ov{q}_1+\ov{q}_2+\ov{q}_3=\ov{q}}\pi_*(\ov{\op{Td}(X)^{\ov{q}_1}}\cdot\ov{S_{P(m)}^{\ov{q}_2}(x)}\cdot\ov{S_{P(m)}^{\ov{q}_3}(y)}) 
 \end{equation}
 (here $\op{Td}(X)^{\ov{q}_1}$ is the $\ov{q}_1$-component of $\op{Td}(X)=\op{Td}_{S^{Tot}_{P(m)}}(X)$).
 Combining it with the Fact A, we get:
 
 \noindent
 {\bf Fact B}: {\it $\ov{S_{P(m)}^{\ov{q}}(u)}$ is divisible by $v_m^a$, for any $\ov{q}$. Moreover, if $\ov{q}\in S(x)$, then it is divisible by $v_m^{a+1}$.}
 
 For the second claim, we just neet to observe that if $\ov{q}\in S$, then $\ov{q}_2\not\in R$.
 
 Consider
 $$
 x':=v_m^a\cdot x-\lambda^{-1}\cdot u\cdot S_{P(m)}^{\ov{r}}(x).
 $$
 Note that $\ov{u}=0$ (as $\ov{x}$ is $CK(m)$-numerically trivial)
 and so, $\ov{x'}=\ov{x}\cdot v_m^a$.
 I claim that $x'$ still satisfies the statement for $d+1$, and moreover, $S(x')\supset S(x)\cup\{\ov{r}\}$.
 
 Let $|\ov{s}|+\op{codim}(z)>d$, respectively, 
 $|\ov{s}|+\op{codim}(z)=d$ and $\ov{s}\in S(x)$.
 Since $S_{P(m)}^{Tot}$ is a multiplicative operation and
 $S_{P(m)}^{Tot}(v_m)=v_m$ (note that $v_i$, for $i<m$, are
 zero in $P(m)$), we have: 
 $
 \pi_*(\ov{S_{P(m)}^{\ov{s}}(v_m^a\cdot x)}\cdot\ov{z})=
 v_m^a\cdot\pi_*(\ov{S_{P(m)}^{\ov{s}}(x)}\cdot\ov{z})=0, 
 $
 as
 $x$ satisfies the statement for $d+1$, respectively, 
 $\ov{s}\not\in R(x)$. At the same time,
 $$
 \pi_*(\ov{S_{P(m)}^{\ov{s}}(u\cdot S_{P(m)}^{\ov{r}}(x))}\cdot\ov{z})=\sum_{\ov{s}_1+\ov{s}_2=\ov{s}}
 \pi_*(\ov{S_{P(m)}^{\ov{s}_1}(u)}\cdot
 \ov{S_{P(m)}^{\ov{s}_2}S_{P(m)}^{\ov{r}}(x)}\cdot\ov{z})
 $$
 is equal to
 $v_m^b\cdot\sum_{\ov{s}_1+\ov{s}_2=\ov{s}}
 \pi_*(\ov{S_{P(m)}^{\ov{s}_2}S_{P(m)}^{\ov{r}}(x)}\cdot\ov{w}_{\ov{s}_1})$, for some $\ov{w}_{\ov{s}_1}\in CK(m)^*(X)$,
 where $b=a$, respectively, $b=a+1$, because $\ov{S_{P(m)}^{\ov{s}_1}(u)}$ is divisible by $v_m^b$, by Fact B (note that $\ov{s}_1\in S(x)$, if $\ov{s}$ does). Since
 $S_{P(m)}^{\ov{s}_2}S_{P(m)}^{\ov{r}}$ is a linear combination of Landweber-Novikov operations, by Fact A, our expression is divisible by $v_m^{a+b}$ and so, is zero (since the dimension of the result is less, respectively, equal to that of $v_m^{2a}$). 
 Thus, $\pi_*(\ov{S_{P(m)}^{\ov{s}}(x')}\cdot\ov{z})=0\in CK(m)$.
 Moreover, in the second case,
 the same holds for
 all $\ov{s}'\leq\ov{s}$ (as these elements also belong to $S(x)$).
 Hence, $x'$ satisfies the statement for $d+1$ and $S(x)\subset S(x')$.

 Finally, since $\ov{r}$ is a minimal element of $R(x)$, $\ov{S_{P(m)}^{\ov{r}}(u)}=\lambda\cdot v_m^a$ (as the only non-zero term in (\ref{PmKm-eq}) corresponds to $\ov{q}_2=\ov{r}$), while
$\ov{S_{P(m)}^{\ov{r}'}(u)}$ is divisible by $v_m^{a+1}$, for every other $\ov{r}'\leq\ov{r}$, because the dimension of this element is $>\ddim(v_m^a)$. Hence, for $|\ov{r}|+\op{codim}(z)=d$, we have: 
\begin{equation*}
 \begin{split}
\pi_*(\ov{S_{P(m)}^{\ov{r}}(\lambda^{-1}\cdot u\cdot S_{P(m)}^{\ov{r}}(x))}\cdot\ov{z})&=
\lambda^{-1}\sum_{\ov{r}_1+\ov{r}_2=\ov{r}}
\pi_*(\ov{S_{P(m)}^{\ov{r}_1}(u)\cdot S_{P(m)}^{\ov{r}_2}S_{P(m)}^{\ov{r}}(x)}\cdot\ov{z})\\
&=v_m^a\cdot\pi_*(\ov{S_{P(m)}^{\ov{r}}(x)}\cdot\ov{z})+v_m^{a+1}\sum_{\ov{q}}\pi_*(\ov{S_{P(m)}^{\ov{q}}(x)}\cdot\ov{w}_{\ov{q}}),
\end{split}
\end{equation*}
for some $\ov{w}_{\ov{q}}\in CK(m)^*(X)$. The latter summand is divisible by $v_m^{2a+1}$, by Fact A, and so, is zero, since the dimension of the result is equal to that of $\ddim(v_m^{2a})$. The first summand is equal to
$\pi_*(\ov{S_{P(m)}^{\ov{r}}(v_m^a\cdot x)}\cdot\ov{z})$. Hence, $\pi_*(\ov{S_{P(m)}^{\ov{r}}(x')}\cdot\ov{z})=0$ and
consequently, $\ov{r}\not\in R(x')$ and so, $\ov{r}\in S(x')$ (as all other $\ov{r}'\leq\ov{r}$ were contained in 
$S(x)\subset S(x')$).
Thus, $S(x')$ contains $S(x)\cup\{\ov{r}\}$. 

We managed to increase the cardinality of $S(x)$ while keeping the condition $d+1$. After finitely many such steps $S(x)$ will contain all $\ov{r}$ of degree $\leq d$, which will mean that $x$ satisfies the
condition for $d$. The induction step and the claim is proven. 
\Qed
\end{proof}

So, we may assume that
$\ov{S_{P(m)}^{Tot}(x)}\in CK(m)^*[\ov{t}](X)$ is numerically trivial.
Consider the ideal $J\subset P(m)=BP/I(m)$ generated by $\pi_*(S_{P(m)}^{\ov{r}}(x)\cdot z)$, for all $\ov{r}$
and all $z\in P(m)^*(X)$. It follows from the Riemann-Roch theorem that this ideal is invariant under
$P(m)$-Landweber-Novikov operations. On the other hand, 
we know that the projection of it to $CK(m)$ is zero.
Hence, it doesn't contain powers of $v_m$. By the result of Landweber \cite[Proposition 2.11]{La73b}, the
ideal $J$ must be zero. In particular, $x$ is $P(m)$-numerically trivial. Dividing by an appropriate power of
$v_m$, we obtain a numerically trivial lifting of the original class from $K(m)^*$ to $P\{m\}^*$.
 \Qed
\end{proof}

Let $\pi:X\row\op{Spec}(k)$ be a smooth projective variety and $I(X)=\op{Im}(\pi_*:BP_*(X)\row BP)\subset BP$.
This is an invariant ideal in $BP$ which is finitely generated, since $BP^*(X)$ is generated by elements of non-negative codimension by the result of Levine-Morel \cite[Corollary 1.2.13]{LM}. By the result of Landweber \cite[Theorem 2.7, Proposition 3.4]{La73b}, $\sqrt{I(X)}$ is either $I(n)$, for some $1\leq n<\infty$, or the whole $BP$.
It appears that this ideal contains the information on $P(m)$ and $K(m)$-anisotropy of $X$.

\begin{proposition}
 \label{anis-Pm-Km-IX}
 The following conditions are equivalent:
 \begin{itemize}
  \item[$(1)$] $X$ is $P(m)$-anisotropic; 
  \item[$(2)$] $X$ is $K(m)$-anisotropic;
  \item[$(3)$] $\sqrt{I(X)}=I(n)$, for some $1\leq n\leq m$.
 \end{itemize}
\end{proposition}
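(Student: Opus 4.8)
The plan is to translate all three conditions into containments of the ideal $I(X)$ inside $BP$, and then read off the equivalences from Landweber's classification of invariant primes together with an elementary computation of the kernels of the canonical maps $BP\row P(m)$ and $BP\row K(m)$.

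First I would set up the dictionary. Both $P(m)^*$ and $K(m)^*$ are obtained from $BP^*$ by coefficient base change along $BP\twoheadrightarrow P(m)=BP/I(m)$, respectively $BP\row K(m)$; hence their homology and push-forwards are obtained by the same base change, so $\op{Im}\!\big(\pi^{P(m)}_*\big)$ is the image of $I(X)$ in $P(m)$ and $\op{Im}\!\big(\pi^{K(m)}_*\big)$ is the image of $I(X)$ in $K(m)$. Thus $X$ is $P(m)$-anisotropic iff $I(X)\subseteq I(m)$, and $X$ is $K(m)$-anisotropic iff $I(X)\subseteq\mathfrak{q}_m$, where $\mathfrak{q}_m:=\kker(BP\row K(m))$. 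The next point to pin down is the structure of $\mathfrak{q}_m$: for finite $m$ the map $BP\row K(m)$ factors as $BP\twoheadrightarrow\ff_p[v_m]\hookrightarrow\ff_p[v_m^{-1},v_m]=K(m)$, the first kernel is exactly $(v_i\,:\,i\geq 0,\ i\neq m)$ and the localization is injective, so $\mathfrak{q}_m=(v_i\,:\,i\geq 0,\ i\neq m)$. In particular $\mathfrak{q}_m$ is prime (the quotient $\ff_p[v_m]$ is a domain), $I(m)\subseteq\mathfrak{q}_m\subsetneq BP$, and $v_m\notin\mathfrak{q}_m$; for $m=\infty$ one simply has $\mathfrak{q}_\infty=I(\infty)$ and condition $(2)$ literally coincides with condition $(1)$.

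With this dictionary the equivalences are formal. For $(1)\Leftrightarrow(3)$: if $I(X)\subseteq I(m)$ then $I(X)\subsetneq BP$, so by Landweber \cite[Theorem 2.7, Proposition 3.4]{La73b} $\sqrt{I(X)}=I(n)$ for some finite $n\geq 1$, and $I(n)=\sqrt{I(X)}\subseteq\sqrt{I(m)}=I(m)$ forces $n\leq m$ because $I(0)\subset I(1)\subset\cdots$; conversely $(3)$ gives $I(X)\subseteq\sqrt{I(X)}=I(n)\subseteq I(m)$. For $(1)\Rightarrow(2)$ one just uses $I(m)\subseteq\mathfrak{q}_m$. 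Finally, for $(2)\Rightarrow(3)$, which closes the loop: $I(X)\subseteq\mathfrak{q}_m\subsetneq BP$ again yields $\sqrt{I(X)}=I(n)$ for some finite $n\geq 1$, and $I(n)=\sqrt{I(X)}\subseteq\sqrt{\mathfrak{q}_m}=\mathfrak{q}_m$; since $v_0,\ldots,v_{n-1}$ lie in $\mathfrak{q}_m$ but $v_m$ does not, the index $m$ cannot belong to $\{0,\ldots,n-1\}$, i.e. $n\leq m$, which is exactly $(3)$.

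The argument is essentially bookkeeping once the dictionary is in place; the only steps that need a little care are the verification that push-forward commutes with the base change to the (localized) coefficient ring $K(m)$ — so that $\op{Im}(\pi^{K(m)}_*)$ really is the image of $I(X)$ — and the honest computation of $\mathfrak{q}_m$, while keeping the endpoint $m=\infty$ separate. I do not expect any obstacle beyond this.
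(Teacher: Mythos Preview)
Your proposal is correct and follows essentially the same approach as the paper: translate anisotropy into containment of $I(X)$ in the appropriate kernel, then invoke Landweber's classification of invariant primes. The paper's proof is a terse three-line cycle $(1)\Rightarrow(2)\Rightarrow(3)\Rightarrow(1)$, where $(2)\Rightarrow(3)$ is phrased as ``$I(X)$ doesn't contain powers of $v_m$'' rather than your explicit computation of $\mathfrak{q}_m$, but this is the same observation; your version simply spells out the dictionary and proves $(1)\Leftrightarrow(3)$ separately, which is a minor organizational difference, not a different route.
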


\begin{proof}
 As $K(m)^*$ is obtained from $P(m)^*$ by change of coefficients,
 $(1)\Rightarrow(2)$. The condition $(2)$ implies that $I(X)$ doesn't contain powers of $v_m$, which implies $(3)$. Finally,
 $(3)$ implies that $I(X)\subset I(m)\Leftrightarrow (1)$.
 \phantom{a}\hspace{5mm}
 \Qed
\end{proof}

Now we can prove the Morava-analogue of the Main Theorem.

\begin{theorem}
 \label{Main-Morava}
 Let $k$ be flexible and $1\leq m\leq\infty$.
 Then $K(m)^*_{iso}=K(m)^*_{Num}$. 
\end{theorem}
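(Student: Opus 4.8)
The plan is to reduce the Morava K-theory statement to the already-proven $P(m)$-version, Theorem \ref{Main}. The only real content here is a transfer of numerical-triviality back and forth between $K(m)^*$ and $P\{m\}^* = P(m)^*[v_m^{-1}]$, together with the observation that anisotropy is insensitive to inverting $v_m$. First I would take a smooth projective $X/k$ and a numerically trivial class $x \in K(m)^*(X)$, and use Proposition \ref{Pm-Km-num-triv} to lift it to a numerically trivial class $\tilde{x} \in P\{m\}^*(X)$. Since $P\{m\}^* = P(m)^*[v_m^{-1}]$ is a localization of the $p$-torsion free theory $P(m)^*$, multiplying $\tilde{x}$ by a sufficiently high power of $v_m$ produces a numerically trivial class $v_m^N \tilde{x}$ in the image of $P(m)^*(X) \to P\{m\}^*(X)$; call a $P(m)^*$-preimage $y$. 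One must check $y$ is itself numerically trivial in $P(m)^*$: this is exactly the ideal-invariance argument already run at the end of the proof of Proposition \ref{Pm-Km-num-triv} — the ideal $J \subset P(m)$ generated by the pairings $\pi_*(y \cdot z)$ is $P(m)$-Landweber-Novikov invariant, its image in $CK(m)$ (hence in $K(m)$) vanishes, so by Landweber \cite[Proposition 2.11]{La73b} it cannot contain a power of $v_m$ and must be zero.

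Next I would apply Theorem \ref{Main}: since $k$ is flexible, $y$ is $P(m)$-anisotropic, i.e. $y = f_*(z)$ for some $f: Z \to X$ with $Z$ a $P(m)$-anisotropic smooth projective variety. By Proposition \ref{anis-Pm-Km-IX}, $P(m)$-anisotropy of $Z$ is equivalent to $K(m)$-anisotropy of $Z$ (both amount to $\sqrt{I(Z)} = I(n)$ for some $1 \leq n \leq m$). Therefore the image of $y$ under $P(m)^*(X) \to K(m)^*(X)$ is a $K(m)$-anisotropic class. Now I trace this image back to $x$: the composite $P(m)^*(X) \to P\{m\}^*(X) \to K(m)^*(X)$ sends $y$ to (a unit multiple of, namely $v_m^N$ times) the class $x$ we started with, and $v_m^N$ is invertible in $K(m)^*$. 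Hence $x$ itself, being a unit multiple of a $K(m)$-anisotropic class, is $K(m)$-anisotropic — using that anisotropic classes form an ideal stable under multiplication by any element, Proposition \ref{A-anis-elt-properties}. This shows every numerically trivial class in $K(m)^*(X)$ is anisotropic, which combined with the always-available surjection $K(m)^*_{iso} \twoheadrightarrow K(m)^*_{Num}$ gives $K(m)^*_{iso} = K(m)^*_{Num}$.

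The one point deserving care — and the main obstacle, such as it is — is the bookkeeping with powers of $v_m$: one must make sure that lifting across $K(m)^* \leftarrow P\{m\}^* \leftarrow P(m)^*$ and then pushing the anisotropy statement forward again does not lose the identification with the original $x$, and that the intermediate class $y$ is genuinely $P(m)$-numerically trivial (not merely $K(m)$-numerically trivial after projection). Both issues are handled by the invariant-ideal/Landweber argument and by the fact that $v_m$ acts invertibly on $K(m)^*$, so no new ideas beyond Propositions \ref{Pm-Km-num-triv} and \ref{anis-Pm-Km-IX} and Theorem \ref{Main} are needed; the proof is essentially a diagram chase through the natural morphisms $BP^* \to P(m)^* \to P\{m\}^* \to K(m)^*$.
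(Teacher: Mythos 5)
Your proposal is correct and follows the same route as the paper: lift a $K(m)$-numerically-trivial class to a $P\{m\}$-numerically-trivial one by Proposition \ref{Pm-Km-num-triv}, apply Theorem \ref{Main} to conclude anisotropy, and transport that anisotropy back along $P(m)^*\row P\{m\}^*\row K(m)^*$ using Proposition \ref{anis-Pm-Km-IX} and invertibility of $v_m$. The one spot I would tighten is your step verifying that the $P(m)^*$-preimage $y$ of $v_m^N\tilde x$ is $P(m)$-numerically trivial: as written, the ideal generated merely by $\pi_*(y\cdot z)$ is not obviously $LN$-invariant (the Proposition's argument uses pairings against all $\pi_*(S^{\ov{r}}_{P(m)}(x)\cdot z)$), but the conclusion is nonetheless immediate either because the class produced \emph{inside} the proof of Proposition \ref{Pm-Km-num-triv} is already exhibited as $P(m)$-numerically trivial before dividing by $v_m$, or more simply because $P(m)=\ff_p[v_m,v_{m+1},\ldots]$ is an integral domain, so $P(m)\hookrightarrow P\{m\}=P(m)[v_m^{-1}]$ and $\pi_*(yz)\in P(m)$ must vanish once its image in $P\{m\}$ does.
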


\begin{proof} We know that any $K(m)$-anisotropic class is $K(m)$-numerically trivial. Conversely, by Proposition 
\ref{Pm-Km-num-triv} any $K(m)$-numerically trivial
class can be lifted to a $P\{m\}$-numerically trivial one, which must be $P\{m\}$-anisotropic by Theorem \ref{Main}. Hence, the projection of it to $K(m)^*$ is $K(m)$-anisotropic, since the notions of $K(m)$ and
$P(m)$-anisotropy are equivalent by 
Proposition \ref{anis-Pm-Km-IX}. 
 \Qed
\end{proof}

\section{Applications}

\subsection{Isotropic Chow motives}

Let $A^*$ be an oriented cohomology theory. The category of $A$-correspondences is some linearization of the category of 
smooth projective
varieties. The objects of $Cor^A(k)$ are smooth projective
varieties over $k$, while morphism are defined as $\Hom_{Cor^A(k)}(X,Y)=A^{\ddim(Y)}(X\times Y)$.
Considering the pseudo-abelian (Karoubian) envelope of $Cor^A(k)$
(that is, adding kernels and cokernels of projectors) and formally
inverting the Tate-motive $T\{1\}$ (which can be produced as a direct summand of the motive $M(\pp^1)=T\oplus T\{1\}$ of a projective line), one obtains the category of {\it $A$-Chow motives} $Chow^A(k)$.
This is a tensor additive category with
$M(X)\oplus M(Y):=M(X\coprod Y)$ and $M(X)\otimes M(Y):=M(X\times Y)$.
In the case $A^*=\CH^*$ of Chow groups, we get the classical category of Chow motives $Chow^{\CH}(k)=Chow(k)$ of Grothendieck.

Any morphism of theories $A^*\stackrel{\mu}{\row}B^*$ (a homomorphism
of rings commuting with both pull-backs and push-forwards) defines the 
natural functor $Chow^A(k)\row Chow^B(k)$. 

Let $A^*$ be a free theory and $A^*_{iso}$ and $A^*_{Num}$ be {\it isotropic} and {\it numerical} versions of it. We have a natural
(surjective) morphism of theories: $A^*_{iso}\twoheadrightarrow A^*_{Num}$. Denote the respective Chow-motivic categories as $Chow^A_{iso}(k)$
and $Chow^A_{Num}(k)$, respectively.
Our Main Theorem \ref{Main} and Theorem \ref{Main-Morava} imply:

\begin{theorem}
 \label{Chow-is-num-Pm}
 Let $k$ be flexible and $0<m\leq\infty$. Then
 \begin{itemize}
 \item[$(1)$] The category of isotropic $P(m)$-Chow motives $Chow^{P(m)}_{iso}(k)$
 is equivalent to the category of numerical $P(m)$-Chow motives
 $Chow^{P(m)}_{Num}(k)$.
 \item[$(2)$] The category of isotropic $K(m)$-Chow motives $Chow^{K(m)}_{iso}(k)$
 is equivalent to the category of numerical $K(m)$-Chow motives
 $Chow^{K(m)}_{Num}(k)$.
 \end{itemize}
\end{theorem}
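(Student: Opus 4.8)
The plan is to deduce this directly from the Main Theorem \ref{Main} and Theorem \ref{Main-Morava}, using the functoriality of the passage from an oriented cohomology theory to its category of Chow motives. Fix $A^*=P(m)^*$ or $A^*=K(m)^*$ with $1\leq m\leq\infty$, so that $A^*$ is $p$-torsion and $A$-anisotropy (hence $A^*_{iso}$) makes sense. As recalled after Definition \ref{iso-theory}, the natural surjection $\pi_A\colon A^*_{iso}\twoheadrightarrow A^*_{Num}$ is a morphism of oriented cohomology theories, so it yields the natural tensor additive functor $F\colon Chow^A_{iso}(k)\to Chow^A_{Num}(k)$; on morphism groups $F$ is the map $A^{\ddim(Y)}_{iso}(X\times Y)\to A^{\ddim(Y)}_{Num}(X\times Y)$ induced by $\pi_A$, and it is the identity on objects.

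Next I would observe that, since $k$ is flexible, Theorem \ref{Main} (resp. Theorem \ref{Main-Morava}) asserts precisely that on $A^*$ the ideal of anisotropic classes coincides with the kernel of the numerical pairing, i.e.\ that $\pi_A$ is an \emph{isomorphism} of theories $A^*_{iso}\xrightarrow{\sim}A^*_{Num}$, not merely a surjection. Evaluated on the products $X\times Y$ of smooth projective varieties this means $\pi_A$ is bijective on all the relevant morphism groups, so the functor $Cor^{A_{iso}}(k)\to Cor^{A_{Num}}(k)$ underlying $F$ is full and faithful; being the identity on objects, it is an isomorphism of tensor additive categories. Since the pseudo-abelian envelope and the formal inversion of the invertible object $T\{1\}$ each send an isomorphism (a fortiori, an equivalence) of tensor additive categories to an equivalence, it follows that $F$ itself is an equivalence of tensor additive categories --- which is part $(1)$ for $A^*=P(m)^*$ and part $(2)$ for $A^*=K(m)^*$.

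I do not anticipate a genuine obstacle: all the mathematical content is contained in Theorem \ref{Main} / \ref{Main-Morava}, whose statement already delivers the isomorphism $\pi_A$ on every product of smooth projective varieties, and the remainder is the formal observation that the two Chow-motivic categories are built by the same functorial recipe applied to theories that now coincide. The one point to keep straight is that $\pi_A$ is a morphism of \emph{theories} --- commuting with push-forwards as well as pull-backs --- so that composition of correspondences is preserved; but this is already part of the set-up in Section \ref{sect-isotr-equiv}.
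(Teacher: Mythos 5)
Your proposal is correct and matches the paper's own argument: both deduce from Theorem \ref{Main} (resp.\ Theorem \ref{Main-Morava}) that the surjection $A^*_{iso}\twoheadrightarrow A^*_{Num}$ is an isomorphism of theories and then invoke the functoriality of the construction $A^*\mapsto Chow^A(k)$. You spell out the formal last step (identity on objects, isomorphism on Hom-groups, compatibility with pseudo-abelianisation and Tate twist) in a bit more detail than the paper, but the content is the same.
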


\begin{proof}
 Indeed, from Theorem \ref{Main} it follows that the projection
 $P(m)^*_{iso}\twoheadrightarrow P(m)^*_{Num}$ is, actually, an 
 isomorphism of theories. Similarly, Theorem \ref{Main-Morava}
 implies that the projection
 $K(m)^*_{iso}\twoheadrightarrow K(m)^*_{Num}$ is an 
 isomorphism of theories.
 \Qed
\end{proof}

\begin{proposition}
 \label{Num-no-zd}
 Let $A^*$ be an oriented cohomology theory whose coefficient ring
 $A$ is an integral domain. Then $\otimes$ has no zero-divisors in
 $Chow^A_{Num}(k)$ (on objects or morphisms).
\end{proposition}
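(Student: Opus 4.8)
The plan is to reduce the assertion to a single elementary fact about the numerical pairing: over an integral domain of coefficients, the external product of two numerically non-trivial classes is again numerically non-trivial. I would first dispose of the statement \emph{on objects}: if $M\otimes N\cong 0$ in $Chow^A_{Num}(k)$, then $\op{id}_M\otimes\op{id}_N=\op{id}_{M\otimes N}=0$, and in an additive category an object whose identity morphism vanishes is a zero object; so it is enough to establish the statement \emph{on morphisms}, namely that $f\otimes g=0$ forces $f=0$ or $g=0$.

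Next I would peel off the formal layers of $Chow^A_{Num}(k)$. An object has the shape $(X,p,n)$ with $X$ smooth projective, $p$ a projector and $n$ a Tate twist; a morphism between two such objects is, after composing with the projectors and after tensoring away the twists (these are $\otimes$-invertible, so tensoring with them is an equivalence), nothing but a class in the appropriate graded piece of $A^*_{Num}(X_1\times X_2)$, and $f=0$ precisely when that class vanishes in $A^*_{Num}$. Moreover the tensor product of correspondences $f'\in A^*_{Num}(X_1\times X_2)$ and $g'\in A^*_{Num}(Y_1\times Y_2)$ is $\tau^*(f'\times g')$, where $\tau\colon X_1\times Y_1\times X_2\times Y_2\to X_1\times X_2\times Y_1\times Y_2$ interchanges the two middle factors; as $\tau$ is an isomorphism of varieties, $\tau^*$ preserves numerical triviality. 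Hence the whole statement comes down to the following: for smooth projective $Z_1,Z_2$ and classes $\alpha\in A^*(Z_1)$, $\beta\in A^*(Z_2)$ which are non-zero in $A^*_{Num}$, the class $\alpha\times\beta$ is non-zero in $A^*_{Num}(Z_1\times Z_2)$.

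To prove this, choose $\alpha'\in A^*(Z_1)$ and $\beta'\in A^*(Z_2)$ with $a:=(\pi_{Z_1})_*(\alpha\cdot\alpha')\neq 0$ and $b:=(\pi_{Z_2})_*(\beta\cdot\beta')\neq 0$ in $A$, where $\pi_{Z_i}$ are the structure maps. Writing $\alpha\times\beta=\op{pr}_1^*\alpha\cdot\op{pr}_2^*\beta$ and using commutativity of $A^*(Z_1\times Z_2)$ one gets $(\alpha\times\beta)\cdot(\alpha'\times\beta')=(\alpha\cdot\alpha')\times(\beta\cdot\beta')$, and the compatibility of proper push-forward with external products — which follows from the base-change axiom $(A2)$ and the projection formula applied to the transversal cartesian square expressing $\op{pr}_2$ as the pull-back of $\pi_{Z_1}$ along $\pi_{Z_2}$ — gives $(\pi_{Z_1\times Z_2})_*\big((\alpha\cdot\alpha')\times(\beta\cdot\beta')\big)=a\cdot b$. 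Since $A$ is an integral domain and $a,b\neq 0$, we have $ab\neq 0$, so $\alpha\times\beta$ pairs non-trivially on $Z_1\times Z_2$ and is therefore non-zero in $A^*_{Num}(Z_1\times Z_2)$; this settles the reduced statement and, with it, the Proposition.

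The main obstacle is not the computation of the last paragraph, which is immediate, but the bookkeeping of the second: keeping precise track of how $\otimes$ acts on Hom-groups after passing to the Karoubian envelope and inverting the Tate twist (the reordering isomorphism $\tau$, the Tate degrees, the projectors), together with a correct citation of the compatibility of proper push-forward with external products in the generality of oriented cohomology theories with localisation. Once those routine but fiddly identifications are in place, the integral-domain hypothesis is used in exactly one step and closes the argument.
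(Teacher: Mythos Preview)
Your proposal is correct and follows essentially the same approach as the paper: both reduce to the key fact that the external product of two numerically non-trivial classes is numerically non-trivial, proved by pairing against $\alpha'\times\beta'$ and using that $A$ is an integral domain. The paper is terser---it handles objects directly via their defining projectors rather than through identity morphisms, and suppresses the bookkeeping you spell out about Tate twists and the reordering isomorphism---but the mathematical content is the same.
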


\begin{proof}
 Let $\alpha_i\in A^*_{Num}(X_i)$, for $i=1,2$ be non-zero classes.
 Then $\alpha_1\times\alpha_2\in A^*_{Num}(X_1\times X_2)$ is non-zero.
 Indeed, since, $\alpha_i\neq 0$ numerically, there exist 
 $\beta_i\in A^*_{Num}(X_i)$, such that $\la\alpha_i,\beta_i\ra\neq 0\in A$. But then $\la\alpha_1\times\alpha_2,\beta_1\times\beta_2\ra
 =\la\alpha_1,\beta_1\ra\cdot\la\alpha_2,\beta_2\ra\neq 0\in A$,
 since $A$ is an integral domain. Thus, $\alpha_1\times\alpha_2$ is
 numerically non-trivial. Non-zero objects $U_i$, $i=1,2$
 of $Chow^A_{Num}(k)$ are given by non-trivial projectors $\rho_i\in A^*_{Num}(Y_i^{\times 2})$ in the motives of some varities $Y_i$.
 Then $U_1\otimes U_2$ is given by a non-zero projector 
 $\rho_1\times\rho_2$ in $A^*_{Num}((Y_1\times Y_2)^{\times 2})$.
 \Qed
\end{proof}

For $0<m<\infty$, the coefficient ring of $P(m)^*$ is the polynomial
ring $\ff_p[v_m,v_{m+1},\ldots]$ in infinitely many variables over $\ff_p$, while for $m=\infty$, it is $\ff_p$. In any case, it is an integral domain. The same applies to the coefficient ring of Morava K-theory. Thus, combining Theorem \ref{Chow-is-num-Pm} and 
Proposition \ref{Num-no-zd}, we obtain:

\begin{theorem}
 \label{Iso-no-zd}
 Let $k$ be flexible, and $m>0$. Then 
 \begin{itemize}
 \item[$(1)$] The category $Chow^{P(m)}_{iso}(k)$
 of isotropic $P(m)$-Chow motives has no $\otimes$-zero-divisors (on objects or morphisms).
 \item[$(2)$] The category $Chow^{K(m)}_{iso}(k)$
 of isotropic Morava-Chow motives has no $\otimes$-zero-divisors (on objects or morphisms).
 \end{itemize}
 \Qed
\end{theorem}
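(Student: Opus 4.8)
The plan is to deduce this directly from the two results just assembled: the identification of isotropic with numerical $A$-Chow motives (Theorem \ref{Chow-is-num-Pm}) and the absence of $\otimes$-zero-divisors in numerical Chow motives over a domain (Proposition \ref{Num-no-zd}). So the only real content is a bookkeeping check on coefficient rings, followed by transport of structure along an equivalence of tensor additive categories.

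First I would pin down the coefficient rings. For $0<m<\infty$ the theory $P(m)^*=BP^*\otimes_{BP}BP/I(m)$ has coefficient ring $P(m)=\ff_p[v_m,v_{m+1},\ldots]$, a polynomial ring over $\ff_p$, hence an integral domain; for $m=\infty$ it is $P(\infty)=\ff_p$. Likewise $K(m)=\ff_p[v_m,v_m^{-1}]$ for finite $m$ (a localisation of a polynomial ring over $\ff_p$, again a domain) and $K(\infty)=\ff_p$. In every case the coefficient ring is an integral domain, so Proposition \ref{Num-no-zd} applies verbatim and gives that $\otimes$ has no zero-divisors — on objects or on morphisms — in $Chow^{P(m)}_{Num}(k)$ and in $Chow^{K(m)}_{Num}(k)$.

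Next I would invoke Theorem \ref{Chow-is-num-Pm}, which, using the hypothesis that $k$ is flexible, furnishes equivalences of tensor additive categories $Chow^{P(m)}_{iso}(k)\simeq Chow^{P(m)}_{Num}(k)$ and $Chow^{K(m)}_{iso}(k)\simeq Chow^{K(m)}_{Num}(k)$. Since ``having no $\otimes$-zero-divisors'' is a property formulated entirely in terms of the tensor structure and the additive (in particular pointed) structure — a nonzero object or morphism whose tensor with some other nonzero object or morphism vanishes — it is preserved under any equivalence of tensor additive categories. Transporting the conclusion of the previous paragraph along these equivalences yields both assertions $(1)$ and $(2)$.

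There is no genuine obstacle here: all the difficulty has been front-loaded into Theorem \ref{Main} and Theorem \ref{Main-Morava} (which power Theorem \ref{Chow-is-num-Pm}) and into the elementary Proposition \ref{Num-no-zd}. The one point deserving a word of care is that Theorem \ref{Chow-is-num-Pm} must be used as an equivalence of \emph{tensor} categories and not merely of additive categories, so that the monoidal structure — and hence the very notion of zero-divisor — is matched on the two sides; this is exactly what that theorem provides, since the morphism of theories $A^*_{iso}\twoheadrightarrow A^*_{Num}$ is there shown to be an isomorphism of oriented cohomology theories, inducing an isomorphism of the corresponding categories of correspondences compatible with external products.
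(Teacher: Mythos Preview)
Your proposal is correct and matches the paper's approach exactly: the paper observes that the coefficient rings $P(m)$ and $K(m)$ are integral domains (polynomial rings over $\ff_p$, or $\ff_p$ itself, or a Laurent polynomial ring) and then simply combines Theorem \ref{Chow-is-num-Pm} with Proposition \ref{Num-no-zd}. There is nothing more to add.
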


When $m=\infty$, the category $Chow^{P(\infty)}_{iso}(k)$ is the {\it isotropic Chow motivic category} $Chow(k/k;\ff_p)$ of \cite{Iso}. 
As a particular case of Theorems \ref{Chow-is-num-Pm} and \ref{Iso-no-zd}
we get:

\begin{corollary}
 \label{is-Chow-num-Chow}
 Let $k$ be flexible. Then the category $Chow(k/k;\ff_p)$ of isotropic Chow motives - \cite{Iso} is equivalent to the category 
 $Chow_{Num}(k;\ff_p)$ of numerical
 Chow motives (with $\ff_p$-coefficients).
 In particular, the category $Chow(k/k;\ff_p)$ has no $\otimes$-zero-divisors.
\end{corollary}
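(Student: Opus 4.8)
The plan is to obtain this as the $m=\infty$ case of Theorems \ref{Chow-is-num-Pm} and \ref{Iso-no-zd}, after matching up the categories involved. First I would recall that $P(\infty)^*=\Ch^*=\CH^*/p$, so $Chow^{P(\infty)}(k)$ is the category $Chow(k;\ff_p)$ of Chow motives with $\ff_p$-coefficients; its isotropic version $Chow^{P(\infty)}_{iso}(k)$ is, by construction, the category whose Hom groups are the isotropic Chow groups $\Ch^*_{iso}=\Ch^*_{k/k}$ of \cite{Iso}, that is, the isotropic Chow motivic category $Chow(k/k;\ff_p)$ (cf. Proposition \ref{ws-iso-mot}); and its numerical version $Chow^{P(\infty)}_{Num}(k)$ is, by definition, the category $Chow_{Num}(k;\ff_p)$ of numerical Chow motives with $\ff_p$-coefficients.

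With these identifications the equivalence $Chow(k/k;\ff_p)\simeq Chow_{Num}(k;\ff_p)$ is exactly Theorem \ref{Chow-is-num-Pm}$(1)$ for $m=\infty$: by Theorem \ref{conj}, the surjection of theories $\Ch^*_{iso}\twoheadrightarrow\Ch^*_{Num}$ is an isomorphism, hence the functor $Chow^{\Ch}_{iso}(k)\row Chow^{\Ch}_{Num}(k)$ it induces is an equivalence. For the final assertion, the coefficient ring of $\Ch^*$ is $\ff_p$, which is an integral domain, so Proposition \ref{Num-no-zd} shows that $Chow_{Num}(k;\ff_p)$ has no $\otimes$-zero-divisors, on objects or on morphisms; transporting along the equivalence just obtained — equivalently, invoking Theorem \ref{Iso-no-zd}$(1)$ at $m=\infty$ — gives the same for $Chow(k/k;\ff_p)$.

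The only point that is not a purely formal consequence of the results above is the identification $Chow^{P(\infty)}_{iso}(k)=Chow(k/k;\ff_p)$: one needs to know that the isotropic Chow motivic category, defined abstractly as the heart of the weight structure on $DM(k/k;\ff_p)$ (equivalently, as direct summands of isotropic motives of smooth projective varieties), has its Hom groups computed by the isotropic Chow groups $\Ch^*_{iso}$. This is, however, already established in \cite{Iso}, so no further work is required and the corollary follows at once. I expect no genuine obstacle here; the mathematical substance sits entirely in Theorems \ref{conj}, \ref{Chow-is-num-Pm} and \ref{Iso-no-zd}.
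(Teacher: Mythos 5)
Your proposal is correct and follows exactly the paper's route: the corollary is obtained as the $m=\infty$ specialisation of Theorems \ref{Chow-is-num-Pm} and \ref{Iso-no-zd}, after noting the identification $Chow^{P(\infty)}_{iso}(k)=Chow(k/k;\ff_p)$ (which the paper, like you, attributes to the constructions in \cite{Iso}). Nothing to add.
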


Since Homs in the category $Chow(k/k;\ff_p)$ are isotropic Chow groups,
Corollary \ref{fin-is-Chow-gr} gives:

\begin{corollary}
 \label{Ch-mot-fin-mor}
 Let $k$ be flexible. Then Homs in the category $Chow(k/k;\ff_p)$
 are finite groups.
\end{corollary}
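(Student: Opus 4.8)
The plan is to reduce the statement directly to Corollary \ref{fin-is-Chow-gr}, since the Hom groups of $Chow(k/k;\ff_p) = Chow^{P(\infty)}_{iso}(k)$ are, after the purely formal operations of passing to the pseudo-abelian envelope and inverting $T\{1\}$, precisely (graded components of) isotropic Chow groups of products of smooth projective varieties.

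First I would unwind the construction of the category recalled at the start of this section. One starts from the category of $\Ch_{k/k}$-correspondences, whose objects are the smooth projective varieties over $k$ and for which $\Hom(X,Y) = \Ch^{\ddim(Y)}_{k/k}(X\times Y)$; then one forms the Karoubian envelope and formally inverts the Tate motive $T\{1\}$. Consequently every object of $Chow(k/k;\ff_p)$ is a direct summand of a finite direct sum of objects $M(X)\{a\}$ with $X$ smooth projective and $a\in\zz$, and for two such generators $\Hom(M(X)\{a\}, M(Y)\{b\})$ is a single graded component (up to Tate shift) of $\Ch^*_{k/k}(X\times Y)$, with the usual convention that components of negative or too high codimension vanish.

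The second step is to invoke Corollary \ref{fin-is-Chow-gr}: since $k$ is flexible and $X\times Y$ is a smooth projective $k$-variety, $\Ch^*_{k/k}(X\times Y)$ is a finite group in every degree, so each basic Hom group above is finite. Finally, for arbitrary objects $U$ and $V$ of $Chow(k/k;\ff_p)$ one writes them as images of idempotents $\rho$ and $\sigma$ inside finite direct sums of generators; then $\Hom_{Chow(k/k;\ff_p)}(U,V)$ is the image of the idempotent endomorphism $\gamma\mapsto\sigma\circ\gamma\circ\rho$ acting on a finite direct sum of the basic Hom groups, hence a direct summand of a finite group, hence finite.

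There is no genuine obstacle here: all the content is carried by Corollary \ref{fin-is-Chow-gr} (which in turn rests on Theorem \ref{conj} together with the topological realisation argument), and what remains is only the routine bookkeeping of the pseudo-abelian envelope and the $\otimes$-inversion of $T\{1\}$, which affects the grading but not finiteness.
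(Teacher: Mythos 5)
Your proposal is correct and follows exactly the same route as the paper: the paper simply observes that Homs in $Chow(k/k;\ff_p)$ are isotropic Chow groups and then invokes Corollary \ref{fin-is-Chow-gr}. You spell out the routine bookkeeping of the Karoubian envelope and Tate twists, but the substance and the reduction are identical.
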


The category $Chow(k/k;\ff_p)$ is a full tensor additive subcategory of
the compact part of the triangulated category $DM(k/k;\ff_p)$ of 
{\it isotropic motives} - \cite{Iso}. The above Corollary provides support for the following conjecture.

\begin{conj}
 \label{isotr-mot-fin-mor-conj}
 Let $k$ be flexible. Then Homs between compact objects of the isotropic
 motivic category $DM(k/k;\ff_p)$ are finite groups.
\end{conj}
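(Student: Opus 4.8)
\medskip
\noindent\emph{Proof proposal.}\ The plan is to use the weight structure of Proposition \ref{ws-iso-mot} to reduce the statement to the finiteness we already have in the heart. Since $DM(k/k;\ff_p)^c$ carries a bounded weight structure with heart $Chow(k/k;\ff_p)$, there is, by \cite{Bon}, a weight complex functor sending each compact object to a bounded complex of isotropic Chow motives. For compact $X$ and $Y$, I would resolve first $X$ and then $Y$ by their weight complexes; the standard weight-structure spectral-sequence machinery then expresses $\Hom_{DM(k/k;\ff_p)}(X,Y)$ in terms of groups $\Hom_{DM(k/k;\ff_p)}(M,N[n])$ with $M,N$ in the heart. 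Because the weight complexes are bounded, only finitely many such $\Hom$-groups (and finitely many shifts $n$) contribute in each total degree, and the resulting filtration on $\Hom_{DM(k/k;\ff_p)}(X,Y)$ is finite. Hence it suffices to prove finiteness of $\Hom_{DM(k/k;\ff_p)}(M,N[n])$ for all isotropic Chow motives $M,N$ and all $n\in\zz$.

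Every isotropic Chow motive is a direct summand of $M(U)(s)[2s]$ for a smooth projective $U$ and $s\in\zz$, and $M(U)(s)[2s]$ is, up to a Tate twist, self-dual; so $\Hom_{DM(k/k;\ff_p)}(M,N[n])$ is a direct summand of an \emph{isotropic motivic cohomology} group $H^{a,b}_{iso}(U\times U';\ff_p)$ of a product of smooth projective varieties, with $n=a-2b$ after the twist bookkeeping. I would then split into three cases. For $n>0$, i.e.\ $a>2b$: the group vanishes --- these would be higher Chow groups of negative level, and the vanishing is the orthogonality axiom of the weight structure applied to the heart. For $n=0$, i.e.\ $a=2b$: the group is a $\Hom$-group in $Chow(k/k;\ff_p)$, hence an isotropic Chow group $\Ch^*_{k/k}$ of a smooth projective variety, which is \emph{finite} by Corollary \ref{fin-is-Chow-gr} (so, ultimately, by Theorem \ref{conj}). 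This leaves $n<0$, i.e.\ $0\leq a<2b$: the whole problem reduces to showing that the isotropic motivic cohomology $H^{a,b}_{iso}(W;\ff_p)$ of a smooth projective $W$ over a flexible field is finite whenever $a<2b$.

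For this last finiteness I would argue according to $a$. When $a\leq b$, the Beilinson--Lichtenbaum theorem identifies the global group $H^{a,b}(W;\ff_p)$ with \'etale cohomology $H^a_{et}(W;\mu_p^{\otimes b})$, and I would run the spreading-out argument of Corollary \ref{fin-is-Chow-gr}: $W$ and the classes are defined over a finitely generated subfield $l\subset k$, the group is a colimit over finitely generated intermediate fields $F$, and each term maps by a complex (or \'etale) realisation to the \emph{finite} group $H^a_{et}(\ov{W}_{\ov{l}};\mu_p^{\otimes b})$; the crux is then that the kernel of this realisation is killed isotropically --- equivalently, that a class dying over $\ov{l}$, hence over a finite extension of $p$-power degree, is anisotropic --- where one can exploit that the kernel of restriction along a degree-$p$ extension is a cup product with a degree-$1$ class, and that such classes, being norms, are anisotropic. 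When $b<a<2b$ there is no direct \'etale description; I would instead run the coniveau (Gersten) spectral sequence to reduce the finiteness of $H^{a,b}_{iso}(W;\ff_p)$ to finiteness of isotropic motivic cohomology of function fields of closed subvarieties, and ultimately to the finiteness, in each bidegree, of the isotropic motivic cohomology of flexible fields --- a statement conjectured and verified in low degrees in \cite{Iso}.

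The hard part --- the reason this remains a conjecture --- is exactly this last step: it is a finiteness statement for \emph{all} of isotropic motivic cohomology of smooth projective varieties, equivalently the expectation that isotropic motivic cohomology is a subquotient of a topological realisation. This is a genuine higher-weight analogue of Theorem \ref{conj}, and at present it is open. Thus the argument above does not settle Conjecture \ref{isotr-mot-fin-mor-conj} outright, but reduces it to, and shows it essentially equivalent to, the finiteness of isotropic motivic cohomology.
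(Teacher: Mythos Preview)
The statement you are addressing is stated in the paper as a \emph{Conjecture}, not a theorem; there is no proof to compare against. The paper offers only evidence: Corollary~\ref{Ch-mot-fin-mor} (finiteness of Homs in the heart $Chow(k/k;\ff_p)$), and the computation of isotropic motivic cohomology of a point for $p=2$ from \cite[Theorem~3.7]{Iso}. Your weight-structure reduction to $\Hom_{DM(k/k;\ff_p)}(M,N[n])$ for $M,N$ in the heart is correct and is exactly the framing implicit in the paper: your $n>0$ vanishing is what is checked in the proof of Proposition~\ref{ws-iso-mot}, and your $n=0$ case is precisely Corollary~\ref{Ch-mot-fin-mor}. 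So up to this point your outline and the paper's perspective coincide, and you have correctly isolated the open part as the range $n<0$, i.e.\ finiteness of isotropic motivic cohomology $H^{a,b}_{iso}$ with $a<2b$.

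Where you go beyond the paper is in sketching an attack on the $a\leq b$ subcase, and here the argument has a real gap. Beilinson--Lichtenbaum identifies the \emph{global} group with \'etale cohomology, but you then need that every class in the kernel of $H^{a,b}(W;\ff_p)\to H^a_{et}(\ov{W}_{\ov{l}};\mu_p^{\otimes b})$ is \emph{anisotropic}. Your sketch (``dies over a degree-$p$ extension, hence is a cup product with a degree-$1$ class, and such classes, being norms, are anisotropic'') does not go through as stated: the description of the restriction kernel as cup-product with a single degree-$1$ class is a statement about Milnor $K$-theory of \emph{fields}, not about motivic cohomology of a smooth projective $W$; and even granting such a description, a symbol $\{\alpha\}$ is not automatically ``anisotropic'' in the sense used here (a push-forward from a $p$-anisotropic variety). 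For $b<a<2b$ you honestly reduce to finiteness of isotropic motivic cohomology of flexible fields in all bidegrees, which is itself open beyond $p=2$. Your concluding paragraph is therefore the accurate summary: you have reproduced the reasons this is a conjecture, not a proof of it; I would only caution against presenting the $a\leq b$ part as nearly settled.
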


Another piece of evidence in support of this Conjecture is given by
the calculation of isotropic motivic cohomology of a point (that is, Homs
between Tate objects in $DM(k/k;\ff_p)$). This was done for $p=2$ in 
\cite[Theorem 3.7]{Iso}.

\subsection{Isotropic realisations}
\label{sect-isotr-realis}

In \cite{Iso} the {\it isotropic realisations} of the Voevodsky triangulated category of motives $DM(k)$ were constructed (see also \cite[Introduction]{IN}). Such realisations are parametrized by the
choice of a prime number $p$ and an equivalence class of extensions of the ground field. 

Let $k$ be an arbitrary field (of characteristic zero) and $\Ch^*=\CH^*/p$ be Chow groups modulo $p$. Define the
following partial ordering on the set of field
extensions $E/k$. 
Let $E=\operatornamewithlimits{colim}_{\alpha}E_{\alpha}$ and
$F=\operatornamewithlimits{colim}_{\beta}F_{\beta}$, where
$E_{\alpha}=k(Q_{\alpha})$ and $F_{\beta}=k(P_{\beta})$ are
finitely generated extensions with smooth models $Q_{\alpha}$
and $P_{\beta}$.
We say that $E/k\stackrel{p}{\geq}F/k$, if for any $\beta$,
there exists $\alpha$ and a
correspondence $Q_{\alpha}\rightsquigarrow P_{\beta}$ of degree prime to $p$
(equivalently, the push-forward $\Ch_*(Q_{\alpha}\times P_{\beta})\twoheadrightarrow
\Ch_*(Q_{\alpha})$ is surjective). We say that $E/k\stackrel{p}{\sim}F/k$,
if $E/k\stackrel{p}{\geq}F/k$ and $F/k\stackrel{p}{\geq}E/k$.
For a field $E$, denote as $\wt{E}=E(\pp^{\infty})$ the {\it flexible closure} of $E$. Finally, for a
field $L$, define the {\it isotropic motivic category} $DM(L/L;\ff_p)$
as the Verdier localisation of $DM(L;\ff_p)$ by the localising subcategory
generated by motives of $p$-anisotropic (= $\Ch^*$-anisotropic) 
varieties over $L$ - see \cite[Def. 2.4, Rem. 2.8]{Iso}. We get the family of {\it isotropic realisations} 
$$
\psi_{p,E}: DM(k)\lrow DM(\wt{E}/\wt{E};\ff_p),
$$
where $p$ is an arbitrary prime and $E$ runs over all equivalence classes of the $\stackrel{p}{\sim}$ relation above. Note that our realisations take values
in isotropic motivic categories over flexible fields.

Corollary \ref{is-Chow-num-Chow} permits to reveal the meaning of these functors. They provide the {\it points} of the {\it Balmer spectrum} 
$Spc(DM(k)^c)$ - \cite{Bal} of (the compact part of) the Voevodsky motivic category. 
Recall, that the points of the Balmer spectrum of a tensor triangulated
category are prime $\otimes-\triangle$-ed ideals in it. In our case, the
ideal ${\frak a}_{p,E}=\psi_{p,E}^{-1}(0)$ is the pre-image of the
zero ideal. It is prime, since (the compact part of) the isotropic
(flexible) motivic category $DM(\wt{E}/\wt{E};\ff_p)$ has no zero-divisors by Corollary \ref{is-Chow-num-Chow} as we will prove below.

It was shown by Bondarko \cite{Bon}, that on $DM(k)^c$ there is a 
unique bounded non-degenerate {\it weight structure} whose heart is
the category of Chow motives $Chow(k)$. A similar structure exists
on the category $DM(k/k;\ff_p)^c$ of isotropic motives.

\begin{proposition}
 \label{ws-iso-mot}
  The compact part $DM(k/k;\ff_p)^c$ of isotropic motivic category possesses a bounded non-degenerate weight structure whose heart is the category of isotropic Chow motives $Chow(k/k;\ff_p)$.  
\end{proposition}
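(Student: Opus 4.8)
The plan is to transport the weight structure along the Verdier localisation functor $\pi\colon DM(k;\ff_p)^c\row DM(k/k;\ff_p)^c$. By the result of Bondarko \cite{Bon}, the global category $DM(k;\ff_p)^c$ carries a bounded non-degenerate weight structure whose heart is $Chow(k;\ff_p)$, generated (as a weight structure) by the Tate twists of motives of smooth projective varieties. The kernel of $\pi$ is the localising subcategory generated by the motives of $p$-anisotropic varieties; the key point is that this kernel is itself generated by \emph{weight-zero} objects (direct summands of motives of smooth projective varieties), hence is ``weight-compatible''. First I would invoke the general machinery of Bondarko on localisations of weight structures: if $\cal{C}\row\cal{C}/\cal{D}$ is a Verdier quotient of a category with weight structure by a triangulated subcategory $\cal{D}$ that is generated by objects of the heart (more precisely, such that $\cal{D}$ is itself endowed with the restricted weight structure and its heart sits inside the heart of $\cal{C}$), then $\cal{C}/\cal{D}$ inherits a weight structure whose heart is the idempotent completion of the image of the heart of $\cal{C}$. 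This gives at once that $DM(k/k;\ff_p)^c$ has a weight structure whose heart consists of direct summands of isotropic motives of smooth projective varieties, i.e. is exactly $Chow(k/k;\ff_p)$ as defined via $\Ch^*_{iso}$-correspondences.

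The steps, in order, are: (1) recall the global weight structure and the fact that the motive of every smooth projective variety lies in the heart; (2) observe that the localising subcategory we quotient by is generated by such motives, so its compact part is generated by objects of weight zero, and therefore the quotient weight structure exists by \cite{Bon}; (3) identify the heart of the quotient: morphisms in the quotient between motives of smooth projective $X,Y$ are computed as the colimit over ``sufficiently large'' objects, but using the explicit description of the localisation from \cite[Section 2]{Iso} one sees that $\Hom_{DM(k/k)}(M(X),M(Y)\{*\}[*'])$ vanishes outside degree $*'=0$ and equals $\Ch^{\dim Y+*}_{iso}(X\times Y)$ there — this is precisely the statement that isotropic Chow groups compute the Homs, which is already known in \cite{Iso}; (4) conclude that the heart is the pseudo-abelian envelope of the category of $\Ch_{iso}$-correspondences with Tate twist inverted, i.e. $Chow(k/k;\ff_p)$; (5) check boundedness and non-degeneracy: boundedness is inherited because every object in $DM(k/k;\ff_p)^c$ is the image of a bounded object upstairs, and non-degeneracy follows from the fact that the weight-zero part generates and there are no ``infinitely negative'' or ``infinitely positive'' pieces after localisation (here one can use that the weight complex functor on $DM(k;\ff_p)^c$ descends).

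The main obstacle I anticipate is step (3), the identification of the heart — specifically, verifying that the localisation does not create unwanted morphisms in non-zero twist/shift degrees between motives of smooth projective varieties, so that the heart really is $Chow(k/k;\ff_p)$ and not something larger. This requires a careful analysis of the Verdier quotient, for which the natural tool is the fact (from \cite{Iso}) that $DM(k/k;\ff_p)$ is a well-behaved localisation in which the motivic cohomology of smooth projective varieties is controlled, together with the general principle that a localisation by a subcategory generated by the heart is ``weight-exact'' and induces an isomorphism on $\Hom$ of heart-objects after passing to the localised $\Ch$-groups. A secondary technical point is checking that the negative-weight and positive-weight orthogonality axioms descend; this is routine once one knows the heart, since orthogonality of $Chow(k/k;\ff_p)^{\leq 0}$ and $Chow(k/k;\ff_p)^{\geq 1}$ follows from the vanishing of negative isotropic Ext groups, which is inherited from the global vanishing together with the weight-exactness of $\pi$.
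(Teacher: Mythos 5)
Your overall strategy — transport the global weight structure of Bondarko along the localisation to $DM(k/k;\ff_p)^c$ — is different in spirit from what the paper does, and it has a real gap at exactly the step you flag as the ``main obstacle.'' The paper does not run an abstract Verdier-quotient argument. Instead it uses the identification (from \cite[Proposition 2.5]{Iso}) of $DM(k/k;\ff_p)^c$ with a full thick subcategory of $DM(k;\ff_p)$, namely the idempotent completion of $\wt{\cal X}_Q\otimes DM(k;\ff_p)^c$, where $\wt{\cal X}_Q$ is the reduced \v{C}ech tensor projector. It then defines the candidate ${\cal D}_{\leq 0}$, ${\cal D}_{\geq 0}$ by hand and, invoking \cite[Theorem 4.3.2, Proposition 5.2.2]{Bon}, reduces the existence of the weight structure to the negativity condition $\Hom(\bar U,\bar V[i])=0$ for isotropic Chow motives and $i>0$. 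That vanishing is then proved concretely: the semi-orthogonal decomposition via ${\cal X}_Q$ and $\wt{\cal X}_Q$ (\cite[Theorem 2.3.2]{IMQ}) rewrites the Hom as $H^{\cal M}_{-i,0}(\wt{\cal X}_Q\otimes U^\vee\otimes V;\ff_p)$, and one uses that smooth simplicial schemes have no motivic homology $H^{\cal M}_{a,b}$ with $a<2b$.

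Two issues with your proposal. First, in step (3) you assert that $\Hom_{DM(k/k)}(M(X),M(Y)\{*\}[*'])$ vanishes ``outside degree $*'=0$''. That is false: just as in the global category, the negative-shift Homs ($i<0$, i.e.\ the analogues of higher motivic cohomology of $X\times Y$) are generally nonzero; what one has, and what the axioms require, is only the vanishing for $i>0$. Second, and more importantly, that $i>0$ vanishing is the entire content of the proof, and you do not supply it — you defer to an unreferenced ``general principle'' that localisation by a heart-generated subcategory is weight-exact, and to the claim that the Hom computation ``is already known in \cite{Iso}.'' Neither resolves the issue: the weight-exactness of this particular localisation is equivalent to the negativity you must establish, and the Hom vanishing is what the paper proves here (via the simplicial-scheme argument), not something imported from \cite{Iso}. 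So the argument is circular at precisely the point you identified as the crux. A further technical caveat: $DM(k/k;\ff_p)^c$ is the compact part of a localisation of the big category, which a priori is only the idempotent completion of the Verdier quotient of compacts; the projector description used in the paper handles this cleanly, whereas your quotient approach would need an extra step to account for the idempotent completion.
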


\begin{proof}
 Recall from \cite[Proposition 2.5]{Iso} that the compact part 
 $DM(k/k;\ff_p)^c$ of the isotropic motivic category can be realized as a full thick subcategory
 of $DM(k;\ff_p)$ which is an idempotent completion of the subcategory 
 consisting of objects of the form 
 $\wt{\cal X}_{Q}\otimes U$, for all $U\in DM(k;\ff_p)^c$, where
 $\wt{\cal X}_{Q}$ is the reduced motive of the \v{C}ech simplicial
 scheme $\check{C}ech(Q)$, where $Q$ is the disjoint union of all
 smooth $p$-anisotropic varieties over $k$. 
 Here $\wt{\cal X}_{Q}$ is a tensor projector fitting the distinguished triangle
$$
{\cal X}_{Q}\row T\row\wt{\cal X}_{Q}\row {\cal X}_{Q}[1],
$$ 
where ${\cal X}_Q$ is the non-reduced motive of our \v{C}ech simplicial scheme (the complementary projector).
 The subcategory of
 isotropic Chow motives $Chow(k/k;\ff_p)$ is the idempotent completion
 of the subcategory consisting of objects $\wt{\cal X}_{Q}\otimes U$,
 where $U$ is a Chow motive. Since every compact Voevodsky motive is
 an extension of finitely many shifts of Chow motives, the same is
 true about isotropic motives. Following Bondarko, 
 let's define the ${\cal{D}}_{\leq 0}$
 as the idempotent completion of the subcategory of $DM(k/k;\ff_p)$ consisting of all possible extensions (of finitely many) objects of the type $\bar{U}[i]$, where 
 $\bar{U}$ is an isotropic Chow motive and $i\leq 0$, and ${\cal{D}}_{\geq 0}$
 as the idempotent completion of the subcategory of $DM(k/k;\ff_p)$ consisting of all possible extensions (of finitely many) objects of the type $\bar{U}[i]$, where 
 $i\geq 0$. To show that this is indeed a {\it weight structure}, 
 by \cite[Theorem 4.3.2, Proposition 5.2.2]{Bon}, one
 only needs to check that $Hom_{DM(k/k;\ff_p)}(\bar{U},\bar{V}[i])=0$, for any
 isotropic Chow motives $\bar{U}$ and $\bar{V}$ and any $i>0$. 
 
 If 
 $\bar{U}=\wt{\cal X}_{Q}\otimes U$ and 
 $\bar{V}=\wt{\cal X}_{Q}\otimes V$, for (global) Chow motives $U$ and $V$, then 
 $$
Hom_{DM(k/k;\ff_p)}(\bar{U},\bar{V}[i])=
 Hom_{DM(k;\ff_p)}(\wt{\cal X}_{Q}\otimes U,\wt{\cal X}_{Q}\otimes V[i]).
$$
 As $\otimes$-projectors ${\cal X}_{Q}$ and $\wt{\cal X}_{Q}$ define a 
 semi-orthogonal decomposition of the category $DM(k;\ff_p)$ (by 
 \cite[Theorem 2.3.2]{IMQ}), this is equal to:
 $$
Hom_{DM(k;\ff_p)}(T[-i],\wt{\cal X}_{Q}\otimes U^{\vee}\otimes V)=
 H^{{\cal M}}_{-i,0}(\wt{\cal X}_{Q}\otimes U^{\vee}\otimes V;\ff_p).
$$
  But $U^{\vee}\otimes V$ is a direct summand in $M(Y)(j)[2j]$, for some smooth projective variety $Y$ and some $j\in\zz$, while smooth simplicial schemes don't have
 motivic homology $H^{{\cal M}}_{a,b}$, for $a<2b$. Thus, the group in
 question is zero and we obtain the needed weight structure.
 \Qed
\end{proof}

A weight filtration presents an object $X$ as an extension of its graded pieces $X_i$, where $X_i=C_i[i]$, for some objects $C_i$ of the heart.
The latter objects form a {\it weight complex} $t(X)$ - \cite[Def. 2.2.1]{Bon}
$$
\ldots\lrow C_{i+1}\stackrel{d_{i+1}}{\lrow} C_i\stackrel{d_i}{\lrow} C_{i-1}\lrow\ldots
$$
For a bounded weight structure, this complex will be finite.
In contrast to the {\it $t$-structure} filtration, the filtration given
by a weight structure is not canonical. In particular, the weight complex
$t(X)$ is not uniquely defined. But, if for a given $X$, there exists
a weight complex with zero differentials, then 
the respective terms $C_i$ are invariants of $X$ and 
can be considered as {\it weight cohomology} $H^i_w(X)$ of it. In particular, in this case, $X$ is zero if and only if all $H^i_w(X)$s are.
The next statement shows that our case is exactly such. 

\begin{proposition}
\label{wt-zero-dif}
Every object $X$ of $DM(k/k;\ff_p)^c$ possesses a weight filtration producing weight complex with zero differentials.
\end{proposition}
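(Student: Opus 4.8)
The plan is to exploit the semisimplicity of the heart $Chow(k/k;\ff_p)$, which is now available thanks to Corollary \ref{is-Chow-num-Chow}: the category of isotropic Chow motives is equivalent to the category of numerical Chow motives, and the latter is a semisimple abelian category (Homs are finite $\ff_p$-vector spaces, idempotents split, and there are no $\otimes$-zero-divisors). The point is that in a semisimple heart every short exact sequence splits, so there is no room for nonzero weight differentials. More precisely, I would argue as follows. Fix $X\in DM(k/k;\ff_p)^c$ and any choice of weight filtration with weight complex $t(X)=(\ldots\row C_{i+1}\stackrel{d_{i+1}}{\row}C_i\stackrel{d_i}{\row}C_{i-1}\row\ldots)$, which is finite since the weight structure of Proposition \ref{ws-iso-mot} is bounded. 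The $C_i$ are objects of the heart $Chow(k/k;\ff_p)$, and the $d_i$ are morphisms there satisfying $d_i\circ d_{i+1}=0$ (this identity holds in the heart because it holds up to the weight-structure bookkeeping, and morphisms between objects of the heart are detected honestly; cf. \cite[\S3]{Bon}).

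The key step is then purely category-theoretic: in a semisimple abelian category, any bounded complex $(C_\bullet,d_\bullet)$ is isomorphic (as a complex) to its cohomology $\bigoplus_i H^i(C_\bullet,d_\bullet)[?]$ with zero differentials. Indeed, writing $Z_i=\kker(d_i)$ and $B_i=\op{Im}(d_{i+1})$, semisimplicity gives splittings $C_i\cong B_i\oplus H_i\oplus B_{i-1}'$ where $H_i\cong Z_i/B_i$ and $B_{i-1}'\cong C_i/Z_i\cong B_{i-1}$; reassembling these splittings across all $i$ produces an isomorphism of complexes from $(C_\bullet,d_\bullet)$ to a complex with the same objects regrouped and \emph{zero} differentials, at the cost of modifying the $C_i$ within their isomorphism class in the heart. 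Translating back via Bondarko's formalism, this means: replacing the original weight filtration of $X$ by the one whose associated graded pieces are these regrouped objects, we obtain a weight complex for the \emph{same} $X$ with vanishing differentials. The only subtlety is to make sure that ``changing the weight complex up to homotopy/isomorphism'' genuinely corresponds to ``changing the weight filtration of $X$'', which is exactly the content of \cite[Proposition 3.3.2 and \S3.2]{Bon} (two weight complexes of the same object are homotopy equivalent, and conversely one may realise any representative in the homotopy class by an appropriate choice of filtration), so this step reduces to citing Bondarko.

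I expect the main obstacle to be bookkeeping rather than a genuine difficulty: one must be careful that the relevant abelian structure really is semisimple \emph{as an abelian category} and not merely additive with split idempotents — this is where Corollary \ref{is-Chow-num-Chow} is essential, since it identifies $Chow(k/k;\ff_p)$ with $Chow_{Num}(k;\ff_p)$, and numerical motives with $\ff_p$-coefficients form a semisimple abelian category (finite Hom groups, and the absence of $\otimes$-zero-divisors rules out nilpotents in the relevant $\op{End}$-rings). Granting that, the argument is short: boundedness of the weight structure from Proposition \ref{ws-iso-mot} plus the splitting principle in a semisimple category plus Bondarko's dictionary between weight filtrations and weight complexes. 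I would write it in two short paragraphs — one recalling semisimplicity and the decomposition of bounded complexes, one invoking \cite{Bon} to transport this back to a statement about weight filtrations of $X$ — and conclude that every $X\in DM(k/k;\ff_p)^c$ admits a weight filtration whose weight complex has zero differentials. $\square$
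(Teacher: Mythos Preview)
Your overall strategy---reduce to semisimplicity of the heart and use it to straighten the weight complex---is the same as the paper's, but there are two genuine gaps in the execution.

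First, your justification of semisimplicity is incomplete. Corollary \ref{is-Chow-num-Chow} only identifies $Chow(k/k;\ff_p)$ with $Chow_{Num}(k;\ff_p)$ and records the absence of $\otimes$-zero-divisors; it does not assert semisimplicity. Your parenthetical argument (``finite Hom groups, and the absence of $\otimes$-zero-divisors rules out nilpotents in the relevant $\End$-rings'') does not work: a nonzero nilpotent $\alpha\in\End(U)$ with $\alpha^2=0$ can perfectly well have $\alpha\otimes\alpha\neq 0$, so $\otimes$-regularity on morphisms says nothing about nilpotents under composition. Jannsen's theorem is for $\qq$-coefficients and does not transfer for free to $\ff_p$. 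The paper supplies the missing ingredient separately (Lemma \ref{Cho-num-sem-sim}): using the topological realisation and finite $\ff_p$-linear algebra one shows that every morphism $f:U\to V$ in $Chow_{Num}(k;\ff_p)$ decomposes as $id_W\oplus 0$ for suitable splittings of $U$ and $V$. This is exactly the form of ``semisimplicity'' needed, and it requires an argument.

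Second, and more seriously, your ``transport back'' step is not supported by the reference you cite. Bondarko proves that any two weight filtrations of the same $X$ yield homotopy-equivalent weight complexes; he does \emph{not} prove the converse, that every complex in the homotopy class of $t(X)$ is realised by some weight filtration of $X$. Your argument needs precisely this converse, and it is not in \cite{Bon}. The paper avoids this problem entirely by never leaving the triangulated category: starting from a weight truncation triangle $X_{>m}[-1]\to X_m\to X\to X_{>m}$, one uses Lemma \ref{Cho-num-sem-sim} to write the bottom differential as $(id_W\oplus 0)[m]$ and then literally splits off the trivial triangle $W[m]\stackrel{id}{\to}W[m]\to 0\to W[m+1]$ inside $DM(k/k;\ff_p)^c$, producing a \emph{new} weight filtration of the same $X$ whose bottom differential is zero; induction on the length of the support finishes (this is Proposition \ref{semi-sim-wc}). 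The manipulation happens at the level of actual distinguished triangles, so no lifting from $K^b$ back to weight filtrations is required.

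In short: keep the idea, but (a) prove the splitting lemma for morphisms in $Chow_{Num}(k;\ff_p)$ rather than asserting abstract semisimplicity, and (b) perform the inductive splitting directly on the weight-truncation triangles of $X$ rather than on the weight complex.
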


\begin{proof}
 This is a consequence of the semi-simplicity of the category $Chow_{Num}(k,\ff_p)$ - see Proposition \ref{semi-sim-wc}, which follows from the
 following Lemma.
 \Qed
 \end{proof}

 \begin{lemma}
  \label{Cho-num-sem-sim}
  For any morphism $f:U\row V$ in $Chow_{Num}(k,\ff_p)$, there are
  decompositions $U=W\oplus\wt{U}$ and $V=W\oplus\wt{V}$, such that $f=id_W\oplus 0$. In particular, the category
  $Chow_{Num}(k,\ff_p)$ is semi-simple.
 \end{lemma}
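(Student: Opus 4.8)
The plan is to prove that every morphism in $Chow_{Num}(k,\ff_p)$ splits off an identity on a common direct summand, which is precisely the statement that the category is semi-simple (equivalently, that its $\Hom$-rings are semi-simple artinian and every idempotent lifts). First I would observe that $Chow_{Num}(k,\ff_p)$ is an $\ff_p$-linear additive category which is idempotent-complete by construction (we took the Karoubian envelope), so it suffices to analyse the endomorphism rings. The key structural input is that for any object $U$, the $\ff_p$-algebra $\End(U)=\Ch^{\ddim}_{Num}(Y\times Y)$-type group is a \emph{finite-dimensional} $\ff_p$-algebra: indeed $\Ch^*_{Num}$ is a subquotient of singular cohomology with $\ff_p$-coefficients, hence finite, by the argument in the proof of Corollary \ref{fin-is-Chow-gr}. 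Moreover the numerical pairing induces a \emph{non-degenerate} bilinear form on $\Hom$-groups: the composition pairing $\Hom(U,V)\times\Hom(V,U)\row\Hom(U,U)\row\ff_p$ (the last map being the degree/trace) is non-degenerate by definition of numerical equivalence. This makes $Chow_{Num}(k,\ff_p)$ a category with a non-degenerate trace form, and such a category whose $\Hom$'s are finite-dimensional over a field has semi-simple endomorphism algebras — this is the classical argument (as for numerical motives over a field, going back to Jannsen): the radical of $\End(U)$ is contained in the radical of the trace form, which is zero.

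Once $\End(U)$ is known to be semi-simple artinian for every $U$, I would deduce the splitting statement for a morphism $f:U\row V$ as follows. Form $W_0=U\oplus V$; then $\End(W_0)$ is semi-simple artinian, and $f$ defines a nilpotent-free element there. More directly: consider $g:V\row U$ realising $f$ up to the non-degenerate pairing, or simply use that in a semi-simple $\ff_p$-linear category every morphism has a kernel and image that are direct summands (semi-simple abelian category); write $U=\ker(f)\oplus W$ and, since $f|_W:W\row V$ is a split monomorphism onto its image $W'\subset V$ which is a summand, $V=W'\oplus\wt V$ with $W'\cong W$. Taking $W$ (identified with $W'$ via $f$) gives $U=W\oplus\wt U$ (with $\wt U=\ker f$) and $V=W\oplus\wt V$ and $f=\id_W\oplus 0$ after these identifications. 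The phrase "in particular, the category is semi-simple" then records that every object is a finite direct sum of simple objects and every short exact (= distinguished in the additive sense) sequence splits; this follows formally once every $\End$ is semi-simple artinian and idempotents split, since a semi-simple ring decomposes $1$ into a sum of primitive idempotents, giving the decomposition of the identity object into simples.

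The main obstacle — and the only point requiring genuine care — is establishing semi-simplicity of the $\End$-algebras, i.e. that the numerical trace form has trivial radical and that this radical controls the Jacobson radical. The argument is: if $\alpha\in\End(U)$ lies in the radical $\rad\End(U)$, then for any $\beta$ the element $\alpha\beta$ is nilpotent, so $\op{tr}(\alpha\beta)=0$ (the trace of a nilpotent endomorphism vanishes because the degree of a composition $Y\times Y$-correspondence only sees the "diagonal" term and nilpotence kills it — this uses that $\op{tr}$ is a trace functional, $\op{tr}(xy)=\op{tr}(yx)$, combined with $\op{tr}(\text{nilpotent})=0$); hence $\alpha$ pairs to zero with everything, so $\alpha=0$ by non-degeneracy of the numerical pairing. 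Therefore $\rad\End(U)=0$, and a finite-dimensional algebra over a field with zero Jacobson radical is semi-simple artinian. I would assume, as stated earlier in the paper (and standard for numerical motives), that the degree pairing on $\Ch^*_{Num}$ is by construction non-degenerate and compatible with composition of correspondences; the only mild subtlety is that we are over $\ff_p$ rather than $\qq$, but Jannsen's proof of semi-simplicity of numerical motives works over any field of coefficients once one has a Weil-cohomology-type non-degenerate pairing, which here is supplied by the topological realisation used in Corollary \ref{fin-is-Chow-gr}. This completes the plan; the remaining verifications are routine linear algebra internal to a finite-dimensional semi-simple algebra.
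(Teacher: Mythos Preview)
Your overall strategy---reduce to Jannsen's semi-simplicity argument for numerical motives, using the topological realisation as the Weil-type cohomology---is sound and is a genuinely different route from the paper. The paper proceeds constructively: given $f\neq 0$, it picks $g$ with $\op{tr}(g\circ f)\neq 0$, passes to the topological realisation, and uses finiteness of $\End_{\ff_p}(H_{Top}(U))$ to show that a suitable power of $\alpha=g\circ f$ is a non-zero idempotent, splitting off a common summand; then it iterates. Your approach is cleaner and more conceptual, and connects the statement to the classical literature; the paper's is more self-contained and avoids invoking Jannsen as a black box.

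There is, however, a real gap in your execution of the key step. You argue: if $\alpha\in\op{rad}\End_{Num}(U)$ then $\alpha\beta$ is nilpotent in $\End_{Num}(U)$, hence $\op{tr}(\alpha\beta)=0$. The implication ``nilpotent in $\End_{Num}$ $\Rightarrow$ trace zero'' is \emph{not} justified as stated. Nilpotence is only in the numerical quotient, and the topological realisation does \emph{not} factor through numerical equivalence, so you cannot conclude that $\alpha\beta$ acts nilpotently on $H^*(X;\ff_p)$; and in characteristic~$p$ the Newton-identity trick (``all power-traces vanish $\Rightarrow$ nilpotent'') fails. The correct version of Jannsen's argument runs in the \emph{homological} endomorphism ring $B=\End_{hom}(U)\subset\End_{\ff_p}(H^*(X;\ff_p))$: there $\op{rad}(B)$ is a nilpotent two-sided ideal, so every element of it is a nilpotent operator on $H^*$, hence (by Lefschetz) lies in the kernel $N$ of the trace form; thus $\End_{Num}(U)=B/N$ is a quotient of the semi-simple algebra $B/\op{rad}(B)$, and is therefore semi-simple. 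Once you reroute your radical argument through $B$ rather than $\End_{Num}(U)$, your proof goes through.
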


 \begin{proof}
 If $f$ is non-zero,
 then the adjoint map $T\stackrel{\ov{f}}{\lrow}U^{\vee}\otimes V$ is non-zero too, which
 means that there exists $g:V\row U$, such that the composition
 $T\stackrel{\ov{f}}\lrow U^{\vee}\otimes V\stackrel{\un{g}}{\lrow} T$
 is the identity. That means that the traces of $\alpha=g\circ f: U\row U$ and $\beta=f\circ g: V\row V$ are non-zero. 
 Let us show that some powers of $\alpha$ and $\beta$ are non-zero projectors. We may assume that $U$ and $V$ are motives of varieties.
 
 Consider the topological realisation functor $TR: Chow(k;\ff_p)\row D^b(\ff_p)$ and denote $H_{Top}(Z):=H^*(TR(Z),\ff_p)$.
 The latter is a finite-dimensional $\ff_p$-vector space.
 We may identify $H_{Top}(U\times U)$ with $\End_{\ff_p}(H_{Top}(U))$. Let $\wt{\alpha}$ and $\wt{\beta}$ be $TR$ of some liftings of $\alpha$ and $\beta$ to non-numerical Chow groups.
 Since the degree pairing is defined on the level of the topological realisation, $\wt{\alpha}$ and $\wt{\beta}$, 
 considered as $\ff_p$-linear transformations, have non-zero traces. Define the filtration:
 $H_{Top}(U)=F_0\supset F_1\supset F_2\supset\ldots$, where
 $F_k=image(\wt{\alpha}^k)$. Since our space is finite-dimensional, there exists $r$, such that the embeddings $F_k\supset F_{k+1}$ are proper, for $k<r$, and are isomorphisms,
 for $k\geq r$. Since the ring $\End_{\ff_p}(H_{Top}(U))$ is finite, there exists $N$, such that $\wt{\alpha}^N$ is a projector. Considered as a linear transformation, it should be
 a projection on $F_r$. Then $tr(\wt{\alpha}^{N+1})$ coincides
 with $tr(\wt{\alpha}|_{F_r})$ which is equal to $tr(\wt{\alpha})$. Hence, $\ddeg(\wt{\alpha}\cdot(\wt{\alpha}^N)^{\vee})=tr(\wt{\alpha}^{N+1})\neq 0$, and so, $\alpha^N$ is numerically non-trivial. Since numerical Chow groups are sub-quotients of topological cohomology (as the degree pairing factors through $TR$), it follows that $a=\alpha^N$ is a non-zero
 projector. Then so is $b=\beta^{2N}$, and maps $f$ and $g'=g\circ\beta^{2N-1}$ identify $(U,a)$ with $(V,b)$.
 Thus, there are decompositions $U=W'\oplus U'$ and $V=W'\oplus V'$
 with non-zero $W'$, s.t. $f=id_{W'}\oplus f'$,
 for some $f':U'\row V'$. Repeating these arguments with $f', f'', ...$ and taking into account that the endomorphism rings of our objects are finite, we get the decomposition as required.
 \Qed
 \end{proof}
 
 \begin{proposition}
  \label{semi-sim-wc}
  Let ${\cal D}$ be a triangulated category with the non-degenerate bounded weight structure, whose heart ${\cal C}$ is semi-simple. Then every object possesses a weight filtration, for which the weight complex has zero differentials.
 \end{proposition}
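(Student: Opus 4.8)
The plan is to establish the stronger assertion that every object $X$ of ${\cal D}$ is (non-canonically) isomorphic to a finite direct sum $\bigoplus_i C_i[i]$ of shifts of objects $C_i$ of the heart ${\cal C}$. This clearly suffices: such a splitting is itself a weight filtration (set $X_{\leq n}=\bigoplus_{i\leq n}C_i[i]$), its graded pieces are the $C_n[n]$, and the connecting morphisms $C_n[n]\row X_{\leq n-1}[1]$ all vanish because the defining triangles split; hence the associated weight complex has zero differentials.

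First I would set up the induction. By boundedness of the weight structure, every $X$ lies in ${\cal D}^{w\geq a}\cap{\cal D}^{w\leq b}$ for some integers $a\leq b$, and I induct on $b-a$. The case $b<a$ is $X=0$, and the case $b=a$ means $X\in{\cal D}^{w=a}={\cal C}[a]$ by the definition of the heart, so there is nothing to prove. For the inductive step, choose a weight decomposition $w_{\leq a}X\row X\row w_{\geq a+1}X\stackrel{\delta}{\row}(w_{\leq a}X)[1]$. The usual argument with the orthogonality axiom (both $X$ and $(w_{\geq a+1}X)[-1]$ lie in ${\cal D}^{w\geq a}$, and both $X$ and $(w_{\leq a}X)[1]$ lie in ${\cal D}^{w\leq b}$) shows that $w_{\leq a}X$ is pure of weight $a$, hence $w_{\leq a}X\cong C_a[a]$ with $C_a\in{\cal C}$, while $w_{\geq a+1}X$ has weight range $[a+1,b]$. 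By the inductive hypothesis $w_{\geq a+1}X\cong\bigoplus_{i=a+1}^{b}C_i[i]$ with $C_i\in{\cal C}$.

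The key input now is the semi-simplicity of ${\cal C}$. I would decompose the connecting morphism $\delta$ according to the summands of its source: for $i\geq a+2$ its component $C_i[i]\row C_a[a+1]$ is a morphism between pure objects of weights $i$ and $a+1$ with $i>a+1$, hence vanishes by weight-orthogonality, so $\delta$ is determined by its single remaining component $\varphi\colon C_{a+1}[a+1]\row C_a[a+1]$, i.e. by a morphism $\varphi\in\Hom_{\cal D}(C_{a+1}[a+1],C_a[a+1])=\Hom_{\cal C}(C_{a+1},C_a)$ of the heart. By semi-simplicity --- applied exactly as in Lemma \ref{Cho-num-sem-sim} --- there are decompositions $C_{a+1}=W\oplus C_{a+1}'$ and $C_a=W\oplus C_a'$ for which $\varphi=\op{id}_W\oplus 0$. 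Rotating the triangle, $X$ is the cone of the map $(w_{\geq a+1}X)[-1]\row C_a[a]$ induced by $\delta$, and with respect to these decompositions this map is $\op{id}_{W[a]}$ on one direct summand and zero on its complement. Since $\cone$ commutes with finite direct sums and $\cone(\op{id}_{W[a]})=0$, this yields $X\cong C_a'[a]\oplus C_{a+1}'[a+1]\oplus\bigoplus_{i=a+2}^{b}C_i[i]$, again a finite direct sum of shifts of objects of ${\cal C}$. This completes the induction and hence the proof.

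I expect the only genuine subtlety to be the weight/shift bookkeeping: one must fix the convention for which shift raises the weight, check carefully that the orthogonality axiom kills every component of $\delta$ except the component $C_{a+1}[a+1]\row C_a[a+1]$ relating the two weight-$(a+1)$ pieces, and confirm that this surviving component is an honest morphism of ${\cal C}$ so that semi-simplicity applies. The remaining ingredients --- the existence and purity properties of weight decompositions and the compatibility of mapping cones with direct sums --- are formal.
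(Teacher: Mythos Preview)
Your argument has a genuine gap at the orthogonality step. You claim that for $i\geq a+2$ the component $C_i[i]\to C_a[a+1]$ of $\delta$ vanishes ``by weight-orthogonality''. But the orthogonality axiom for a weight structure (in the paper's convention, checked in the proof of Proposition~\ref{ws-iso-mot}) says $\Hom_{\cal D}(C,C'[j])=0$ for $j>0$, i.e.\ there are no maps from \emph{low} weight to \emph{high} weight. What you need is the opposite: $\Hom_{\cal D}(C_i,C_a[a+1-i])=0$ with $a+1-i<0$, a \emph{negative} shift on the target. This direction is not part of the axioms, and in fact it can fail. A concrete instance: in the triangulated category of mixed Tate motives $DM_T(k;\qq)$ with the Chow weight structure, the heart (pure Tate motives $\bigoplus\qq(n)[2n]$) is semi-simple, yet $\Hom(\qq,\qq(1)[2][-1])=\Hom(\qq,\qq(1)[1])=k^*\otimes\qq$, which is non-zero for most $k$.

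Worse, your \emph{stronger} assertion---that every object splits as a finite direct sum of shifts of heart objects---is false in this generality. Take a non-zero $f\in\Hom(\qq,\qq(1)[1])$ as above and set $X=\op{fib}\bigl(\qq[2]\stackrel{f[2]}{\to}\qq(1)[3]\bigr)$, so that $\qq(1)[2]\to X\to\qq[2]\stackrel{f[2]}{\to}\qq(1)[3]$ is a weight decomposition of $X$ with weight complex $(\qq(1),0,\qq)$ and zero differentials. If $X$ were a direct sum of shifted heart objects, its weight cohomology would force $X\cong\qq(1)[2]\oplus\qq[2]$; but then the connecting map in \emph{any} weight-$0$ decomposition would have to be zero (the cross-terms vanish by the correct orthogonality direction), contradicting $f\neq 0$. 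So the proposition holds for this $X$, while your strengthening does not.

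The paper avoids this trap by never trying to split $X$. It works instead with the bottom \emph{weight-complex differential} $d_{m+1}\colon C_{m+1}\to C_m$, which by construction is an honest morphism in the heart ${\cal C}$; semi-simplicity diagonalises $d_{m+1}=\op{id}_W\oplus 0$, and one then modifies the weight filtration by splitting off the trivial triangle $W[m]\stackrel{\op{id}}{\to}W[m]\to 0$ to make that one differential vanish, leaving a new $X'_{>m}$ of strictly shorter support for the induction. The point is that one manipulates the \emph{filtration}, not the object.
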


 \begin{proof}
 Let $X$ be an object of ${\cal D}$. Since our weight structure is bounded, $X$ is supported on some segment $[m,n]$, that is, 
 $X\in{\cal D}_{\geq m}\cap{\cal D}_{\leq n}$. In particular, we have an
 exact triangle
 $$
 X_{>m}[-1]\row X_m\lrow X\lrow X_{>m}.
 $$
 Here the differential $X_{m+1}[-1]\stackrel{d_{m+1}[m]}{\lrow}X_m$
 of the (shifted) weight complex $t(X)$ factors as the composition 
 $X_{m+1}[-1]\row X_{>m}[-1]\row X_m$. Since ${\cal C}$ is semi-simple, we may present it as 
 $(id_W\oplus 0)[m]$, for an appropriate $W\in {\cal C}$.
 Now we can split the triangle $W[m]\stackrel{id}{\row}W[m]\row 0\row W[m+1]$ from the above one, and get another presentation
 $$
 X'_{>m}[-1]\row X'_m\lrow X\lrow X'_{>m}.
 $$
 with the zero differential $C'_{m+1}\stackrel{0}{\row}C'_m$ and
 $X'_{>m}$ supported on $[m+1,n]$. We conclude by induction on the support.
 \Qed
 \end{proof}

The weight cohomology is functorial.

\begin{corollary}
 \label{wt-coh}
 We have a conservative weight cohomology functor 
 $$
 H^{Tot}_w=\oplus_it^i\cdot H^i_w:DM(k/k;\ff_p)^c\row Chow(k/k;\ff_p)[t,t^{-1}].
 $$ 
\end{corollary}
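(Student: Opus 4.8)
The statement packages the preceding results: the plan is to manufacture the functor out of Bondarko's weight complex functor, using the rigidity supplied by Proposition \ref{wt-zero-dif} to upgrade it from an ``up-to-homotopy'' gadget to an honest functor. First I would recall that, for the bounded non-degenerate weight structure on $DM(k/k;\ff_p)^c$ established in Proposition \ref{ws-iso-mot}, Bondarko's machinery produces the weight complex functor $t\colon DM(k/k;\ff_p)^c\lrow K^b(Chow(k/k;\ff_p))$: on an object $X$ one picks a weight filtration and forms the complex $\ldots\row C_{i+1}\stackrel{d_{i+1}}{\lrow} C_i\row\ldots$ with the $C_i$ in the heart; the resulting object of $K^b(Chow(k/k;\ff_p))$ is independent of the choices up to homotopy equivalence, and a morphism of $DM(k/k;\ff_p)^c$ induces a chain map that is well defined up to chain homotopy (and functorial up to homotopy).

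Second, the crucial input is Proposition \ref{wt-zero-dif}: every $X$ admits a weight filtration whose weight complex has zero differentials. For such a representative I would set $H^i_w(X)$ to be its $i$-th term, an object of $Chow(k/k;\ff_p)$. This is well defined up to isomorphism: if $C_\bullet$ and $C'_\bullet$ are two zero-differential weight complexes of $X$, they are homotopy equivalent in $K^b(Chow(k/k;\ff_p))$; but a chain homotopy between maps of complexes with vanishing differentials contributes nothing (it is annihilated on both sides by the zero differentials), so homotopic chain maps between such complexes coincide, and a homotopy equivalence $C_\bullet\row C'_\bullet$ is therefore already a degreewise isomorphism $C_i\stackrel{\sim}{\row} C'_i$. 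The same observation makes $H^i_w$ functorial: a morphism $f\colon X\row Y$ induces a chain map between chosen zero-differential representatives which, being unique up to homotopy and hence literally unique, yields a well-defined $H^i_w(f)\colon H^i_w(X)\row H^i_w(Y)$; compatibility with composition and with identities is checked the same way, since the relations hold up to homotopy and homotopy is trivial here. Assembling the $H^i_w$ into $H^{Tot}_w=\oplus_i t^i\cdot H^i_w$ gives an additive functor to the category $Chow(k/k;\ff_p)[t,t^{-1}]$ of finitely supported $\zz$-graded objects of the heart.

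For conservativity, suppose $H^{Tot}_w(X)=0$. Then $X$ admits a weight filtration whose (zero-differential) weight complex is the zero complex, i.e. all its graded pieces $H^i_w(X)[i]$ vanish; since $X$ is reconstructed from these pieces by iterated extensions (cones), $X=0$. The converse implication is immediate, so $H^{Tot}_w$ is conservative.

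The only genuinely delicate point is the passage from the weight complex functor, which a priori is well defined only up to homotopy equivalence on objects and up to homotopy on morphisms, to an honest, honestly functorial assignment; this is exactly what the zero-differential normal form of Proposition \ref{wt-zero-dif} buys us, and that proposition rests in turn on the semi-simplicity of $Chow_{Num}(k,\ff_p)$ (Lemma \ref{Cho-num-sem-sim} and Proposition \ref{semi-sim-wc}), hence ultimately on the Main Theorem \ref{Main}. Everything else is a formal consequence of Bondarko's theory of weight structures.
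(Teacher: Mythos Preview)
Your proof is correct and follows essentially the same line as the paper's, only with considerably more detail. The paper's own argument is terse: it says that a map between objects with zero-differential weight filtrations extends uniquely to the graded pieces, and that conservativity is clear because every object is an extension of its graded pieces; the surrounding text then identifies $H^{Tot}_w$ with the cohomology of Bondarko's weight complex functor $t$, noting that Proposition~\ref{wt-zero-dif} forces $K^b(Chow(k/k;\ff_p))\to K^b_{\frak w}(Chow(k/k;\ff_p))$ to be an equivalence. Your explicit observation that homotopies between zero-differential complexes vanish (so homotopy equivalences are genuine isomorphisms and induced chain maps are literally unique) is exactly the mechanism behind the paper's ``straightforward to check''.
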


\begin{proof}
 It is straightforward to check that a map $f:X\row Y$ between two
 objects with weight filtrations with zero differentials extends
 uniquely to maps of graded pieces. The conservativity is clear, as any object is an
 extension of graded pieces of it.
 \Qed
\end{proof}

It is just the cohomology of the {\it weight complex functor}
$$
t:DM(k/k;\ff_p)^c\row K^b(Chow(k/k;\ff_p))
$$ 
- \cite{Bon}, which exists (and is triangulated) since, due to Proposition \ref{wt-zero-dif}, the natural projection 
$$
K^b(Chow(k/k;\ff_p))\row K^b_{{\frak w}}(Chow(k/k;\ff_p))
$$ 
is
an equivalence of categories - see \cite[Definition 3.1.6]{Bon}
and \cite[Theorem 3.2.2(II)]{Bon}.   

Moreover, the total weight cohomology functor respects the tensor product.

\begin{theorem} {\rm (Kunneth formula)}
 \label{Kunneth}
 The functor $H^{Tot}_w$ respects the $\otimes$. In other words,
 $$
 H^k_w(X\otimes Y)\cong
 \operatornamewithlimits{\oplus}_{i+j=k}H^i_w(X)\otimes H^j_w(Y).
 $$
\end{theorem}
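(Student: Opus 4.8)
The plan is to combine the semi-simplicity of the heart with the fact that the weight structure of Proposition \ref{ws-iso-mot} is compatible with the tensor product. First I would record this compatibility: the unit $T$ lies in the heart $Chow(k/k;\ff_p)$, this heart is closed under finite direct sums and under $\otimes$ (it consists of direct summands of isotropic motives of smooth projective varieties, and the isotropic motive of $Z_1\times Z_2$ is the $\otimes$ of those of $Z_1$ and $Z_2$), and $\otimes$ is exact in each variable; hence $\mathcal D_{\leq 0}\otimes\mathcal D_{\leq 0}\subseteq\mathcal D_{\leq 0}$ and $\mathcal D_{\geq 0}\otimes\mathcal D_{\geq 0}\subseteq\mathcal D_{\geq 0}$, i.e. the weight structure is monoidal. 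By Corollary \ref{is-Chow-num-Chow} the heart is moreover semi-simple, so by Proposition \ref{wt-zero-dif} every compact isotropic motive admits a weight filtration whose weight complex has zero differentials.

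Now fix $X$ and $Y$ and choose, by Proposition \ref{wt-zero-dif}, weight filtrations of them with weight complexes $(C_\bullet,0)$ and $(D_\bullet,0)$, so $C_i=H^i_w(X)$ and $D_j=H^j_w(Y)$. I would then form the tensor product of these two weight filtrations on $X\otimes Y$ (the usual Postnikov-tower construction for a pair of filtered objects, which produces a weight filtration because $\otimes$ is triangulated in each variable and the heart is $\otimes$-stable). Its $n$-th graded piece is $\bigoplus_{i+j=n}\mathrm{gr}^w_i(X)\otimes\mathrm{gr}^w_j(Y)=\bigoplus_{i+j=n}C_i[i]\otimes D_j[j]=\bigl(\bigoplus_{i+j=n}C_i\otimes D_j\bigr)[n]$, a shift of a heart object, as it must be; and the differential of the resulting weight complex is the signed tensor-product differential $d^X\otimes\mathrm{id}\pm\mathrm{id}\otimes d^Y$, which is zero since $d^X=d^Y=0$.

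It remains to identify the terms. On the one hand, Proposition \ref{wt-zero-dif} applied to $X\otimes Y$ produces a zero-differential weight complex with terms $H^n_w(X\otimes Y)$; on the other, the tensor filtration just built produces a zero-differential weight complex with terms $\bigoplus_{i+j=n}C_i\otimes D_j$. Both represent the weight complex of $X\otimes Y$, hence are homotopy equivalent in $K^b(Chow(k/k;\ff_p))$; but a homotopy equivalence between complexes with vanishing differentials is an isomorphism of complexes (if $d=0$ then $gf-\mathrm{id}=d h+h d=0$, and likewise $fg=\mathrm{id}$), so the two are isomorphic term by term. This gives $H^n_w(X\otimes Y)\cong\bigoplus_{i+j=n}H^i_w(X)\otimes H^j_w(Y)$, which is the assertion.

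The step I expect to be the main obstacle is the middle one: verifying carefully that the tensor product of the two weight filtrations is genuinely a weight filtration and that its weight complex is the signed tensor product of the individual weight complexes. This is precisely the assertion that the weak weight complex functor is monoidal for a monoidal weight structure; I would either invoke the theory of monoidal weight structures building on \cite{Bon}, or prove it directly by a double induction on the lengths of the two weight filtrations, at each stage splicing the relevant triangles via the octahedral axiom and using $\otimes$-exactness together with the $\otimes$-stability of the heart. The remaining ingredients — vanishing of the tensor differential and rigidity of zero-differential complexes — are purely formal.
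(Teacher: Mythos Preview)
Your approach is correct and essentially equivalent in content, but it is organized differently from the paper's proof. The paper does not construct the tensor filtration directly in the isotropic category; instead it cites the fact that the \emph{global} weight complex functor $DM(k;\ff_p)^c\to K^b(Chow(k;\ff_p))$ is a tensor functor (this is \cite[Lemma 20]{BQ}), and then observes that the isotropic category is obtained from the global one by applying the tensor projector $\wt{\cal X}_Q\otimes(-)$, which commutes with both the weight filtration and with $\otimes$. Taking cohomology of the resulting monoidal weight complex functor, together with the zero-differential statement of Proposition \ref{wt-zero-dif}, then gives the Kunneth formula.

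Your route works entirely inside $DM(k/k;\ff_p)^c$: you verify directly that the weight structure is monoidal, build the tensor of two zero-differential weight filtrations, and compare. The step you flag as the main obstacle --- that the tensor of two weight filtrations is a weight filtration with weight complex the signed tensor product --- is exactly what Bachmann's lemma records in the global setting, so in effect you are reproving that lemma locally rather than transporting it via the projector. Your version is more self-contained (no appeal to \cite{BQ}), while the paper's is shorter because the monoidality of the weight complex functor is outsourced. Your final rigidity argument (homotopy equivalence between zero-differential complexes forces termwise isomorphism) is a clean way to finish and is implicit in the paper's use of Proposition \ref{wt-zero-dif}.
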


\begin{proof}
 This follows from the fact that the ``global'' {\it weight complex functor}  is a tensor functor - see \cite[Lemma 20]{BQ}, while the isotropic category is obtained from the global one by
 a tensor projector which commutes with the weight filtration
 and $\otimes$.
 \phantom{a}\hspace{5mm}\Qed
\end{proof}

Applying Corollary \ref{is-Chow-num-Chow}, we get:

\begin{theorem}
 \label{a-p-E-prime}
 For any prime $p$ and any field extension $E/k$, the
 $\otimes-\triangle$-ideal 
 $$
 {\frak a}_{p,E}:=\kker(\psi_{p,E}:DM(k)^c\row DM(\wt{E}/\wt{E};\ff_p)^c)$$
 is prime and so, defines a point of the Balmer spectrum $\op{Spc}(DM(k)^c)$ of the Voevodsky category. 
 
 Two such points ${\frak a}_{p,E}$ and ${\frak a}_{q,F}$ are equal if
 and only if $p=q$ and $E/k\stackrel{p}{\sim}F/k$.
\end{theorem}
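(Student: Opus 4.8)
The plan is to obtain primeness of ${\frak a}_{p,E}$ from the absence of $\otimes$-zero-divisors in the target isotropic category — which the Main Theorem has just made available through Corollary \ref{is-Chow-num-Chow} — and then to separate the points by hand: the mod-$\ell$ Tate motives will detect $p$, and the motives of smooth projective varieties lying in ${\frak a}_{p,E}$ will detect the $\stackrel{p}{\sim}$-class of $E$. First I would note that $\psi_{p,E}$ does restrict to a tensor-triangulated functor $DM(k)^c\row DM(\wt E/\wt E;\ff_p)^c$: base change along $k\row\wt E$ preserves geometric motives, reduction mod $p$ preserves compactness, and the Verdier localisation sends $DM(\wt E;\ff_p)^c$ into $DM(\wt E/\wt E;\ff_p)^c$ by the description of the latter recalled in the proof of Proposition \ref{ws-iso-mot}. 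Since the pre-image of a proper prime $\otimes-\triangle$-ideal under a tensor-triangulated functor is again a proper prime $\otimes-\triangle$-ideal \cite{Bal}, and $\psi_{p,E}(T)=T\neq 0$, it is enough to show that the zero ideal is prime in $DM(\wt E/\wt E;\ff_p)^c$, i.e. that this category is non-zero and has no $\otimes$-zero-divisors on objects. Non-triviality holds because isotropic Chow groups of a point are non-zero (no non-zero anisotropic class lives in $\Ch^0$). For the zero-divisor property, suppose $X\otimes Y=0$. Applying the conservative total weight cohomology functor $H^{Tot}_w$ (Corollary \ref{wt-coh}) together with the K\"unneth formula (Theorem \ref{Kunneth}) gives $0=H^{Tot}_w(X\otimes Y)\cong H^{Tot}_w(X)\otimes H^{Tot}_w(Y)$, hence $H^i_w(X)\otimes H^j_w(Y)=0$ in $Chow(\wt E/\wt E;\ff_p)$ for all $i,j$ (a finite direct sum of objects of an additive category vanishes iff every summand does). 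Since this heart has no $\otimes$-zero-divisors on objects (Corollary \ref{is-Chow-num-Chow}, Theorem \ref{Iso-no-zd}), either every $H^i_w(X)$ or every $H^j_w(Y)$ vanishes, and conservativity of $H^{Tot}_w$ then forces $X=0$ or $Y=0$.

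For the second assertion, I would first recover $p$. For a prime number $\ell$, the object $T/\ell=\cone(\ell\cdot\op{id}_T)$ of $DM(k)^c$ is sent by $\psi_{p,E}$ to $\cone(\ell\cdot\op{id}_T)$ computed in the $\ff_p$-linear category $DM(\wt E/\wt E;\ff_p)$, which is $0$ when $\ell\neq p$ and is $T\oplus T[1]\neq 0$ when $\ell=p$; thus $T/\ell\in{\frak a}_{p,E}$ precisely for $\ell\neq p$, so ${\frak a}_{p,E}={\frak a}_{q,F}$ forces $p=q$. With $p=q$ fixed, I would next recover the $\stackrel{p}{\sim}$-class. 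For smooth projective $Z/k$ one has $M(Z)\in{\frak a}_{p,E}$ iff the isotropic motive of $Z_{\wt E}$ vanishes iff $Z_{\wt E}$ is $p$-anisotropic (if $Z_{\wt E}$ carried a prime-to-$p$ zero cycle, $T$ would be a retract of $M(Z_{\wt E})$, contradicting non-triviality of the isotropic category); and $p$-anisotropy is unaffected by the purely transcendental part $\wt E/E$, so this happens iff $Z$ acquires no prime-to-$p$ zero cycle over any finitely generated subextension of $E/k$. Unwinding the definition of $\stackrel{p}{\geq}$ and composing correspondences, $E/k\stackrel{p}{\geq}F/k$ becomes equivalent to the implication ``$Z_{\wt F}$ $p$-anisotropic $\Rightarrow$ $Z_{\wt E}$ $p$-anisotropic'' for all such $Z$, i.e. to $\{Z\mid M(Z)\in{\frak a}_{p,F}\}\subseteq\{Z\mid M(Z)\in{\frak a}_{p,E}\}$. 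Hence ${\frak a}_{p,E}={\frak a}_{p,F}$ forces $E/k\stackrel{p}{\sim}F/k$; conversely, by the construction of the isotropic realisations in \cite{Iso} the localising subcategory of $p$-anisotropic motives, and so the functor $\psi_{p,E}$ and its kernel, depend only on the $\stackrel{p}{\sim}$-class of $E$, which yields the reverse implication.

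The bulk of the work is carried by the already-established inputs — Corollary \ref{is-Chow-num-Chow} and Theorem \ref{Iso-no-zd} (semisimplicity and absence of $\otimes$-zero-divisors in the heart, resting ultimately on the Main Theorem) and the weight-structure package of Propositions \ref{ws-iso-mot} and \ref{wt-zero-dif}, Corollary \ref{wt-coh} and Theorem \ref{Kunneth} — so that primeness comes out formally. The step I expect to require the most care is the ``$\Leftarrow$'' half of the second assertion, namely that a $\stackrel{p}{\sim}$-equivalence leaves ${\frak a}_{p,E}$ unchanged; for this I would rely on the analysis of \cite{Iso}, to the effect that ${\frak a}_{p,E}$ is recovered from the collection of motives of smooth projective varieties it contains, together with the characterisation of $\psi_{p,E}(M(Z))=0$ by $p$-anisotropy of $Z_{\wt E}$ recalled above.
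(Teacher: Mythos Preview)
Your primeness argument is correct and is exactly the paper's: conservativity of $H^{Tot}_w$, the K\"unneth formula, and the absence of $\otimes$-zero-divisors in the heart (Corollary~\ref{is-Chow-num-Chow}) combine to show that $(0)$ is prime in $DM(\wt E/\wt E;\ff_p)^c$. Your recovery of $p$ via $T/\ell$ and your ``only if'' direction (distinguishing non-$p$-equivalent extensions through the motives $M(Q_\alpha)$) are also right and match the paper.

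The gap is in the implication $E\stackrel{p}{\sim}F\Rightarrow{\frak a}_{p,E}={\frak a}_{p,F}$, which you correctly flag as the delicate step but do not actually resolve. Your appeal --- that \cite{Iso} shows ``the functor $\psi_{p,E}$ and its kernel depend only on the $\stackrel{p}{\sim}$-class of $E$'', or equivalently that ``${\frak a}_{p,E}$ is recovered from the collection of motives of smooth projective varieties it contains'' --- does not go through. First, the target $DM(\wt E/\wt E;\ff_p)$ and the localising subcategory one quotients by are attached to $\wt E$ itself, so the functor cannot literally depend only on the $\sim$-class; only the kernel in $DM(k)^c$ can be compared, and that is precisely the statement to be proved. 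Second, ${\frak a}_{p,E}$ is \emph{not} the thick $\otimes$-ideal generated by the $M(Z)$ it contains for $Z/k$ smooth projective: the vanishing of $\psi_{p,E}(U)$ says $U_{\wt E}$ lies in the ideal generated by motives of $p$-anisotropic varieties \emph{over $\wt E$}, which need not descend to $k$. So knowing which $M(Z)$ with $Z/k$ lie in ${\frak a}_{p,E}$ does not obviously determine the whole ideal; your proposed reduction is circular.

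The paper's argument supplies the missing idea. Given $U\in{\frak a}_{p,E}$, compactness of $U$ lets one descend the finitely many anisotropic witnesses $R_i/\wt E$ (and the maps exhibiting $U_{\wt E}$ as a summand of an extension of the $M(R_i)$) to some finite level $\wt E_\alpha=\wt k(Q_\alpha)$. The relation $E\stackrel{p}{\geq}F$ forces the $R_i$ to remain $p$-anisotropic over $\wt F(Q_\alpha)$, so $U$ dies in $DM(\wt F(Q_\alpha)/\wt F(Q_\alpha);\ff_p)$. One then passes to the \emph{local motivic category} $DM(\wt F(Q_\alpha)/\wt F;\ff_p)$ via \cite[Prop.~2.9]{Iso}, and uses that $Q_\alpha|_{\wt F}$ is $p$-isotropic (this is where $F\stackrel{p}{\geq}E$ enters) to identify this with $DM(\wt F/\wt F;\ff_p)$, giving $U\in{\frak a}_{p,F}$. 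This descent-and-transfer through the local categories of \cite{Iso} is the substantive content you are missing.
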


\begin{proof}
 This follows from the fact that the total weight cohomology is conservative and satisfies the Kunneth formula, while the $\otimes$
 has no zero-divisors on $Chow(k/k;\ff_p)$ by
 Corollary \ref{is-Chow-num-Chow}.
 
 Since $char({\frak a}_{p,E})=p$, it is an invariant of a point. 
 
 Suppose, $E/k\stackrel{p}{\sim}F/k$. Let $U\in {\frak a}_{p,E}$.
 Let $E=\operatornamewithlimits{colim}_{\alpha}E_{\alpha}$, where
 $E_{\alpha}=k(Q_{\alpha})$ is a finitely generated extension with a smooth model $Q_{\alpha}$. Since $U\in{\frak a}_{p,E}$
 and $U$ is compact,
 there are finitely many $p$-anisotropic varieties $R_i$,
 $i=1,\ldots,n$ over $\wt{E}$, such that $U_{\wt{E}}$ is a direct summand of an extension of their motives. These varieties and all the 
 morphisms involved should be defined over some finitely generated extension $\wt{E}_{\alpha}=\wt{k}(Q_{\alpha})$ of $\wt{k}$. Moreover, since $E/k\stackrel{p}{\sim}F/k$, these varieties 
 remain $p$-anisotropic over $\wt{F}(Q_{\alpha})$. Hence,
 $U_{\wt{F}(Q_{\alpha})}=0\in 
 DM(\wt{F}(Q_{\alpha})/\wt{F}(Q_{\alpha});\ff_p)$. 
 By \cite[Prop. 2.9]{Iso}, then $U_{\wt{F}}=0\in DM(\wt{F}(Q_{\alpha})/\wt{F};\ff_p)$, where the latter is the {\it local motivic category} - see \cite[Def. 2.3]{Iso}. But since $Q_{\alpha}|_{\wt{F}}$ is
 $p$-isotropic, the extensions $\wt{F}(Q_{\alpha})/\wt{F}$ and
 $\wt{F}/\wt{F}$ are $p$-equivalent and so, the respective local categories are identified - see loc. cit.. Thus,
 $U_{\wt{F}}=0\in DM(\wt{F}/\wt{F};\ff_p)$. 
 Hence $U\in {\frak{a}}_{p,F}$. From symmetry, we get that
 ${\frak{a}}_{p,E}={\frak{a}}_{p,F}$, if $E/k\stackrel{p}{\sim}F/k$.
 Conversely, if $E/k\stackrel{p}{\not\sim}F/k$, then either $Q_{\alpha}|_{F}$, for some $\alpha$,
 or $P_{\beta}|_{E}$, for some $\beta$, is $p$-anisotropic. W.l.o.g. may assume the former. Then
 $M(Q_{\alpha})\in {\frak a}_{p,F}$ (as $Q_{\alpha}$ stays anisotropic over $\wt{F}$),
 while $M(Q_{\alpha})\not\in {\frak a}_{p,E}$, as $\psi_{p,E}$ passes through
 $DM(E)$ and a Tate-motive splits from $M(Q_{\alpha})$ there. Hence, the
 points are different.
 \phantom{a}\Qed
\end{proof}

Thus we obtain a large supply of new points of the Balmer spectrum of $DM(k)^c$, which complement the points given by the topological realisation. 

\begin{example}
 \label{exa-C-R}
 If $k$ is algebraically closed. then every variety of finite type over $k$ has $k$-rational points. Hence, for every $p$, there is only one
 $p$-equivalence class of field extensions over $k$.
 
 If $k=\rr$ is real, then there is only one $p$-class, for odd $p$, and $2^{{\frak c}}$ of such classes, for $p=2$, where ${\frak c}=$continuum. Indeed,
 there is a continuum of choices of the $j$-invariant for real curves of genus one without real points. If there is a correspondence of odd degree between two such curves, as there are no real points, the respective
 elliptic curves must be isogenous, and there are only countably many
 elliptic curves isogenous to a given one. Thus we have a set of real points free curves of genus one $\{X_i|i\in I\}$ parametrized by a set $I$ of continuum
cardinality, such that there are no correspondences of odd degree between $X_i$ and $X_j$, for $i\neq j$, and the respective elliptic curves are not isogenous. Moreover, if $Y$ is a product of finitely many $X_i$, where
all $i$ are different from a given $j$, then there is no odd degree correspondence from $Y$ to $X_j$.
Indeed, such a correspondence would give a point of odd degree on $(X_j)_{k(Y)}$, but this variety has also points
of degree $2$ (as we have complex points). Since it is a curve of genus one, by the (classical) Riemann-Roch theorem,
it has a rational point then, that is, there is a rational map $Y\row X_j$, which must be dominant, since $X_j$ has no real points. Then the
respective map of Albanese varieties would have been dominant as well, which would mean that the map of one of the components $Alb(X_i)\row Alb(X_j)$ would be non-zero, which is impossible, since these elliptic curves are not isogenous. For any $J\subset I$, let us introduce the extension
$E_J$, which is the composite of $k(X_j)$, for all $j\in J$. The above considerations show that, for different $J$s, these extensions are not $2$-equivalent.
Hence, we get, at least, $2^{{\frak c}}$ different isotropic points of characteristic $2$. On the other hand, the category $DM(\rr)$ has a continuum of compact objects,
so the cardinality of $\op{Spc}(DM(\rr)^c)$ is no more than $2^{{\frak c}}$. Thus, the number of $2$-equivalence classes of extensions $E/\rr$ is $2^{{\frak c}}$.
Note, that the cardinality of the set of $2$-equivalence classes of {\it finitely generated} extensions $E/\rr$ is only ${\frak c}$. On our way, we established that the cardinality
of  $\op{Spc}(DM(\rr)^c)$ is exactly $2^{{\frak c}}$.
\end{example}

An extension of fields $l\row k$ defines the map of Balmer spectra $\op{Spc}(DM(k)^c)\row\op{Spc}(DM(l)^c)$ mapping the 
point ${\frak{a}}_{p,E/k}$ to ${\frak{a}}_{p.E/l}$. 

\begin{example}
 \label{exa-dif-alg-closed}
 Let $k$ be an arbitrary field (of characteristic zero) and
 $\ov{E}/k$ and $\ov{F}/k$ be two algebraically closed field
 extensions. Then $\ov{E}/k\stackrel{p}{\sim}\ov{F}/k$ and so,
 the points ${\frak{a}}_{p,\ov{E}}$ and ${\frak{a}}_{p,\ov{F}}$
 coincide. Thus, we have the ``algebraically closed isotropic point'' (for every $p$) combining all such extensions. The respective $p$-equivalence class contains, in particular, all extensions $E$ containing $\ov{k}$. Usually, it doesn't contain any finitely-generated representatives. It is the highest class in the sense of the order $\stackrel{p}{\geq}$.
\end{example}

\begin{remark}
 One can show that, at lest, for $p=2$, the points ${\frak a}_{p,E}$ and
 ${\frak a}_{p,F}$, for non-$p$-equivalent finitely generated extensions $E/k$ and $F/k$ of a flexible field $k$, are not comparable in the sense of topology of 
 $\op{Spc}(DM(k)^c)$ (and the same should hold for other primes). Thus,
 the partial order $E/k\stackrel{p}{\geq}F/k$ provides a finer structure
 on the Balmer spectrum, a structure not detected by the topological specialisation of points.
 \Red
\end{remark}

\subsection{Balmer spectrum of Morel-Voevodsky category}
\label{sect-BsMV}

Let $SH(k)$ be the $\aaa^1$-stable homotopy category of Morel-Voevodsky \cite{MV}.
Theorem \ref{Iso-no-zd} permits to construct many new points of 
$\op{Spc}(SH(k)^c)$. This is done in our paper with Du \cite{BsMV}. Here I will just list the main results.

Let $p$ be a prime, $n$ be a natural number, or $\infty$, and $K(p,n)=K(n)$
be the Morava K-theory. Let $Q_{p,n}$ be the disjoint union of all
$K(n)$-anisotropic varieties over $k$, and ${\frak X}_{(p,n)}$
be the $\Sigma^{\infty}_{\pp^1}$-spectrum of the respective \v{C}ech simplicial scheme $\check{C}ech(Q_{p,n})$. It is a $\wedge$-projector, and the complementary projector is $\Upsilon_{(p,n)}=\op{Cone}({\frak X}_{(p,n)}\row{\mathbbm{1}})$. Denote as
$\widehat{SH}_{(p,n)}(k/k)$ the category
$\Upsilon_{(p,n)}\wedge SH(k)$. It is naturally a full subcategory of 
$SH(k)$ equivalent to the Verdier localisation of $SH(k)$ by the localising subcategory generated by $K(n)$-anisotropic varieties. 
The same projector can be applied to the category of modules over any $E_{\infty}$-spectrum. Applying it to
$MGL$-modules we get 
the category $\widehat{MGL}_{(p,n)}-mod$ together with the natural functor 
$$
\widehat{M}^{MGL}:\widehat{SH}_{(p,n)}(k/k)\row\widehat{MGL}_{(p,n)}-mod.
$$
Define the
{\it $K(p,n)$-isotropic stable homotopy category} 
${SH}_{(p,n)}(k/k)$ as the Verdier localisation of $\widehat{SH}_{(p,n)}(k/k)$ by the localising subcategory generated by compact
objects $U$, such that the action of $v_n$ on $\widehat{M}^{MGL}(U)$ is nilpotent, where we set $v_{\infty}=1$. 

We can introduce the partial $K(p,n)$-order on the set of extensions of $k$. Let $E=\operatorname{colim}_{\alpha}E_{\alpha}$ and $F=\operatorname{colim}_{\beta}F_{\beta}$, where $E_{\alpha}=k(Q_{\alpha})$ and $F_{\beta}=k(P_{\beta})$
are finitely-generated extensions with smooth models $Q_{\alpha}$
and $P_{\beta}$.
We say that 
$E/k\stackrel{(p,n)}{\geq}F/k$, if for every $\beta$, there exists $\alpha$, such that $P_{\beta}|_{E_{\alpha}}$ is $K(p,n)$-isotropic ($\Leftrightarrow$
$\pi_*:K(p,n)(Q_{\alpha}\times P_{\beta})\twoheadrightarrow K(p,n)(Q_{\alpha})$ is surjective).
Then $E/k\stackrel{(p,n)}{\sim}F/k$ if $E/k\stackrel{(p,n)}{\geq}F/k$
and $F/k\stackrel{(p,n)}{\geq}E/k$.

We get a family of {\it isotropic realisations}
$$
\psi_{(p,n),E}:SH(k)\row SH_{(p,n)}(\wt{E}/\wt{E}),
$$
where $p$ is a prime number, $1\leq n\leq\infty$, and $E/k$ runs over
equivalence classes of the field extensions under the relation $\stackrel{(p,n)}{\sim}$ above.

For $n=\infty$, these are realisations closely related to $\psi_{p,E}$ of $DM(k)^c$ from the previous section. For not formally-real fields, these coincide with the categories studied by Tanania in \cite{ISHG} and \cite{Tan-COIMC}. 

Our Main Theorem \ref{thm-general} implies that these realisations provide points of the
Balmer spectrum of $SH(k)^c$.
Define ${\frak a}_{(p,n),E}:=\kker(\psi_{(p,n),E})$.

\begin{theorem} {\rm (\cite{BsMV})}
 \label{SH-points}
 \begin{itemize}
 \item[$(1)$]
 The $\otimes-\triangle$-ideals ${\frak a}_{(p,n),E}$ are prime and so,
 define points of $\op{Spc}(SH(k)^c)$. 
 \item[$(2)$]
 Two points ${\frak a}_{(p,n),E}$ and ${\frak a}_{(q,m),F}$ coincide
 if and only if $p=q$, $n=m$ and $E/k\stackrel{(p,n)}{\sim}F/k$.
 \item[$(3)$] 
 The point ${\frak a}_{(p,\infty),E}$ is the image of the point
 ${\frak a}_{p,E}$ under the natural map 
 $\op{Spc}(DM(k)^c)\row\op{Spc}(SH(k)^c)$.
 \end{itemize}
\end{theorem}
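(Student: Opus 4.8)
The plan is to carry over to the $SH$-setting the chain of arguments that, for the Voevodsky category, leads from Corollary \ref{is-Chow-num-Chow} to Theorem \ref{a-p-E-prime}, with the Morava-theoretic inputs Theorem \ref{Main-Morava} and Theorem \ref{Iso-no-zd} now playing the role of Corollary \ref{is-Chow-num-Chow}. For part $(1)$, the first step is to equip the compact part $SH_{(p,n)}(k/k)^c$ of the isotropic stable homotopy category over a flexible $k$ with a bounded non-degenerate weight structure: one checks, exactly as in Proposition \ref{ws-iso-mot}, that Bondarko's Chow weight structure on $SH(k)^c$ descends along the $\wedge$-projector $\Upsilon_{(p,n)}$ and along the subsequent Verdier localization by the compact objects on which $v_n$ acts nilpotently, both operations being compatible with the weight filtration. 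The heart should then be identified with the category $Chow^{K(n)}_{iso}(k)$ of isotropic $K(n)$-Chow motives --- the point being that, after applying $\widehat{M}^{MGL}$ and inverting $v_n$, the $MGL$-motive of a smooth projective variety is replaced by its $K(n)$-motive, so the heart is built from $K(n)$-correspondences modulo anisotropic ones. By Theorem \ref{Main-Morava} this heart coincides with $Chow^{K(n)}_{Num}(k)$, which has no $\otimes$-zero-divisors on objects by Theorem \ref{Iso-no-zd}$(2)$ and is semisimple (by an argument parallel to Lemma \ref{Cho-num-sem-sim}, using the topological Morava K-theory realization over $\cc$). Hence, as in Proposition \ref{wt-zero-dif}, Corollary \ref{wt-coh} and Theorem \ref{Kunneth}, every object admits a weight filtration with vanishing differentials, producing a conservative total weight cohomology functor $SH_{(p,n)}(k/k)^c\row Chow^{K(n)}_{iso}(k)[t,t^{-1}]$ which respects $\otimes$; since its target has no zero-divisors, $(0)$ is a prime $\otimes-\triangle$-ideal of $SH_{(p,n)}(\wt{E}/\wt{E})^c$, and therefore ${\frak a}_{(p,n),E}=\psi_{(p,n),E}^{-1}(0)$, being the preimage of $(0)$ under a tensor-triangulated functor, is prime in $SH(k)^c$.

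For part $(2)$, both $p$ and $n$ are invariants of the point: the characteristic of ${\frak a}_{(p,n),E}$ equals $p$ (detected on Moore spectra, as in the proof of Theorem \ref{a-p-E-prime}), while $n$ is recovered from the class of varieties sent to zero, since by Proposition \ref{anis-Pm-Km-IX} the motive of $X$ is annihilated exactly when $X$ is $K(n)$-anisotropic, i.e.\ $\sqrt{I(X)}=I(\ell)$ for some $\ell\leq n$, and this datum separates distinct values of $n$. With $(p,n)$ fixed, the equivalence is a verbatim transcription of the proof of Theorem \ref{a-p-E-prime}: if $E/k\stackrel{(p,n)}{\sim}F/k$, then any compact $U\in{\frak a}_{(p,n),E}$ becomes, over a suitable finitely generated subextension, an extension of shifts of motives of $K(n)$-anisotropic varieties and of objects with $v_n$-nilpotent $MGL$-motive, and both properties survive base change to $\wt{F}$ because $K(n)$-anisotropy (Proposition \ref{anis-Pm-Km-IX}) and $v_n$-nilpotence are preserved by $(p,n)$-equivalence; hence $U\in{\frak a}_{(p,n),F}$, and symmetry gives equality. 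Conversely, if the extensions are not $(p,n)$-equivalent, some $Q_\alpha|_F$ is $K(n)$-anisotropic while $Q_\alpha|_E$ is not, so $\Sigma^{\infty}_{\pp^1}Q_\alpha$ lies in ${\frak a}_{(p,n),F}$ but not in ${\frak a}_{(p,n),E}$ (a unit summand of it survives in $SH(\wt{E})$), so the points differ.

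For part $(3)$, the map $\op{Spc}(DM(k)^c)\row\op{Spc}(SH(k)^c)$ is $\op{Spc}$ applied to the motivic functor $M:SH(k)\row DM(k)$, sending $\mathfrak{a}$ to $M^{-1}(\mathfrak{a})$. For $n=\infty$ one has $K(\infty)^*=\Ch^*$, so $K(\infty)$-anisotropy is ordinary $p$-anisotropy, and since $v_\infty=1$ the localization by the ``$v_\infty$-nilpotent'' objects kills precisely the $U$ with $\widehat{M}^{MGL}(U)=0$. The plan is to verify that $M$ takes such $U$ to $0$ in $DM(\wt{E}/\wt{E};\ff_p)$, so that $M$ induces a tensor-triangulated functor $SH_{(p,\infty)}(\wt{E}/\wt{E})\row DM(\wt{E}/\wt{E};\ff_p)$ fitting into a commutative square with $\psi_{(p,\infty),E}$, $\psi_{p,E}$ and the two motivic functors; taking preimages of $(0)$ around this square yields $M^{-1}({\frak a}_{p,E})={\frak a}_{(p,\infty),E}$, as asserted.

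The main obstacle is entirely in part $(1)$: building the weight structure on the isotropic stable homotopy category, identifying its heart with $Chow^{K(n)}_{iso}(k)$, and --- most delicately --- checking that conservativity of the total weight cohomology functor is not destroyed by the Verdier localization along the $v_n$-nilpotent objects. This is where Theorem \ref{Main-Morava} and Theorem \ref{Iso-no-zd} are used in an essential way; once this is established, parts $(2)$ and $(3)$ follow by faithfully adapting the motivic arguments of Theorem \ref{a-p-E-prime}.
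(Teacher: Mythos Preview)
The paper does not contain a proof of this theorem: it is explicitly stated as a result of \cite{BsMV}, with the present paper supplying only the key input (Theorem \ref{Main-Morava} and Theorem \ref{Iso-no-zd}). So there is no ``paper's own proof'' to compare against; your proposal is a sketch of how the argument in \cite{BsMV} might run, modelled on the $DM$-case of Theorem \ref{a-p-E-prime}.

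That said, your outline has a genuine structural gap in part $(1)$. You write that ``Bondarko's Chow weight structure on $SH(k)^c$ descends\ldots'' and that the heart ``should then be identified with $Chow^{K(n)}_{iso}(k)$''. But unlike $DM(k)^c$, the category $SH(k)^c$ does not carry a Chow weight structure whose heart is a category of correspondences: the morphisms between $\Sigma^{\infty}_{\pp^1}X_+$ and $\Sigma^{\infty}_{\pp^1}Y_+$ in $SH(k)$ are stable homotopy classes, not cycle groups. So the step ``heart $=$ isotropic $K(n)$-Chow motives'' is not a formality --- it is the heart of the matter, and it cannot be obtained ``exactly as in Proposition \ref{ws-iso-mot}''. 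One expects the actual argument in \cite{BsMV} to route through $MGL$-modules (where one \emph{does} have a cycle-theoretic description of morphisms) and to use some form of nilpotence or conservativity to transfer primality back to $SH$; your sketch gestures at $\widehat{M}^{MGL}$ but does not explain why information about the $MGL$-motivic heart controls zero-divisors in $SH_{(p,n)}(k/k)^c$ itself.

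Your treatments of $(2)$ and $(3)$ are reasonable adaptations of Theorem \ref{a-p-E-prime} and are likely close to what \cite{BsMV} does, modulo one caveat in $(2)$: to separate different values of $n$ you need an object that lies in ${\frak a}_{(p,n),E}$ but not in ${\frak a}_{(p,n+1),E}$, and Proposition \ref{anis-Pm-Km-IX} alone does not produce such a variety over an arbitrary $E$ --- you also need to exhibit, for each $n$, a variety with $\sqrt{I(X)}=I(n+1)$ (or to use the $v_n$-nilpotence part of the localization instead). This is not hard, but it is an extra step your sketch omits.
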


These points complement the classical topological points (of positive characteristic).

\bigskip

\begin{itemize}
\item[address:] {\small School of Mathematical Sciences, University of Nottingham, University Park, Nottingham, NG7 2RD, UK}
\item[email:] {\small\ttfamily alexander.vishik@nottingham.ac.uk}
\end{itemize}

\end{document}